\newtheorem{theorem}{Theorem}[section]
\newtheorem{lemma}[theorem]{Lemma}
\newtheorem{proposition}{Proposition}
\theoremstyle{definition}
\newtheorem{remark}{Remark}
\renewcommand{\geq}{\geqslant}
\renewcommand{\leq}{\leqslant}
\newcommand{\cadlag}{c\`adl\'ag}
\title[Discrete limit and monotonicity properties]
      {Discrete limit and monotonicity properties of the  Floquet eigenvalue in an age structured  cell division cycle model}
\author[St\'ephane Gaubert and Thomas Lepoutre]{}
\subjclass[2010]{35B40, 35Q92, 35P15}
 \keywords{cell cycle, circadian rhythms, structured PDEs, Perron-Frobenius theory, Floquet eigenvalue, delay differential equations.}
 \email{stephane.gaubert@inria.fr}
 \email{thomas.lepoutre@inria.fr}
\begin{document}
\maketitle
\centerline{\scshape St\'ephane Gaubert }
\medskip
{\footnotesize
\centerline{INRIA Saclay \^Ile-de-France and CMAP, \'Ecole Polytechnique}
\centerline{Postal address: CMAP, \'Ecole Polytechnique, 91128 Palaiseau cedex, France}} 

\medskip

\centerline{\scshape Thomas  Lepoutre}
\medskip
{\footnotesize
 \centerline{INRIA Rh\^one Alpes (team DRACULA)}
\centerline{
Batiment CEI-1,
66 Boulevard NIELS BOHR, 69603 Villeurbanne cedex, France }
      \centerline{Universit\'e de Lyon
CNRS UMR 5208}
      \centerline{
Universit\'e Lyon 1
Institut Camille Jordan}
      \centerline{
43 blvd. du 11 novembre 1918
F-69622 Villeurbanne cedex
France}
}

\bigskip

%
\begin{abstract}
We consider a cell population described by an age-structured 
partial differential equation with time periodic coefficients.
We assume that 
division only occurs after a minimal
age (majority) and within certain time intervals. 
We study the asymptotic behavior of the dominant
Floquet eigenvalue, or Perron-Frobenius eigenvalue, 
representing the growth rate, as a function of the
majority age, when the division rate tends to infinity (divisions
become instantaneous). We show
that the dominant Floquet eigenvalue converges to a staircase
function with an infinite number of steps,
determined by a discrete dynamical system. 
As an intermediate result, we give a structural condition
which guarantees that 
the dominant Floquet eigenvalue is a nondecreasing function
of the division rate. We also give a counter example
showing that the latter monotonicity property does
not hold in general.

\end{abstract}
\newcommand{\supp}{\operatorname{supp}}
\newcommand{\scaling}{\kappa}
\newcommand{\floor}[1]{\lfloor #1\rfloor}
\newcommand{\LL}{\mathcal{L}}
\newcommand{\M}{\mathcal{M}}
\newcommand{\N}{\mathcal{N}}
\newcommand {\myproof} {\noindent {\bf Proof}. }
\newcommand{\fer}[1]{(\ref{#1})}
\newcommand {\f}   {\frac}
\newcommand {\p}   {\partial}
\newcommand{\R}{\mathbb{R}}
\def\myexp#1{\exp\big(#1\big)}
\newcommand{\dis}{\displaystyle}

\newcommand{\beq}{\begin{equation}}
\newcommand{\eeq}{\end{equation}}
\newcommand{\bea} {\begin{array}{rl}}
\newcommand{\eea} {\end{array}}
\newcommand{\bepa}{\left\{ \begin{array}{l}}
\newcommand{\eepa} {\end{array}\right.}
\def\bbbone{{\mathchoice {\rm 1\mskip-4mu l} {\rm 1\mskip-4mu l}{\rm 1\mskip-4.5mu l} {\rm 1\mskip-5mu l}}}

\vspace*{1cm}



%
%
%
%
%
%
\section{Introduction}
\subsection{Age structured model of the cell division cycle}
Among all the population  models, age-structured population models are perhaps the simplest and 

oldest ones taking into account the variability between individuals. Age structured partial differential equations have been introduced in epidemiology through the famous Von Foerster - Mac Kendrick model~\cite{McKendrick} (where age stands for age in the disease). In its simplest version, it describes the dynamics of a population represented by a density $n(t,x)$ of individuals of age between $x$ and $x+dx$ through age-dependent birth and death rate:

\begin{equation}
\left\lbrace\begin{array}{l}\label{eq:renewal}
\dfrac{\partial n}{\partial t}+\dfrac{\partial n}{\partial x} +d(x)n(t,x) =0,\\[0.3cm]
n(t,0)=\displaystyle\int_0^\infty B(x) n(t,x) dx,\\[0.3cm]
n(0,x)=n^0(x) \quad given. 
\end{array}\right.
\end{equation}

A special case of this model can be used to represent cell division, taking $d(x)=K(x)$ and $B(x)=2K(x)$. 
More complex models, in terms of structure, representing the cell division cycle (linear or not, size or  cyclin  structured) have been developped and studied in \cite{MetzDiekmann,Doumic,Webb_book}.  
In this paper, we study an extension of the age-structured division equation, in which the coefficients are periodic functions of time. There are several motivations to introduce time dependence in \eqref{eq:renewal}. The most classical one is the representation of seasonality  (see \cite{Bacaer} for instance). Concerning cell division, our  main motivation comes from cancer therapy such as resonance therapy \cite{Dibrov} or chronotherapy \cite{Levi}. The latter is based on the use of circadian 
rhythms and motivates especially the introduction of time periodic coefficients (see \cite{CMP2, Goldbeterbook} for instance). We also refer to \cite{Diekmann_periodic,Magal_periodic,Thieme_84}  for work on renewal models with periodic coefficients.

In our case, the periodic division equation reads 
\begin{equation}
\left\lbrace\begin{array}{l}\label{eq:periodic_division}
\dfrac{\partial n}{\partial t}+\dfrac{\partial n}{\partial x} +K(t,x)n(t,x) =0,\\[0.3cm]
n(t,0)=2\displaystyle\int_0^\infty K(t,x) n(t,x) dx,\\[0.3cm]
n(0,x)=n^0(x) \quad given. 
\end{array}\right.
\end{equation}
where $K$ satisfies $K(t+T,\cdot)=K(t,\cdot)$ for some $T>0$.

For equations \eqref{eq:renewal},\eqref{eq:periodic_division}, the population grows exponentially with time (see \cite{PerthameBook} for instance) and the growth rate is measured by the so called Malthus parameter $\lambda$ ($n(t,x)\sim \rho e^{\lambda t}N(t,x)$, $N$ being given by \eqref{eq:eigendirect}). Particularly, in \eqref{eq:periodic_division}, the Malthus parameter $\lambda_F $ (F for Floquet) can be generally described by means of a Floquet eigenvalue problem.
\begin{equation}
\left\lbrace\begin{array}{l}\label{eq:eigendirect}
\dfrac{\partial N}{\partial t}+\dfrac{\partial N}{\partial x} +(\lambda_F+K(t,x))N(t,x) =0,\\[0.3cm]
n(t,0)=2\displaystyle\int_0^\infty K(t,x) N(t,x) dx,\\[0.3cm]
N(t+T,x)=N(t,x),\; N>0. 
\end{array}\right.
\end{equation}
together with the dual eigenproblem
\begin{equation}
\left\lbrace\begin{array}{l}\label{eq:eigendual}
-\dfrac{\partial \phi}{\partial t}-\dfrac{\partial \phi}{\partial x} +(\lambda_F+K(t,x))\phi(t,x) =2K(t,x)\phi(t,0),\\[0.3cm]
\phi(t+T,x)=\phi(t,x),\; \phi>0. 
\end{array}\right.
\end{equation}
For uniqueness issues, these equations are usually completed with the following normalization:
\beq\label{eq:normalisation}
\frac{1}{T}\int_0^T\int_0^\infty N dx dt =\frac{1}{T}\int_0^T\int_0^\infty N(t,x)\phi(t,x)dxdt=1. 
\eeq
Actually, the integral $\int_0^\infty N(t,x)\phi(t,x)dx$ does not depend on $t$. 
Various properties of the Floquet eigenvalue have been studied in \cite{Bacaer,Lepoutre_mmnp,Lepoutre_MCM,CGP,CMP} with in mind the comparison with time independent systems (in which $K(t,x)$  is replaced by  suitable time averages). Here, we are interested in properties of the eigenvalue (and more generally of the dynamics) that are intrinsically related to the fact that the coefficients are time-dependent. 

We consider in Section~\ref{sec:asymp} a division rate of the form
\begin{align}
\label{e-scaling} K(t,x)=\scaling\psi(t)B(x)\bbbone_{[a,+\infty[}(x)
\enspace .
\end{align}
Here, $\scaling>0$ is a scaling parameter and the function $\psi(t)\geq 0$ expresses the dependence of the division rate as a function of time.
We denote by $\bbbone$ the indicator function of a set. The term
$B(x)\bbbone_{[a,+\infty[}(x)$ indicates that the cells only divide
when $x\geq a$, i.e., when they are older than a ``majority age''
$a$. The function $B(x)$ expresses a general modulation of the
division rate as a function of the age. 

The function $B$ will be required
to be bounded, positive, of infinite integral,
\begin{equation}\label{as:B}
B\in L^\infty (\mathbb{R}^+),\quad B>0,\quad \int_{\mathbb{R}^+} B=+\infty
\end{equation}
This is
needed for  the existence theory of the Floquet eigenvalue (see the appendix).

\subsection{Main results}
Our goal is to derive the asymptotics 
of the growth rate when the division becomes instantaneous, meaning
that the scaling parameter $\scaling$ in~\eqref{e-scaling} tends to $\infty$. 
The growth rate is represented by the Floquet eigenvalue, 
$\lambda_F$, which is the unique scalar $\lambda$ solution of~\eqref{eq:eigendirect},\eqref{eq:eigendual}. Hence, we shall investigate the asymptotic behavior of $\lambda_F$ as a function of $\scaling$.

To this end, we first study the monotonicity of the Floquet eigenvalue with respect to the division coefficient $K(t,x)$. Since every division replaces
one old cell by two new ones, it is natural to think that the growth rate
is a nondecreasing function of $K(t,x)$. Our first
result is an explicit counter example, showing that this apparently intuitive property is not valid due to the time and age dependence of the division rate.
\begin{proposition}\label{theo-cex}
There exists configurations such that
$$\forall (t,x),\qquad 0\leq K^1(t,x)\leq K^2(t,x),$$
and the corresponding Floquet eigenvalues satisfy 
$$
\lambda_F^1>\lambda_F^2.
$$
\end{proposition}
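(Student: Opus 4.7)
Plan. The natural route is to reduce the failure of monotonicity to a first-order infinitesimal criterion. Writing $K^2=K^1+\varepsilon\,\delta K$ with $\delta K\ge 0$ and differentiating the direct/dual Floquet system~\eqref{eq:eigendirect}--\eqref{eq:eigendual} at $\varepsilon=0$, I multiply the linearised direct equation by $\phi$, integrate over $[0,T]\times\R^+$, and use the dual equation together with the periodicity and boundary conditions to cancel all boundary terms. This yields, after normalising via~\eqref{eq:normalisation}, the first-order formula
$$T\,\frac{d\lambda_F}{d\varepsilon}\bigg|_{\varepsilon=0}\;=\;\int_0^T\!\!\int_0^{\infty}\delta K(t,x)\bigl(2\phi(t,0)-\phi(t,x)\bigr)N(t,x)\,dx\,dt,$$
with $(N,\phi)$ the Floquet pair of $K^1$. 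Consequently $\lambda_F$ strictly decreases under a small nonnegative perturbation supported where $\phi(t,x)>2\phi(t,0)$; the construction of a counterexample reduces to exhibiting a base rate $K^1$ for which the set $\mathcal{A}:=\{(t,x):\phi(t,x)>2\phi(t,0)\}$ has positive Lebesgue measure.

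In the time-independent analogue such a set is empty. Indeed for $K=\psi B(x)\bbbone_{[a,+\infty[}(x)$ with $\psi$ constant one finds by direct integration $\phi(x)=\phi(0)e^{\lambda_F a}$ on $[a,+\infty[$, while the Euler--Lotka relation $\lambda_F+\psi=2\psi e^{-\lambda_F a}$ forces $e^{\lambda_F a}<2$, so $\phi(x)<2\phi(0)$ and $\lambda_F$ is monotone in $K$. The periodic setting is qualitatively different through the following mechanism: interpreting $\phi(t,x)$ as the expected discounted reproductive value of a cell, a mature cell ($x\ge a$) just before a favourable division pulse is worth approximately $2\phi(t',0)e^{-\lambda_F(t'-t)}$, where $t'$ is the next division time, whereas the corresponding newborn is discounted along a longer orbit since it must first age to $a$ and can miss the intermediate pulse. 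When the delay $\Delta$ between the two next-division times satisfies $\lambda_F\Delta>\log 2$, the ratio $\phi(t,x)/\phi(t,0)$ strictly exceeds $2$.

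To realise this scenario I would choose a time profile $\psi$ with two unequally spaced narrow division pulses per period together with a majority age $a$ calibrated so that a newborn produced near the ``bad'' pulse misses the next ``good'' pulse and must wait essentially an additional full period, while mature cells catch the nearby pulse. The main obstacle is to upgrade the resulting inequality from the borderline equality $\phi(t,x)=2\phi(t,0)$, which is forced in the purely scale-invariant transport limit $\scaling\to\infty$, to the strict inequality needed to open $\mathcal{A}$. This is done either by a nontrivial age profile $B$ that breaks the transport scale-invariance, or by working at finite $\scaling$ where the Euler--Lotka characterisation becomes strict. Once $\mathcal{A}$ is shown to contain a neighbourhood of some $(t^*,x^*)$, taking $\delta K=\eta\,\bbbone_S$ for $S\subset\mathcal{A}$ a small open set and $\eta>0$ small produces the pair $K^1\le K^2$ with $\lambda_F^1>\lambda_F^2$, which is the desired counterexample.
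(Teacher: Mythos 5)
Your first-order variational formula is correct: writing $K^2=K^1+\varepsilon\,\delta K$ and pairing the linearised direct equation against the dual eigenfunction $\phi$ does yield, after cancellation of boundary terms and normalisation,
$$T\,\frac{d\lambda_F}{d\varepsilon}\Big|_{\varepsilon=0}=\int_0^T\!\!\int_0^\infty \delta K\,\bigl(2\phi(t,0)-\phi(t,x)\bigr)N\,dx\,dt,$$
so a nonnegative bump $\delta K$ supported in $\mathcal{A}=\{\phi(t,x)>2\phi(t,0)\}$ would indeed force $\lambda_F$ to decrease. But everything rests on showing that $\mathcal{A}$ has positive measure for some admissible base rate $K^1$, and this is precisely where the proposal stops: the two-pulse scenario is described heuristically, you acknowledge that in the scale-invariant regime the inequality is only borderline, and you offer two possible ways to break the degeneracy (a decreasing age profile $B$, or finite $\scaling$) without actually verifying either. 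The probabilistic reading of $\phi$ as a discounted reproductive value is a good heuristic, but it does not by itself establish the strict inequality $\phi(t,x)>2\phi(t,0)$ on a set of positive measure, and without that the proposition is not proved. Note also a structural constraint you should keep in mind: if you take $K^1$ in the separated form $\psi(t)B(x)\bbbone_{[a,\infty)}(x)$ with $B$ nondecreasing, then $K^1(t,\cdot)$ is nondecreasing in $x$, hence $K^1$ satisfies condition~\eqref{eq:K_condition} and Theorem~\ref{th-comparison} forces $\mathcal{A}$ to be empty; any counterexample must genuinely escape that hypothesis.

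The paper sidesteps the existence problem entirely by building a division rate that is not of separated type: it depends on the birth time $t-x$ through $K(t,x)=\chi_1(t-x)K_1(t)+\chi_2(t-x)K_2(t)$ with piecewise constant $K_1,K_2$, so the system collapses to a $2\times2$ ODE for the aggregated populations of "day-born" and "night-born" cells, and the Floquet eigenvalue is the log-spectral-radius of an explicit product of matrix exponentials. In the degenerate sub-case $a_2=b_2=0$ this gives $\lambda_F=\max(\alpha a_1-(1-\alpha)b_1,0)$, which is visibly decreasing in $b_1$, and a continuity argument extends the counterexample to small positive $a_2,b_2$. Your variational criterion is a legitimate and potentially more flexible route (it would detect non-monotonicity wherever it occurs, not just in exactly solvable models), but as submitted it is an unfinished plan rather than a proof: the key existence claim about $\mathcal{A}$ is asserted, not established.
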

However, the following theorem identifies a subclass of transition
rates for which the Floquet eigenvalue is indeed a monotone function
of the transition rate.
\begin{theorem}[Comparison principle]\label{th-comparison}
If $K^1$ satisfies 
$$
v\mapsto \int_v^tK^1(s,s-v)ds \qquad\text{is nondecreasing for any }t,
$$
then for all $K^2\geq 0$,
\begin{eqnarray*}
\left(\forall(t,x)\quad K^2(t,x)\geq K^1(t,x)\right)&\Rightarrow &\lambda_F^2\geq \lambda_F^1,\\
\left(\forall(t,x) \quad K^2(t,x)\leq K^1(t,x)\right)&\Rightarrow &\lambda_F^2\leq \lambda_F^1.
\end{eqnarray*}
\end{theorem}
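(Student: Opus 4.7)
My plan is the classical duality (adjoint) method. Writing $(N^i, \phi^i, \lambda^i)$ for the direct/dual Floquet data of $K^i$, I would multiply the equation \eqref{eq:eigendirect} for $N^2$ by $\phi^1$, integrate on $[0, T] \times \mathbb{R}^+$, and perform integration by parts in $t$ (using $T$-periodicity) and in $x$ (using decay at infinity and the boundary condition at $x = 0$). Substituting the dual equation \eqref{eq:eigendual} for $\phi^1$ and the renewal boundary condition $N^2(t, 0) = 2 \int K^2 N^2\, dx$ yields the adjoint identity
\begin{equation*}
(\lambda^2 - \lambda^1) \!\int_0^T\!\!\int_0^\infty\! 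N^2 \phi^1 \, dx\, dt = \int_0^T\!\!\int_0^\infty\! (K^2 - K^1)\,\big[2\phi^1(t, 0) - \phi^1(t, x)\big]\, N^2(t, x) \, dx\, dt.
\end{equation*}
By \eqref{eq:normalisation} the prefactor on the left is strictly positive, so the sign of $\lambda^2 - \lambda^1$ is dictated by that of the right-hand integrand. Both implications of the theorem follow at once from the single pointwise estimate
\begin{equation*}
\phi^1(t, x) \leq 2 \phi^1(t, 0) \qquad \text{for all } (t, x).
\end{equation*}

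Next I would establish this estimate. Setting $f(t, x) := 2 \phi^1(t, 0) - \phi^1(t, x)$, a short computation using the dual equation at $(t, x)$ shows that
\begin{equation*}
-\partial_t f - \partial_x f + (\lambda^1 + K^1(t, x))\, f = 2\,\big(\lambda^1 \phi^1(t, 0) - \partial_t \phi^1(t, 0)\big),
\end{equation*}
with $f$ $T$-periodic in $t$ and $f(t, 0) = \phi^1(t, 0) > 0$. The decisive feature is that the source on the right depends only on $t$. Solving along forward characteristics, with the integrating factor vanishing at infinity by \eqref{as:B}, produces the Duhamel representation
\begin{equation*}
f(t, x) = 2 \int_0^\infty\! \big(\lambda^1 \phi^1(t+s, 0) - \partial_t \phi^1(t+s, 0)\big)\, e^{-\lambda^1 s - \int_0^s K^1(t+u, x+u)\, du}\, ds.
\end{equation*}

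Finally I would bring in the hypothesis. Re-parametrising by $\tau = t + s$ and $v = t - x$ rewrites the exponent as $-\lambda^1(\tau - t) - \int_v^\tau K^1(\sigma, \sigma - v)\, d\sigma + \int_v^t K^1(\sigma, \sigma - v)\, d\sigma$, i.e.\ precisely the quantity whose monotonicity in $v$ is assumed. Setting
\begin{equation*}
J(v) := \int_t^\infty \big(\lambda^1 \phi^1 - \partial_\tau \phi^1\big)(\tau, 0)\, e^{-\lambda^1(\tau - t) - \int_v^\tau K^1(\sigma, \sigma - v)\, d\sigma}\, d\tau,
\end{equation*}
one has $f(t, x) = 2 \exp\!\big(\int_v^t K^1(\sigma, \sigma - v)\, d\sigma\big)\, J(v)$, so that the sign of $f$ agrees with that of $J$. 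An integration by parts in $\tau$, combined with the renewal equation for $\phi^1(t, 0)$, yields $J(t) = \phi^1(t, 0)/2 > 0$, while $J(v) \to 0$ as $v \to -\infty$ via \eqref{as:B}. Differentiating $J$ in $v$ brings in $\partial_v \int_v^\tau K^1(\sigma, \sigma - v)\, d\sigma$, whose sign is exactly the one supplied by the hypothesis; this should let one propagate the positivity of $J$ at $v = t$ to all $v \leq t$, giving $J \geq 0$ and hence $f \geq 0$. Combined with the adjoint identity this finishes the proof. I expect this propagation step --- converting the monotonicity of the cumulative hazard $v \mapsto \int_v^t K^1(s, s-v)\, ds$ into the nonnegativity of $J$ throughout $v \leq t$, despite the a priori unknown sign of the source $\lambda^1 \phi^1(\cdot, 0) - \partial_t \phi^1(\cdot, 0)$ --- to be the main technical obstacle.
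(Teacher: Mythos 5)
Your adjoint approach is a genuinely different route from the paper's, which argues instead through a disaggregation of the density into generations $n=\sum_i 2^i n_i$, explicit integral formulas for $S_j(t)=\sum_{i\geq j}\int n_i$, and an integration by parts in the birth variable $v$ establishing that $S_j$ is monotone in $K$ (Lemmas~\ref{lem:Sj} and \ref{lem-majorization}). Your reduction is sound as far as it goes: the adjoint identity
\[
(\lambda^2-\lambda^1)\int_0^T\!\!\int_0^\infty N^2\phi^1 = \int_0^T\!\!\int_0^\infty (K^2-K^1)\,\big[2\phi^1(t,0)-\phi^1(t,x)\big]\,N^2
\]
is correct (modulo the needed decay and existence hypotheses, which the appendix provides), and since $N^2>0$ and $K^2-K^1$ can be concentrated anywhere, both implications of the theorem are indeed equivalent to the single inequality $\phi^1(t,x)\leq 2\phi^1(t,0)$ for all $(t,x)$. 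Your Duhamel computation giving $f=2\phi^1(\cdot,0)-\phi^1$ as a forward-in-time integral of the $x$-independent source $g(\tau)=2(\lambda^1\phi^1(\tau,0)-\partial_\tau\phi^1(\tau,0))$, and the reparametrisation in $(\tau,v)$, are also correct.

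However, the propagation step you defer is not a minor gap — it is the whole theorem, and I do not see how it closes. Write $A(\tau,v)=\int_v^\tau K^1(\sigma,\sigma-v)\,d\sigma$, so that
\[
J(v)=\int_t^\infty \frac{g(\tau)}{2}\,e^{-\lambda^1(\tau-t)}\,e^{-A(\tau,v)}\,d\tau,
\qquad
J'(v)=-\int_t^\infty \frac{g(\tau)}{2}\,\partial_v A(\tau,v)\,e^{-\lambda^1(\tau-t)-A(\tau,v)}\,d\tau .
\]
The hypothesis fixes the sign of $\partial_v A$ (incidentally, note that the theorem's statement says ``nondecreasing'' but the condition actually used, \eqref{eq:K_condition} and Figure~\ref{fig-cond}, is that $A(\tau,v)$ is \emph{nonincreasing} in $v$ — the survival probability increases with birth time; you should work with that sign). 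Even so, $g(\tau)$ is a $T$-periodic function whose sign is genuinely indeterminate — indeed $\int_0^T g(\tau)\,d\tau=2\lambda^1\int_0^T\phi^1(\tau,0)\,d\tau>0$ forces $g$ to be positive on average, but it can and generally does change sign over a period. Thus $J'(v)$ has no definite sign, and the boundary data $J(t)=\phi^1(t,0)/2>0$ and $J(-\infty)=0$ do not, by themselves, force $J\geq 0$ on $(-\infty,t]$. Since $A(\tau,v)$ does not factor into a function of $\tau$ times a function of $v$, one cannot pull a $v$-dependent factor out of the integral to reduce to the known-positive quantity $J(t)$ either. As a sanity check that the difficulty is real and not an artefact of the method: the counter-example of Proposition~\ref{theo-cex} shows that, without the structural hypothesis on $K^1$, the inequality $\phi^1(t,x)\leq 2\phi^1(t,0)$ must actually fail, so this inequality cannot be a free consequence of the dual equation — the hypothesis has to enter in an essential way, and your write-up does not yet show how it does. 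The paper's generational argument sidesteps all of this: it never needs a pointwise bound on $\phi^1$, and the condition \eqref{eq:K_condition} enters cleanly as the nonpositivity of $\tfrac{d}{dv}Q^1(t,v)$ in the integration by parts of Lemma~\ref{lem-majorization}.
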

The proof relies on a disaggregation idea of independent interest.
We first show that the density $n(t,x)$
can be written as $\sum_{i\geq 0} 2^i n_i(t,x)$, where every $n_i(t,x)$ 
can be interpreted as the density of an individual of the $i$th generation, and
the densities $(n_i)_{i\geq 0}$ satisfy an infinite triangular system of transport equations coupled by integral terms. Then, for all $j\geq 0$, we
show that the term $S_j(t)=\sum_{i\geq j} \int_0^\infty n_i(t,x)dx$ is a nondecreasing function of $K\in \{K^1,K^2\}$, and derive from this fact the comparison 
of Floquet eigenvalues. We note that the latter fact 
has a probabilistic interpretation: it means precisely that 
the generation of a single
individual is a nondecreasing function of $K\in \{K^1,K^2\}$
with respect to the stochastic (majorization) order.

When $\inf \psi>0$, the intuition predicts that the growth rate goes to the limit $\log 2/a$ as $\scaling$ tends to infinity, 
because every cell will divide shortly after reaching
age $a$ when the parameter $\scaling$ is large. Our next result 
confirms that this is the case.
\begin{proposition}\label{Th:log2sura}
Suppose that $\psi$ is a bounded $T$-periodic function with $\inf\psi>0$,
and that $B$ is a positive bounded function satisfying~\eqref{as:B}.

Then, for all $\scaling>0$, 
\begin{eqnarray}
0&\leq\lambda_F(\scaling)\leq &\dfrac{\log(2)}{a},\label{eq:encadrement}
\end{eqnarray}
and
\begin{eqnarray}
\lim_{\scaling\to\infty}\lambda_F(\scaling)&=
&\frac{\log(2)}{a}\label{eq:limite}.
\end{eqnarray}
\end{proposition}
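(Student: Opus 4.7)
The plan is to reduce the Floquet eigenvalue problem to a scalar periodic renewal identity and exploit the age threshold $a$.

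First, I would integrate~\eqref{eq:eigendirect} along its characteristics. Writing $\nu(t)=N(t,0)$ and $\Pi(s,u)=\exp\bigl(-\int_0^u K(s+y,y)\,dy\bigr)$ for the conditional survival kernel, one obtains $N(t,x)=e^{-\lambda_F x}\Pi(t-x,x)\nu(t-x)$. Substituting this into the boundary condition of~\eqref{eq:eigendirect} yields, after a change of variables, the Lotka-type equation
\[
\nu(t) \;=\; 2\int_0^\infty e^{-\lambda_F u}\,\rho(t-u,u)\,\nu(t-u)\,du,\qquad \rho(s,u):=K(s+u,u)\Pi(s,u).
\]
Since $\int B=+\infty$ by~\eqref{as:B} and $\inf\psi>0$, the kernel $\rho(s,\cdot)$ is a probability density supported in $[a,+\infty)$.

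The second step is to integrate this identity over $t\in[0,T]$. Swapping the order of integration and exploiting the $T$-periodicity of $\nu$ and of $\rho(\cdot,u)$ gives
\[
\int_0^T \nu(s)\bigl(1-2L(s,\lambda_F)\bigr)\,ds \;=\; 0,\qquad L(s,\lambda):=\int_0^\infty e^{-\lambda u}\rho(s,u)\,du.
\]
Since $\nu>0$, the factor $1-2L(\cdot,\lambda_F)$ must change sign on $[0,T]$. As $L(s,\cdot)$ is continuous and strictly decreases from $L(s,0)=1$ to $0$, one defines $\lambda_\scaling(s)$ as the unique positive root of $L(s,\lambda)=1/2$, and the sign-change property yields the bracketing
\[
\min_{s\in[0,T]}\lambda_\scaling(s) \;\leq\; \lambda_F(\scaling) \;\leq\; \max_{s\in[0,T]}\lambda_\scaling(s).
\]
Because $\rho$ vanishes on $[0,a)$, one has $L(s,\lambda)\leq e^{-\lambda a}$, whence $\lambda_\scaling(s)\leq \log 2/a$; combined with $\lambda_\scaling(s)>0$ this gives~\eqref{eq:encadrement}.

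For~\eqref{eq:limite} it then suffices to prove that $\lambda_\scaling(s)\to \log 2/a$ uniformly in $s$. The key concentration estimate is, for any $\delta>0$,
\[
\Pi_\scaling(s,a+\delta) \;\leq\; \exp\!\Bigl(-\scaling\,(\inf\psi)\!\int_a^{a+\delta}B(y)\,dy\Bigr),
\]
which decays exponentially in $\scaling$ uniformly in $s$, since $\int_a^{a+\delta}B>0$ under~\eqref{as:B}. Hence the measure $\rho_\scaling(s,u)\,du$ concentrates on $[a,a+\delta]$ uniformly in $s$, so $L_\scaling(s,\lambda)\to e^{-\lambda a}$ uniformly in $s$ for $\lambda$ in any bounded set, and inverting the level set $L=1/2$ through the monotonicity of $L(s,\cdot)$ gives $\lambda_\scaling(s)\to \log 2/a$ uniformly. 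I expect the main technical point to be precisely this uniform concentration and inversion; the passage from the renewal identity to the bracketing is purely algebraic.
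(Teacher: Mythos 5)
Your proposal is correct, and it takes a genuinely different route from the paper's. The paper does not work with a Lotka-type renewal identity at all: it invokes the comparison principle (Theorem~\ref{th-comparison}) to bracket $\lambda_F^\kappa(\psi)$ between $\lambda_F^\kappa(\underline\psi)$ and $\lambda_F^\kappa(\overline\psi)$, where $\underline\psi,\overline\psi$ are the constant functions equal to $\inf\psi$ and $\sup\psi$; for constant $\psi=\alpha$ the Floquet problem reduces to a time-independent eigenproblem which can be integrated explicitly, yielding the implicit equation $2\int_a^\infty\kappa\alpha B(x)\exp(-\mu x-\kappa\alpha\int_a^x B)\,dx=1$, from which the bound $\mu\leq\log 2/a$ falls out immediately and the limit follows by an integration by parts plus dominated convergence. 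Your approach instead derives the full time-dependent renewal equation $\nu(t)=2\int_0^\infty e^{-\lambda_F u}\rho(t-u,u)\nu(t-u)\,du$ from the characteristics, averages it over one period using $T$-periodicity, and exploits positivity of $\nu$ to force a sign change of $1-2L(\cdot,\lambda_F)$, obtaining the bracketing $\operatorname{ess\,inf}_s\lambda_\kappa(s)\leq\lambda_F\leq\operatorname{ess\,sup}_s\lambda_\kappa(s)$; the concentration of $\rho_\kappa(s,\cdot)$ near $a$ then drives $\lambda_\kappa(s)\to\log 2/a$ uniformly. Both arguments are sound. The paper's route is shorter because the heavy lifting is delegated to the comparison principle already established for Theorem~\ref{th-comparison}; yours is self-contained (it does not use that theorem) and directly exposes the concentration mechanism, at the price of some extra care with measurability (since $\psi$ is only assumed bounded, one should phrase the sign-change step with essential suprema/infima rather than pointwise extrema). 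One small point worth making explicit: the normalization $\int_0^\infty\rho(s,u)\,du=1$ uses $\int_0^\infty K(s+y,y)\,dy=+\infty$, which here follows from $\inf\psi>0$ together with $\int B=\infty$ — exactly the hypothesis~\eqref{as:B} / condition~\eqref{as:integrale_B=infinity} of the appendix, so this is fine but should be stated.
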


When $\psi$ vanishes, meaning that the division of cells is blocked at certain
times of the day, we shall see that complex synchronization phenomena 
appear. We shall assume that $\psi$ has a square wave shape, i.e., that $\psi$ is a $T$-periodic
function such that, for some $0<\tau< T$,
\begin{equation}\label{as:tauT}
\psi(t)=
\begin{cases}
1 & \text{for }t\in [0,\tau)\\
0 & \text{for }t\in [\tau,T)\\
\end{cases}
.\end{equation}
 
\newcommand{\residu}{b}
The following is the main result of this paper. We denote by $\lceil x\rceil$ the smallest integer which is not less than a real number $x$.
\begin{theorem}[Discrete limit]\label{Th:devil}
Suppose that the division rate is given by 
$K(t,x)=\scaling\psi(t)\bbbone_{[a,\infty)}(x)$,
where $\psi$ is given by (\ref{as:tauT}). Denote $W_\tau=[\tau,T]+\mathbb{N}T$ and let $N_a$ be the integer defined by
\[
N_a a=\min W_\tau\cap \mathbb{N}a \enspace. 
\]
Then, the limit of the Floquet eigenvalue $\lambda^\scaling(a)$ as $\scaling\rightarrow +\infty$ is given by 
\begin{equation}\label{eq:lambda_devil}
\lambda^\infty(a)=\frac{N_a}{\lceil N_a a/T\rceil T}\log 2.
\end{equation} 
Moreover, we have the estimate 
\begin{equation}\label{eq:devil_bounds}
\lambda^\infty(a,\tau)\geq\lambda^\scaling(a,\tau)\geq \lambda^\infty(a)\bigg(1-e^{-\scaling r(a,\tau)}\bigg),
\end{equation}
where the function $r(a,\tau)$ (the convergence rate) satisfies
\beq
r(a,\tau)\geq \min\left(\frac{a_r-a}{2},\tau\right), \quad a_r=\sup\{a',\lambda^\infty(a')=\lambda^\infty(a)\},
\eeq
and $r$ is positive outside a closed countable set.
\end{theorem}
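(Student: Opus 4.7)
Rewrite the eigenproblem \eqref{eq:eigendirect} as a one-dimensional renewal equation. Setting $u(t):=N(t,0)$ and integrating along characteristics yields
\[
u(t)=2\scaling\psi(t)\int_{-\infty}^{t-a} u(\tau)\,e^{-\lambda_F(t-\tau)}\exp\bigl(-\scaling\Psi(\tau+a,t)\bigr)\,d\tau,
\]
where $\Psi(p,q):=\int_p^q\psi$; the Floquet eigenvalue $\lambda_F^\scaling$ is the unique scalar for which this equation admits a positive $T$-periodic solution, which reduces the problem to spectral analysis of a one-parameter family of integral operators.

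As $\scaling\to+\infty$, the kernel $\scaling\psi(t)\exp(-\scaling\Psi(\tau+a,t))$ concentrates as a Dirac mass on the graph of the map $t=\sigma(\tau)$, where $\sigma(\tau)$ denotes the first instant $\geq\tau+a$ at which $\psi>0$ (the greedy division time for a cell born at $\tau$). The formal limit renewal equation is therefore $u^\infty(t)=2u^\infty(\sigma^{-1}(t))e^{-\lambda(t-\sigma^{-1}(t))}$, and it can be solved explicitly by iterating $\sigma$ from $0$: by definition of $N_a$ one has $\sigma^k(0)=ka$ for $0\leq k\leq N_a-1$ and $\sigma^{N_a}(0)=\lceil N_a a/T\rceil T$, after which the orbit closes modulo $T$. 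The $T$-periodicity of $u^\infty$ along this orbit forces $2^{N_a}e^{-\lambda\lceil N_a a/T\rceil T}=1$, which is \eqref{eq:lambda_devil}.

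For the upper bound $\lambda^\scaling\leq\lambda^\infty$, I construct, for each $\scaling$, a positive $T$-periodic super-solution of \eqref{eq:eigendirect} with growth rate $\lambda^\infty(a)$ by regularising the limit profile above; the greedy scheme is extremal since divisions driven by $K=\scaling\psi\bbbone_{[a,\infty)}$ cannot occur faster than instantaneously upon entering a free window at age $\geq a$, and comparison of positive periodic solutions of the Floquet equation yields the bound. For the lower bound I perform a quantitative Laplace analysis of the renewal equation above. A cell reaching age $a$ with remaining length $\delta$ of the current free window divides in time with probability $1-e^{-\scaling\delta}$, so along one period of the ideal orbit each of the $N_a$ successive on-time divisions enjoys a safety margin of at least $\min((a_r-a)/2,\tau)$: the $\tau$-term controls divisions occurring well inside a free window, and the $(a_r-a)/2$-term controls those just before the next waiting step (this is exactly the quantity by which $a$ can be increased while the greedy orbit of $0$ is preserved). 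Producing an $e^{-\scaling r}$-perturbed sub-solution of \eqref{eq:eigendirect} with growth rate $\lambda^\infty(1-e^{-\scaling r(a,\tau)})$ then yields \eqref{eq:devil_bounds}.

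The main difficulty is the explicit and uniform control of the convergence rate $r(a,\tau)$ near the critical values of $a$ at which $N_a$ jumps: there the orbit of the piecewise linear map $\sigma$ loses robustness, the Laplace concentration degenerates, and the safety margin collapses to $0$. Using the piecewise linear structure of $\sigma$ one shows that this degeneration occurs only on a closed countable set (the values for which some iterate $\sigma^k(0)$ lands precisely on the boundary $\tau$ modulo $T$), outside of which the above two-term lower bound for $r$ remains strictly positive.
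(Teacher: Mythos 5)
Your proposal takes a genuinely different route from the paper: you reformulate the Floquet eigenproblem as a one-dimensional renewal equation for $u(t)=N(t,0)$ and analyse the $\scaling\to\infty$ limit as a Dirac concentration on the greedy map $\sigma$, whereas the paper works with the generational decomposition $n=\sum_i 2^i n_i$ and bounds the maximal generation index $I_{\max}(t)$ directly. Your formal derivation of \eqref{eq:lambda_devil} from the closed orbit of $\sigma$ is essentially the same heuristic the paper gives in Section~\ref{sec:derivation}, and your explanation of the two terms $(a_r-a)/2$ and $\tau$ in the rate matches the role of the $\theta_i$-sequence and the tube width $\varepsilon$ in Lemma~\ref{lem:theta_epsilon}.

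However, there are genuine gaps in the argument as written. The upper and lower bounds both rest on a super/sub-solution comparison for the Floquet eigenvalue, and neither construction is actually carried out. The limit profile $u^\infty$ is a finite sum of Dirac masses at the points $\sigma^k(0)\bmod T$, and ``regularising'' it to obtain, for each fixed $\scaling$, a \emph{genuine} positive $T$-periodic super-solution of \eqref{eq:eigendirect} with growth rate exactly $\lambda^\infty(a)$ is a nontrivial step that you simply assert. The paper sidesteps this entirely: for the upper bound it uses the purely combinatorial fact that $I_{\max}(t)\leq N_a$ for $t<p_a+1$ together with mass conservation $\sum_i\int n_i=\int N^\scaling(0,\cdot)$, which immediately gives $e^{\lambda^\scaling(p_a+1)}\leq 2^{N_a}$; for the lower bound it replaces $K$ by a smaller, explicitly solvable rate $K'$ (supported on a tube of width $\varepsilon$) and applies the comparison principle of Theorem~\ref{th-comparison}, obtaining the exact binomial identity $(2-e^{-\scaling\varepsilon})^{N_a}$. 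Your Laplace heuristic would need a comparable explicit construction to yield the same quantitative bound.

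A second gap: you do not explain how to conclude the convergence $\lim_\scaling\lambda^\scaling(a)=\lambda^\infty(a)$ at the critical values of $a$ where the rate $r(a,\tau)$ vanishes (i.e.\ $a=a_r$). At such $a$, the bound \eqref{eq:devil_bounds} is vacuous and the Laplace concentration degenerates, as you note, but the theorem still asserts the limit there. The paper closes this by first establishing that both $a\mapsto\lambda^\infty(a)$ and $a\mapsto\lim_\scaling\lambda^\scaling(a)$ are nonincreasing and right-continuous (via $\lambda^\scaling$ being a supremum of continuous functions, using monotonicity in $\scaling$ from the comparison principle), and then extending the equality from non-critical $a$ to all $a$ by right continuity. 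This step is structurally essential and is absent from your proposal.

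Finally, a smaller point: your characterisation of the degenerate set as ``values for which some iterate $\sigma^k(0)$ lands precisely on the boundary $\tau$ modulo $T$'' is not quite what the paper proves. Lemma~\ref{lem:interval} shows that the accumulation set $E_\tau$ (the right endpoints $a$ at which $\lambda^\infty$ is \emph{continuous}) consists of $a$ with $N_a a\in\mathbb{N}T$, i.e.\ the orbit lands on an integer multiple of the period $T$, not on $\tau\bmod T$; and Lemma~\ref{lem:discrete} (that $E_\tau$ is discrete) is what underlies the ``closed countable'' claim. You would need an analogous discreteness argument to justify that assertion.
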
 
The function $a\mapsto \lambda^\infty(a)$ is a staircase with a countable number of steps (see Figure~\ref{fig:figure_devil_2tiers} below and  Figure~\ref{fig:tau05v13510} and Figure~\ref{fig:figure_devil033} in Section~\ref{sec:num}). The quantity $a_r-a$ which controls the rate represents the distance to the right end of the step containing $a$ (on Figure~\ref{fig:figure_devil_2tiers} for instance, we can observe that the speed of convergence depends on this distance). 

\begin{figure}[htbp]
	\centering
		\includegraphics[width=0.50\textwidth]{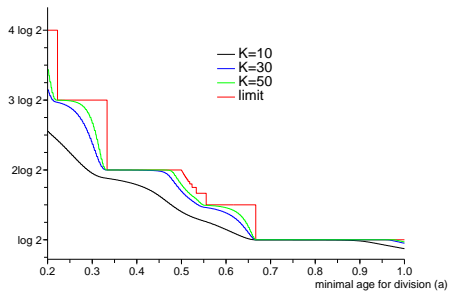}
	\caption{Convergence for $\tau=2/3$ and $T=1$. }
	\label{fig:figure_devil_2tiers}
\end{figure}
There is a special case in which the limit of the Floquet eigenvalue can be
rewritten in terms of integer parts. If $\tau/T<1/2$, and $0<a<T$, then, we have\[ \lambda^\infty(a)=\lceil\frac{\tau}{a}\rceil\log 2 \enspace .
\]
We shall derive intuitively Formula (\ref{eq:lambda_devil}) in Section~\ref{sec:derivation}, by introducing a discrete dynamical system, equipped with a multiplicative functional, the asymptotic geometric mean of which yields 
$\lambda^\infty(a)$. The existence of such a simple asymptotic formula 
reflects the fact that the Perron-Frobenius operator describing
the evolution of the population degenerates as $\scaling\to\infty$. The rate $r(a)$ appears naturally in the proof of convergence. 
This proof relies on Theorem~\ref{th-comparison} (the comparison principle), on the generational model already used in the proof of this theorem, and on 
explicit bounds of the density, for a special initial condition leading
to a trajectory ``close'' to that of the discrete dynamical system. 

The  convergence rate in Theorem~\ref{Th:devil} is reminiscent
of large-deviation or Laplace asymptotics. However, this theorem
differs in its essence of already known asymptotics results in Perron-Frobenius
theory with such large deviations flavor, including 
zero-temperature asymptotics of Ising type models or Frenkel-Kantorova
models~\cite{ABG96,thieullen}. In these problems,
the limit of the Perron eigenvalue, in a log-scale,
turns out to be the ergodic constant
of a controlled deterministic dynamical system,
whereas in the present case, the dynamical system
which determines the limit involves no control.

When $\psi$ vanishes, we may consider $\psi^\varepsilon(t):=\psi(t)+\varepsilon$,
with $\varepsilon>0$, and define the growth rate $\lambda(\epsilon, \scaling)$ with an obvious notation. A comparison of the previous theorems shows that the
limits in $\scaling$ and $\varepsilon$ do not commute
\[
\frac{\log (2)}{a} = \lim_{\varepsilon\to 0}\lim_{\scaling\to\infty} \lambda(\epsilon,\scaling)
\neq \lim_{\scaling\to\infty}\lim_{\varepsilon\to 0} \lambda(\epsilon,\scaling)= \lambda^\infty(a)
\enspace .
\]

%
%
%
%
%
%

%
%
%
%
%
%

\section{Monotonicity with respect to the division rate}

%
%
%
%
%
%

We first focus on the question of the monotonicity of the Floquet eigenvalue with respect to the division rate.
\subsection{A counter example in which the growth is not a monotone function of the division rate}
We next show that increasing the division rate may decrease the growth rate of the population. The counter example is based on the following exactly solvable model. We assume that the period is $T=1$, and fix $\alpha\in (0,1)$. It will be convenient to write
\[
I_1= [0,\alpha)+\mathbb{Z}, \qquad I_2 = [\alpha,1)+\mathbb{Z} \enspace,
\]
and
\[ \chi_j(t):=\bbbone_{I_j}(t)  ,\qquad j=1,2,\qquad \text{so that }
\chi_1+\chi_2 = 1 \enspace .
\]
We shall assume that the division rate
is of the form
\[ K(t,x)=\chi_1(t-x)K_1(t)+\chi_2(t-x)K_2(t) \enspace .
\]
Integrating along a characteristic, we get
\[
n(t,x)= n(t-x,0)\exp\big(-\int_0^xK(t-x+y,y)dy\big)
\]
and so
\[
n(t,x)=n(t-x,0)\exp\big(-\int_0^xK_j(t-x+y)dy\big), \qquad \text{if }t-x\in I_j \enspace .
\]
In other words, when an individual is born at time $t\in I_j$, then its next division will occur with a rate $K_j(t)$. This model has an intuitive interpretation. We may assume for instance
that $I_1$ represents the days, whereas
$I_2$ represents the nights. If being born during the night or during the day
influences the fertility (division rate) of the individuals, 
we arrive at this model.
We may then consider that there are actually two subpopulations $n_j(t,x)$, $j=1,2$, corresponding respectively to individuals born during the day 
or during the night.
We have 
\[n_j(t,x)=n(t,x)\chi_j(t-x),\qquad j=1,2
\]
They satisfy the equations:
$$
\bepa
\p_tn_j+\p_x n_j+K_j(t)n_j=0,\qquad j=1,2\\
n_j(t,0)=2\chi_j(t)\displaystyle\int_0^\infty (K_1(t)n_1(t,x)+K_2(t)n_2(t,x))dx \qquad j=1,2\enspace.
\eepa
$$
Denoting by 
\[ P_j(t)=\displaystyle\int_0^\infty n_j (t,x)dx
\]
the total population 
of type $j$ at time $t$, we get that the vector $P(t):=(P_1(t),P_2(t))^\top$
satisfies the ODE:
\[
\frac{d}{dt} P(t) = M(t)P(t)
\]
where 
\[
M(t):=\left(\begin{array}{cc}(2\chi_1(t)-1)K_1(t) & 2\chi_1(t)K_2(t)\\ 2\chi_2(t)K_2(t) & (2\chi_2(t)-1)K_2(t)\end{array}\right)
\]

We now assume that the functions $K_1(t)$ and $K_2(t)$ are constant 
during the day and during the night, meaning that
\[K_1(t)=a_{1}\chi_1(t)+b_1\chi_2(t),\qquad K_2(t)=a_2\chi_1(t)+b_2\chi_2(t),
\]
where the coefficients $a_i,b_i$ are constant. With this form of the coefficients, we obtain
\[M(t)=\chi_1(t)M_a +\chi_2(t)M_b
\]
where
\[M_a:=\left(\begin{array}{cc}a_1 & 2a_2\\ 0 & -a_2\end{array}\right),
\qquad 
M_b:= \left(\begin{array}{cc}-b_1 & 0\\ 2b_1 & b_2\end{array}\right)
\enspace .
\]
An important point here, still with the above interpretation is the following: $a_2$ and $b_1$ could be considered in some sense as transition coefficients between populations $n_1$ and $n_2$.

Since the linear dynamics $M(t)$ switches between the constant dynamics
$M_a$ and $M_b$, which are successively exercised during the intervals
$[0,\alpha)$ and $[\alpha,1)$, we get 
that 
\[
P(1)= \exp\big((1-\alpha)M_b\big)\exp\big(\alpha M_a\big)P(0) \enspace .
\]
Denoting by $\rho$ the Perron root (spectral radius) of a nonnegative
matrix, we arrive at the following expression for the Floquet eigenvalue
\begin{align}
\lambda_F = \log \rho\bigg( \exp\big((1-\alpha)M_b\big)\exp\big(\alpha M_a\big)\bigg) \enspace .
\end{align}
Let us now consider the situation in which $a_2=b_2=0$, meaning that
an individual born during the night is sterile (division will never occur).
Then, the second columns of the matrices $M_a$ and $M_b$ vanish,
and so 
\[ \exp\big((1-\alpha)M_b\big)\exp\big(\alpha M_a\big)
= \left(\begin{array}{cc}\exp(\alpha a_1 -(1-\alpha)b_1)& 0\\ \star & 1\end{array}\right)
\]
(the value of the off diagonal entry, denoted by $\star$, is irrelevant).
Since the spectral radius of a nonnegative triangular matrix is the maximum of its of its diagonal entries, we get
\[
\lambda_F =\max\big(\alpha a_1 -(1-\alpha)b_1, 0\big) \enspace .
\]
Hence, $\lambda_F$ is an increasing function of the division rate $a_1$,
but a {\em decreasing} function of the division rate $b_1$. This establishes
Proposition~\ref{theo-cex}. \qed

It should be noted that the preceding counter-example subsists
when $a_2$ and $b_2$ are sufficiently small, because the spectral
radius is a continuous function of the parameters. This is confirmed
by Figure~\ref{fig-cex}, in which the Floquet eigenvalue is plotted
as a function of $b_1$ and $b_2$ for $a_1=10$ and $a_2=0.1$.
\begin{figure}[htbp]\begin{center}
\includegraphics[scale=0.60]{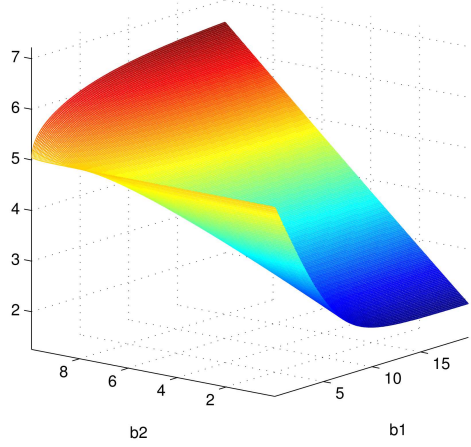}\end{center}
\caption{The counter example. The Floquet eigenvalue as a function of the division rates $b_1$ and $b_2$, when $a_1=10$ and $a_2=0.1$.
When $b_2$ is small, increasing $b_1$ (and so,
increasing the division rate $K(t,x)$) decreases the growth rate.}\label{fig-cex}
\end{figure}

\begin{remark}\label{rk-markov}
This counter example can be understood intuitively in Figure~\ref{fig-intuitive}. The values of the division rate $K(t,x)$ are represented (Left, the values are constant on each cell). Consider now 
a chain of descendants of the same individual (i.e., this
individual, one of his children, one of the children of this children, etc.).
This can be represented by a Markov process $X_t$ with state space in $\R_+$ (the age), which jumps from age $x$ to age $0$ with rate $K(t,x)$. Each time the path
hits the axis $x=0$ corresponds to a division. If the path hits $x=0$ during the night, since $b_2=0$, the corresponding individual is sterile (no division
will ever occur, which is reflected that the path is now an unending line
of slope $1$ in the $(t,x)$ plane). The production of sterile individuals
occurs when the process jumps from a cell labeled ``$b_1$'' to the $x$ axis,
and this occurs precisely with rate $b_1$, which explains intuitively
why the growth rate, $\lambda_F$, is a decreasing function of $b_1$.
\end{remark}
\begin{figure}[htbp]
	\centering
\begin{picture}(0,0)%
\includegraphics{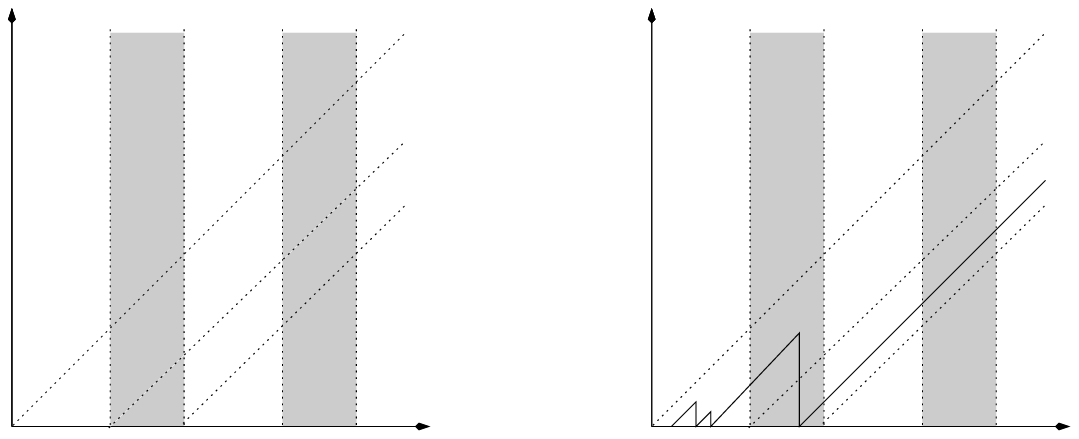}%
\end{picture}%
\setlength{\unitlength}{2072sp}%
\begingroup\makeatletter\ifx\SetFigFont\undefined%
\gdef\SetFigFont#1#2#3#4#5{%
  \reset@font\fontsize{#1}{#2pt}%
  \fontfamily{#3}\fontseries{#4}\fontshape{#5}%
  \selectfont}%
\fi\endgroup%
\begin{picture}(10245,4171)(2236,-6920)
\put(12466,-6856){\makebox(0,0)[lb]{\smash{{\SetFigFont{10}{12.0}{\rmdefault}{\mddefault}{\updefault}{\color[rgb]{0,0,0}$t$}%
}}}}
\put(8101,-2986){\makebox(0,0)[lb]{\smash{{\SetFigFont{10}{12.0}{\rmdefault}{\mddefault}{\updefault}{\color[rgb]{0,0,0}$x$}%
}}}}
\put(6616,-6856){\makebox(0,0)[lb]{\smash{{\SetFigFont{10}{12.0}{\rmdefault}{\mddefault}{\updefault}{\color[rgb]{0,0,0}$t$}%
}}}}
\put(2251,-2986){\makebox(0,0)[lb]{\smash{{\SetFigFont{10}{12.0}{\rmdefault}{\mddefault}{\updefault}{\color[rgb]{0,0,0}$x$}%
}}}}
\put(3061,-6406){\makebox(0,0)[lb]{\smash{{\SetFigFont{10}{12.0}{\rmdefault}{\mddefault}{\updefault}{\color[rgb]{0,0,0}$a_1$}%
}}}}
\put(3691,-5911){\makebox(0,0)[lb]{\smash{{\SetFigFont{10}{12.0}{\rmdefault}{\mddefault}{\updefault}{\color[rgb]{0,0,0}$b_1$}%
}}}}
\put(4456,-5236){\makebox(0,0)[lb]{\smash{{\SetFigFont{10}{12.0}{\rmdefault}{\mddefault}{\updefault}{\color[rgb]{0,0,0}$a_1$}%
}}}}
\put(5311,-4381){\makebox(0,0)[lb]{\smash{{\SetFigFont{10}{12.0}{\rmdefault}{\mddefault}{\updefault}{\color[rgb]{0,0,0}$b_1$}%
}}}}
\put(4456,-5956){\makebox(0,0)[lb]{\smash{{\SetFigFont{10}{12.0}{\rmdefault}{\mddefault}{\updefault}{\color[rgb]{0,0,0}$a_2$}%
}}}}
\put(4186,-6811){\makebox(0,0)[lb]{\smash{{\SetFigFont{10}{12.0}{\rmdefault}{\mddefault}{\updefault}{\color[rgb]{0,0,0}$1$}%
}}}}
\put(3511,-6811){\makebox(0,0)[lb]{\smash{{\SetFigFont{10}{12.0}{\rmdefault}{\mddefault}{\updefault}{\color[rgb]{0,0,0}$\alpha$}%
}}}}
\put(3826,-6496){\makebox(0,0)[lb]{\smash{{\SetFigFont{10}{12.0}{\rmdefault}{\mddefault}{\updefault}{\color[rgb]{0,0,0}$b_2$}%
}}}}
\put(5356,-5146){\makebox(0,0)[lb]{\smash{{\SetFigFont{10}{12.0}{\rmdefault}{\mddefault}{\updefault}{\color[rgb]{0,0,0}$b_2$}%
}}}}
\end{picture}%
	\caption{The counter example explained. The values of $K(t,x)$ (left). The night is shown in grey. A typical path (right, broken line). The child born during the night is sterile (final segment of the path), and the division coefficient $b_1$ determines the creation of this child.}
	\label{fig-intuitive}
\end{figure}
\begin{remark}
The system does not enter in the framework  considered in Appendix for the existence of the Floquet eigenvalue. Indeed, the preceding study merely determines
the growth rate of the ``aggregated'' population $P(t)$, which is determined
as the Perron eigenvalue of a matrix. However, the growth of $n(t,x)$ is readily
derived from the one of $P(t)$. To see this, it is convenient to introduce
the adjoint positive Floquet eigenvector of $M(t)$, whose existence
and uniqueness are guaranteed by Perron-Frobenius theory (as soon as the coefficients $a_1,a_2,b_1,b_2$ are positive):
$$
-\frac{d}{dt}\left(\begin{array}{cc}\phi_1 & \phi_2\end{array}\right)+\lambda \left(\begin{array}{cc}\phi_1 & \phi_2\end{array}\right)+\left(\begin{array}{cc}\phi_1 & \phi_2\end{array}\right)M(t)=0 .
$$
We have 
$$
\frac{d}{dt}(\phi(t),P(t))=\lambda (\phi(t),P(t))
$$
which can be read as, denoting $\phi(t,x)=\phi_1(t)\chi_1(t-x)+\phi_2(t)\chi_2(t-x)$, 
$$\frac{d}{dt}\displaystyle\int_0^\infty n(t,x)\phi(t,x)dx=\lambda \displaystyle\int_0^\infty n(t,x)\phi(t,x)dx,$$
It is straightforward to check that $\phi_1$ and $\phi_2$ are positive
(still assuming that the coefficients $a_1,a_2,b_1,b_2$ are positive).
Hence, there exists $M\geq m>0$ such that 
$m\leq \phi(t,x)\leq M$, and therefore, 
$$me^{\lambda t} \int n^0 dx\leq\int n(t,x)dx\leq Me^{\lambda t}\int n^0 dx.$$
This shows that the Floquet eigenvalue $\lambda$ computed from the aggregated
dynamical system does determine the behavior of the solutions. 
\end{remark}

\subsection{A sufficient condition for monotonicity}
The preceding counter example is due to a ``birth day penalty'' (the date
of birth of the individual influences critically its division rate). 
We shall avoid such a pathological behavior by making the following 
assumption:
\beq
\label{eq:K_condition}
(t,v)\mapsto \int_v^tK(s,s-v)ds\quad\text{is nonincreasing in }v.
\eeq
This condition is fulfilled when the division rate $K(t,x)\geq 0$ depends only on one of the two variables $t$ and $x$. It is also fulfilled if for any $t$, $K(t,\cdot)$ is a nondecreasing function (and particularly, in the case of separated variables $K(t,x)=\psi(t)B(x)$ when $B$ is nondecreasing). 

This is illustrated in Figure~\ref{fig-cond}. The condition requires
the integral of $K$ over the characteristic shown on the figure
to be nonincreasing in $v$. Note that this has a probabilist interpretation in terms of the Markov process $X_t$ introduced in the preceding section. Indeed,
\[\exp\big(-\int_v^tK(s,s-v)ds\big)
\]
represents the survival probability at time $t$ of an individual born at time $v$, i.e., the probability, conditional to $X_v=0$,
that the process makes no jump to point $0$ before time $t$. The condition
requires this probability to be a nondecreasing function
of $v$, in other words, that by delaying the birth, the survival
probability at a given time cannot decrease. 
\begin{figure}
\begin{center}
\begin{picture}(0,0)%
\includegraphics{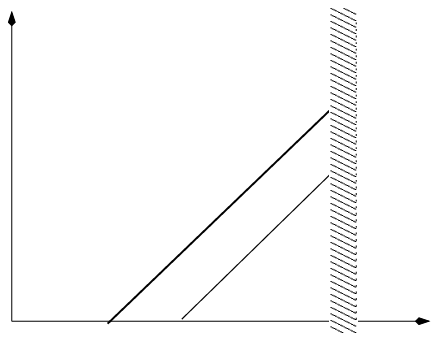}%
\end{picture}%
\setlength{\unitlength}{2072sp}%
\begingroup\makeatletter\ifx\SetFigFont\undefined%
\gdef\SetFigFont#1#2#3#4#5{%
  \reset@font\fontsize{#1}{#2pt}%
  \fontfamily{#3}\fontseries{#4}\fontshape{#5}%
  \selectfont}%
\fi\endgroup%
\begin{picture}(9421,3258)(2101,-6997)
\put(6616,-6856){\makebox(0,0)[lb]{\smash{{\SetFigFont{10}{12.0}{\rmdefault}{\mddefault}{\updefault}{\color[rgb]{0,0,0}$s$}%
}}}}
\put(5671,-6901){\makebox(0,0)[lb]{\smash{{\SetFigFont{10}{12.0}{\rmdefault}{\mddefault}{\updefault}{\color[rgb]{0,0,0}$t$}%
}}}}
\put(3331,-6901){\makebox(0,0)[lb]{\smash{{\SetFigFont{10}{12.0}{\rmdefault}{\mddefault}{\updefault}{\color[rgb]{0,0,0}$v$}%
}}}}
\put(4186,-6901){\makebox(0,0)[lb]{\smash{{\SetFigFont{10}{12.0}{\rmdefault}{\mddefault}{\updefault}{\color[rgb]{0,0,0}$u$}%
}}}}
\put(6796,-4741){\makebox(0,0)[lb]{\smash{{\SetFigFont{10}{12.0}{\rmdefault}{\mddefault}{\updefault}{\color[rgb]{0,0,0}$\displaystyle\int_v^tK(s,s-v)ds\geq \displaystyle\int_u^tK(s,s-u)ds $}%
}}}}
\put(2116,-4066){\makebox(0,0)[lb]{\smash{{\SetFigFont{10}{12.0}{\rmdefault}{\mddefault}{\updefault}{\color[rgb]{0,0,0}$x$}%
}}}}
\end{picture}%
\end{center}
\caption{The sufficient monotonicity condition: the survival probability at time $t$ (the exponential of the opposite of the integral) increases with the birth time $v$.}\label{fig-cond}
\end{figure}

\begin{theorem}\label{th:monotonicity}
Provided $K^1$ or $K^2$ satisfies \eqref{eq:K_condition} and $K^1\geq K^2$, then, for any $n^0\in L^1(\mathbb{R}_+)$, if we denote by $n^i$ the solution of \eqref{eq:periodic_division} with $K=K^i$, we have, for any $t\geq 0$,
$$\displaystyle\int_0^\infty n^1(t,x)dx\geq\displaystyle\int_0^\infty n^2(t,x)dx.$$
\end{theorem}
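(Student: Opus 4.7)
The plan is to follow the disaggregation by generations foreshadowed after Theorem~\ref{th-comparison}. Write $n=\sum_{i\geq 0}2^i n_i$, where $n_0$ solves $\partial_t n_0+\partial_x n_0+Kn_0=0$ with vanishing boundary inflow and initial datum $n^0$, and, for $i\geq 0$,
\begin{equation*}
\partial_t n_{i+1}+\partial_x n_{i+1}+Kn_{i+1}=0,\quad n_{i+1}(t,0)=\int_0^\infty K(t,x)n_i(t,x)\,dx,\quad n_{i+1}(0,\cdot)=0.
\end{equation*}
Setting $S_j(t):=\sum_{i\geq j}\int_0^\infty n_i(t,x)\,dx$, an Abel transformation yields
\begin{equation*}
\int_0^\infty n(t,x)\,dx = S_0(t)+\sum_{j\geq 1}2^{j-1}S_j(t),
\end{equation*}
which reduces the theorem to showing that each $S_j(t)$ is a nondecreasing function of $K\in\{K^1,K^2\}$.

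The probabilistic meaning of $S_j$ is read off the single-lineage density $\bar n:=\sum_i n_i$. It satisfies the conservative renewal $\partial_t\bar n+\partial_x\bar n+K\bar n=0$, $\bar n(t,0)=\int K\bar n\,dx$, $\bar n(0,\cdot)=n^0$, and an immediate mass balance gives $S_0(t)\equiv \int n^0$, independent of $K$. For $j\geq 1$, $S_j(t)/\int n^0$ equals $\mathbb P_K(N_t\geq j)$, where $N_t$ is the number of resets up to time $t$ of the Markov process $X_t$ of Remark~\ref{rk-markov}, started from the age distribution $n^0/\int n^0$. The theorem is thereby reduced to the stochastic domination $N_t^1\succeq_{\text{st}}N_t^2$.

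I would prove this stochastic domination by a pathwise coupling of the reset times driven by a common i.i.d.\ uniform sequence $(U_k)_{k\geq 1}$. Given the $(k-1)$-th reset times $\tau_{k-1}^1\leq\tau_{k-1}^2$, define the $k$-th reset times implicitly by $\Pi_{K^i}(\tau_{k-1}^i,\tau_k^i)=U_k$, where $\Pi_K(v,t):=\exp(-\int_v^tK(s,s-v)\,ds)$ is the survival kernel of a newborn at time $v$. The pointwise inequality $K^1\geq K^2$ yields $\Pi_{K^1}(v,t)\leq\Pi_{K^2}(v,t)$ for every $v\leq t$, while hypothesis \eqref{eq:K_condition}, applied to whichever of $K^1,K^2$ it is assumed on, asserts that $v\mapsto \Pi_K(v,t)$ is nondecreasing. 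Chaining these with the induction hypothesis gives
\begin{equation*}
\Pi_{K^1}(\tau_{k-1}^1,t)\leq\Pi_{K^2}(\tau_{k-1}^2,t)\qquad\text{for all }t\geq\tau_{k-1}^2,
\end{equation*}
from which $\tau_k^1\leq\tau_k^2$ follows by the inverse-CDF characterisation; the base case $k=1$ (for a common initial age) uses only $K^1\geq K^2$. Iterating, $N_t^1\geq N_t^2$ almost surely, hence $S_j^1\geq S_j^2$ for every $j$, and the theorem follows from the Abel expansion above.

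The only delicate point is precisely this induction: raising $K$ makes jumps more frequent but shortens the survival of each newborn, and in the coupled models these two effects act on \emph{different} birth times. Hypothesis \eqref{eq:K_condition} is what reconciles them, by guaranteeing that delaying a birth never decreases the survival probability at any fixed future time. Without it the counter-example of Proposition~\ref{theo-cex} shows that the comparison genuinely fails, so the condition is essentially sharp.
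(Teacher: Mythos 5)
Your proof is correct, and it reaches the result by a genuinely different route from the paper, even though both share the same skeleton: the generational decomposition $n=\sum_{i\geq 0}2^i n_i$, the Abel identity $\int n = S_0+\sum_{j\geq 1}2^{j-1}S_j$, and the reduction to the monotonicity of each $S_j$ in $K$. Where the paper proves the key Lemma~\ref{lem-majorization} analytically — it computes an explicit integral formula for $S_j$ (Lemma~\ref{lem:Sj}), then performs an induction in $j$ via an integration by parts in the birth variable $v$, with condition~\eqref{eq:K_condition} guaranteeing $\partial_v Q^1(t,v)\leq 0$ and thus the correct sign of the boundary-free term — you instead establish the stochastic domination $N_t^1\succeq_{\mathrm{st}}N_t^2$ directly, via a pathwise coupling of the reset times using a common i.i.d.\ uniform sequence and the inverse-CDF transform of the survival kernel $\Pi_K(v,t)=\exp(-\int_v^tK(s,s-v)ds)$. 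The paper records this probabilistic reading only as a remark (``interpretation in terms of stochastic order or majorization'') after the analytic lemma; your coupling realizes it as the proof mechanism, showing the stronger almost-sure pathwise inequality $\tau_k^1\leq\tau_k^2$. The chaining step correctly uses the two monotonicities ($K^1\geq K^2$ gives $\Pi_{K^1}\leq\Pi_{K^2}$; condition~\eqref{eq:K_condition} gives $v\mapsto\Pi_K(v,t)$ nondecreasing) in either order depending on which of $K^1,K^2$ satisfies the condition, matching the paper's hypothesis. One small point worth pinning down: $\Pi_K(v,\cdot)$ need only be nonincreasing (not strictly), and may have a positive limit, so the ``implicit'' definition of $\tau_k^i$ should be phrased as the generalized inverse $\tau_k^i=\inf\{t\geq\tau_{k-1}^i:\Pi_{K^i}(\tau_{k-1}^i,t)\leq U_k\}$ (possibly $+\infty$); the ordering $\tau_k^1\leq\tau_k^2$ still follows from the inclusion of the corresponding sublevel sets. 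With that reading, the argument is complete, and what it buys relative to the paper is a cleaner conceptual picture at the cost of importing coupling machinery; the paper's analytic induction is more self-contained but somewhat opaque as to \emph{why} condition~\eqref{eq:K_condition} is the right hypothesis.
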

To establish this result, we introduce the following system of PDEs:
\beq\label{eq:division_generation}
\bepa
\p_tn_i+\p_xn_i+K(t,x)n_i=0,\qquad i\geq 0,\\[0.3cm]
n_0(t,0)=0 \\
n_{i}(t,0)=\dis\displaystyle\int_0^\infty K(t,x)n_{i-1}(t,x)dx,\qquad i\geq 0\\[0.3cm]
n_0(t=0,x)=n^0(x),\qquad n_i(t=0,x)=0,\quad\text{for }i\geq 1. 
\eepa
\eeq

We easily check that $n=\sum_{i\geq 0} 2^i n_i$ is the solution to \eqref{eq:periodic_division}. Intuitively, the term $2^i n_i$ represents the number of individuals of the ``generation'' $i$, and so, we shall refer to this model as the 
``generational model'' in the sequel.
We denote by $(n_i^1)_{i\geq 0}$, (resp.\ $(n_i^2)_{i\geq 0}$) the solution to \eqref{eq:division_generation} with $K=K^1$ (resp.\ $K=K^2$). We also introduce:
\[
S_j(t):=\sum_{i\geq j}\displaystyle\int_0^\infty n_i(t,x)dx,\qquad \text{for } j\geq 0
\enspace .
\]
A short computation leads to 
\[ \displaystyle\int_0^\infty n(t,x)dx=S_0(t)+\sum_{j \geq 1} 2^{j-1}S_j(t)
\enspace .
\]
\begin{lemma}\label{lem:Sj}
We have:
\begin{align}
S_0(t)&=S_0(0)=\displaystyle\int_0^\infty n^0(x) dx \enspace,
\\
S_1(t)&=\displaystyle\int_0^\infty n^0(x)\bigg(1-\exp\big(-\int_0^t K(s,x+s)ds\big) \bigg)dx \enspace,
\end{align}
and for $j\geq 1$,
\begin{align}
S_j(t)&=\int_{s=0}^t n_{j-1}(s,0)\bigg(1-\exp\big(-\int_0^{t-s}K(s+y,y)dy\big)\bigg)ds\enspace .\label{e-final}
\end{align}
\end{lemma}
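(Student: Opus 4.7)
The plan is to introduce the generational total masses $m_i(t):=\int_0^\infty n_i(t,x)\,dx$ and exploit the fact that the disaggregated system is mass-conservative: each division transfers one generation-$(i-1)$ individual to generation $i$ with no multiplicative factor.

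First, integrating the transport equation for $n_i$ in $x$, using decay at infinity together with the identity $\int_0^\infty K(t,x)n_i(t,x)\,dx=n_{i+1}(t,0)$ (valid for all $i\geq 0$ by the boundary condition defining $n_{i+1}$), yields the ODE system $\frac{d m_0}{dt}=-n_1(t,0)$ and $\frac{d m_i}{dt}=n_i(t,0)-n_{i+1}(t,0)$ for $i\geq 1$. Summing from $i=j$ produces a telescoping series. For $j=0$ this gives $\frac{d S_0}{dt}=0$, hence $S_0(t)=S_0(0)=\int n^0\,dx$; for $j\geq 1$ it gives $\frac{d S_j}{dt}=n_j(t,0)$ with $S_j(0)=0$, so $S_j(t)=\int_0^t n_j(s,0)\,ds$. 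The required tail decay and legitimacy of the telescoping follow from the rough induction bound $m_i(t)\leq \|K\|_\infty^i t^i S_0(0)/i!$.

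For $S_1$, since $n_0(t,0)=0$ and $n_0(0,x)=n^0(x)$, the method of characteristics gives $n_0(t,x)=n^0(x-t)\exp\bigl(-\int_0^tK(s,x-t+s)\,ds\bigr)\bbbone_{x\geq t}$. Integrating in $x$ with the substitution $y=x-t$ yields $m_0(t)=\int_0^\infty n^0(y)\exp\bigl(-\int_0^tK(s,y+s)\,ds\bigr)\,dy$, and the announced formula follows from $S_1(t)=S_0(t)-m_0(t)$.

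For $S_j$ with $j\geq 2$ (the stated formula at $j=1$ collapses to $0$ because $n_0(s,0)\equiv 0$, and is in effect subsumed by the previous step), one expresses $n_j(s,0)=\int_0^\infty K(s,x)n_{j-1}(s,x)\,dx$. Since $n_{j-1}(0,\cdot)=0$ for $j\geq 2$, characteristics give $n_{j-1}(s,x)=n_{j-1}(s-x,0)\exp\bigl(-\int_0^xK(s-x+u,u)\,du\bigr)\bbbone_{x\leq s}$. Substituting, using $S_j(t)=\int_0^t n_j(s,0)\,ds$, applying Fubini, and changing variables twice (first $r=s-x$, then $v=s-r$ in the inner integral) reduces the inner integrand to $K(r+v,v)\exp\bigl(-\int_0^vK(r+u,u)\,du\bigr) = -\frac{d}{dv}\exp\bigl(-\int_0^vK(r+u,u)\,du\bigr)$. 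It integrates explicitly to $1-\exp\bigl(-\int_0^{t-r}K(r+u,u)\,du\bigr)$, and renaming $r\to s$ and $u\to y$ delivers \eqref{e-final}. The only genuine obstacle is the careful bookkeeping in the nested integrals and changes of variables of this last step; the telescoping and characteristic-method calculations are routine.
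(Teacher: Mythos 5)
Your proof is correct and follows essentially the same route as the paper: establish $\tfrac{d}{dt}S_j=n_j(t,0)$ by integrating the transport equations and telescoping, then evaluate $S_j(t)=\int_0^t n_j(s,0)\,ds$ using the method of characteristics, Fubini, and a change of variables. The one small departure is your computation of $S_1$: rather than expanding $\int_0^t n_1(s,0)\,ds$ via characteristics and interchanging integrals as the paper does, you compute $m_0(t)$ directly and use $S_1=S_0-m_0$, which is a touch cleaner and entirely equivalent. You also correctly observe that formula \eqref{e-final} degenerates to $0$ at $j=1$ (since $n_0(s,0)\equiv 0$) and is really a statement for $j\geq 2$ — the paper's own proof indeed only treats $j\geq 2$ for that identity, so the ``$j\geq 1$'' in the lemma statement should be read as ``$j\geq 2$'', with the $S_1$ case handled separately by the second formula.
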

\begin{proof}
It is straightforward to show that for every $j\geq 0$,
\[\f{d}{dt}S_j(t)=n_j(t,0) \enspace,
\]
and so
\begin{align} S_j(t)=\int_0^t n_j(s,0)ds+S_j(0) \enspace .
\label{e-bilandemasse}
\end{align}
This leads immediately to  the first statement, since for all $s>0$,
$n_0(s,0)=0$. 

To establish the next statements, we shall use the characteristics:
\begin{align}
n_0(t,x)&=0,\quad \text{if }x\leq t,\qquad \nonumber\\
n_0(t,x)&=n^0(x-t)\exp\big(-\int_0^tK(s,x-t+s)ds\big),\quad\text{if }x\geq t  \enspace,\label{e-carac1}
\end{align}
and for $i\geq 1$,
\begin{align}
n_i(t,x)&=0, \quad \text{if }t\leq x, \qquad \nonumber\\ 
n_i(t,x)&=n_i(t-x,0)\exp\big(-\int_0^xK(t-x+y,y)dy\big),\quad\text{if }x\leq t \enspace. \label{e-carac2}
\end{align}
The lack of symmetry between the cases $i\geq 1$ and $i=0$ is due to the boundary conditions: $n_0(0,x)=n^0(x)$ is known, whereas for $i\geq 0$,
$n_i(t,0)$ can be inductively assumed to be known, using $n_{i-1}$.

Since $n_1(0,x)=0$, we have $S_1(0)=0$. Using~\eqref{e-bilandemasse}, we get
\begin{align*}
S_1(t)&=\int_{s=0}^t n_1(s,0)ds= \int_{s=0}^t \int_{x=0}^\infty K(s,x)n_{0}(s,x) dx ds \\
&=\int_{s=0}^t\int_{x=s}^\infty K(s,x)n^0(x-s)\exp\big(-\int_0^sK(u,x-s+u)du\big)dxds\\
&=\int_{s=0}^t\int_{y=0}^\infty K(s,y+s)n^0(y)\exp\big(-\int_0^sK(u,y+u)du\big)dyds \enspace,
\end{align*}
and, interchanging the order of integration, 
\begin{align*}
S_1(t)&=\int_{y=0}^\infty n^0(y)\int_{s=0}^tK(s,y+s)\exp\big(-\int_0^sK(u,y+u)du\big)dsdy\\
&=\displaystyle\int_0^\infty n^0(y)\bigg(1-\exp\big(-\int_0^t K(s,y+s)ds\big)\bigg)dy,\end{align*}
which is the second statement. 

Assume now that $j\geq 2$. Arguing as above, but using this time~\eqref{e-carac2} instead of~\eqref{e-carac1}, we get 
\begin{align*}
S_j(t)&=\int_{s=0}^t \int_{x=0}^\infty K(s,x)n_{j-1}(s,x) dx ds \\
&= \int_{s=0}^t \int_{x=0}^s K(s,x)n_{j-1}(s-x,0)\exp\big(-\int_0^xK(s-x+y,y)dy\big) dx ds \\
&= \int_{x=0}^t\int_{s=x}^tK(s,x)n_{j-1}(s-x,0)\exp\big(-\int_0^xK(s-x+y,y)dy\big)dsdx\\
&=\int_{x=0}^t\int_{u=0}^{t-x}K(u+x,x)n_{j-1}(u,0)\exp\big(-\int_0^xK(u+y,y)dy\big)dudx\\
&=\int_{u=0}^tn_{j-1}(u,0)\int_{x=0}^{t-u}K(u+x,x)\exp\big(-\int_0^xK(u+y,y)dy\big)dxdu,
\end{align*}
which leads to the third statement.
\end{proof}

Theorem~\ref{th:monotonicity} will be obtained as an immediate consequence of the following lemma.
\begin{lemma}\label{lem-majorization}
Suppose $K^1$ satisfies \eqref{eq:K_condition}, then 
\begin{align*}
K^2\geq K^1&\implies S_j^2(t)\geq S_j^1(t), \qquad \forall t,j\\
K^2\leq K^1&\implies S_j^2(t)\leq S_j^1(t), \qquad \forall t,j\enspace .
\end{align*}
\end{lemma}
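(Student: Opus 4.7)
The plan is to proceed by induction on $j$, starting from the explicit formulas of Lemma~\ref{lem:Sj}. The base cases are immediate and do not invoke the structural condition~\eqref{eq:K_condition}: $S_0(t)=\int n^0\,dx$ is independent of $K$, while for $S_1$ the integrand $n^0(x)(1-\exp(-\int_0^t K(s,x+s)\,ds))$ is pointwise monotone in $K$, so both comparisons for $j=1$ are direct.

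For the inductive step $j\geq 2$, I introduce the convenient shorthand
\[
F^i(s,t) := 1-\exp\Big(-\int_s^t K^i(u,u-s)\,du\Big),
\]
so that after the change of variables $u=s+y$ formula~\eqref{e-final} rewrites as $S_j^i(t)=\int_0^t n_{j-1}^i(s,0)\,F^i(s,t)\,ds$. Two facts are central. First, $F^i(s,t)$ is pointwise nondecreasing in $K^i$. Second, condition~\eqref{eq:K_condition} on $K^1$ says exactly that $s\mapsto F^1(s,t)$ is nonincreasing on $[0,t]$, i.e.\ $-\partial_s F^1(s,t)\geq 0$. Assuming $K^2\geq K^1$, I telescope
\[
n_{j-1}^2(s,0)F^2(s,t) - n_{j-1}^1(s,0)F^1(s,t) = n_{j-1}^2(s,0)\bigl(F^2-F^1\bigr)(s,t)+\bigl(n_{j-1}^2-n_{j-1}^1\bigr)(s,0)\,F^1(s,t),
\]
\emph{deliberately} multiplying the difference of $n$'s by the induction-friendly $F^1$. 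Integrating in $s$, the first piece is pointwise nonnegative. The second piece I handle by integration by parts: the primitive of $s\mapsto n_{j-1}^i(s,0)$ is $S_{j-1}^i(s)$ by~\eqref{e-bilandemasse}, the $s=t$ boundary term vanishes since $F^1(t,t)=0$, the $s=0$ boundary term vanishes since $S_{j-1}^i(0)=0$ whenever $j-1\geq 1$, and what remains is
\[
\int_0^t\bigl(S_{j-1}^2(s)-S_{j-1}^1(s)\bigr)\cdot\bigl(-\partial_s F^1(s,t)\bigr)\,ds,
\]
a product of two nonnegative factors by the inductive hypothesis and by the monotonicity of $F^1$. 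The case $K^2\leq K^1$ follows by the mirror decomposition, again with $F^1$ (not $F^2$) playing the role of the integrable-by-parts factor in the second term.

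The main conceptual obstacle, and the reason for the asymmetric form of the hypothesis, is exactly this pairing. The induction furnishes only the integrated comparison $S_{j-1}^2\geq S_{j-1}^1$, never a pointwise comparison of the boundary values $n_{j-1}^i(\cdot,0)$; we are therefore forced to transfer the inequality from $S_{j-1}$ back to $n_{j-1}(\cdot,0)$ via integration by parts, which in turn forces the difference $n_{j-1}^2-n_{j-1}^1$ to be multiplied by an $F$ that is monotone in $s$. This is what compels condition~\eqref{eq:K_condition} to be imposed on $K^1$ specifically, consistent with the statement of Theorem~\ref{th-comparison}.
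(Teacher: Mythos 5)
Your proof is correct and follows essentially the same route as the paper: induction on $j$, the base cases $j=0,1$ read off from Lemma~\ref{lem:Sj}, and for $j\geq 2$ the identical telescoping decomposition (the paper's $Q^k(t,v)$ is exactly your $F^k(v,t)$ after the substitution $u=v+y$), followed by the same integration by parts against the primitive $S_{j-1}^i$, with boundary terms vanishing for the same two reasons. The only cosmetic difference is that you track $S_j^2-S_j^1$ while the paper tracks $S_j^1-S_j^2$.
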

\begin{proof} The proof is performed by induction on $j$. We shall only
consider the case in which $K^2\geq K^1$, the other case being similar.

For $j=0,1$, the conclusion follows readily from the two first statements
in Lemma~\ref{lem:Sj}. Assume now that $j\geq 2$. 
We set
\[ Q^k(t,v):=1-\exp\big(-\int_0^{t-v}K^k(v+y,y)dy \big)
\]
Using~\eqref{e-final}, we get
\begin{align*}
 S_j^1(t)-S^2_j(t)&=\int_0^tn_{j-1}^1(v,0)Q^1(t,v)dv-\int_0^tn_{j-1}^2(v,0)Q^2(t,v)dv,\\
 &\!\!\!\!\!\!\!\!\!\!\!\!\!\!\!\!\!\!\!\!\!\!\!\!\!\!\!\!
=\int_0^t(n_{j-1}^1(v,0)-n_{j-1}^2(v,0))Q^1(t,v)dv+\int_0^tn_{j-1}^2(v,0)(Q^1(t,v)-Q^2(t,v))dv\\
 &\!\!\!\!\!\!\!\!\!\!\!\!\!\!\!\!\!\!\!\!\!\!\!\!\!\!\!\!=\big[(S_{j-1}^1(v)-S_{j-1}^2(v))Q^1(t,v)\big]^t_0-\int_0^t(S_{j-1}^1(v)-S_{j-1}^2(v))\f{d}{dv}Q^1(t,v)\\
 &\qquad +\int_0^tn_{j-1}^2(v,0)(Q^1(t,v)-Q^2(t,v))dv,\\
 &\!\!\!\!\!\!\!\!\!\!\!\!\!\!\!\!\!\!\!\!\!\!\!\!\!\!\!\!=-\int_0^t(S_{j-1}^1(v)-S_{j-1}^2(v))\f{d}{dv}Q^1(t,v)+\int_0^tn_{j-1}^2(v,0)(Q^1(t,v)-Q^2(t,v))dv \enspace .
\end{align*}
 The condition \eqref{eq:K_condition} ensures the nonpositivity of $\f{d}{dv}Q^1(t,v)$. Moreover, $K^2\geq K^1$ readily implies $Q^2\geq Q^1$. Using the induction
hypothesis, we deduce that $S_j^2\geq S_j^1$.
\end{proof}
\begin{remark}
Lemma~\ref{lem-majorization} has an interpretation
in terms of stochastic order or majorization~\cite{marshall79}. 
Recall that for $\mathbb{R}$-valued random variables $Z^1,Z^2$,
the stochastic order $\geq_{st}$ is such that $Z^2\geq_{st} Z^1$ holds
if and only if $\mathbb{P}(Z^2\geq t) \geq \mathbb{P}(Z^1 \geq t)$ for all $t\in \mathbb{R}$.
Now, let $X(t)$ denote the Markov process representing
the age of a single individual, defined in Remark~\ref{rk-markov},
and let $Y(t)$ denotes its number of jumps at time $t$, i.e.,
the ``generation'' of an individual.
Then, $S_j(t)= \sum_{i\geq j} \int_0^\infty n_i(x,t)dt = \mathbb{P}(Y(t) \geq j)$,
and so, denoting by $Y^1(t),Y^2(t)$ the two processes obtained
in this way from $K^1,K^2$, we see that the properties
of Lemma~\ref{lem-majorization} are equivalent to 
\begin{align*}
K^2\geq K^1&\implies Y^2(t)\geq_{st} Y^1(t), \qquad \forall t\\
K^2\leq K^1&\implies Y^2(t)\leq_{st} Y^1(t), \qquad \forall t\enspace .
\end{align*}
\end{remark}
 
\section{Asymptotics of the growth rate}\label{sec:asymp}
We now assume that the division
rate is of the form
\[
K(t,x)=\scaling \psi(t)B(x)\bbbone_{[a,+\infty[}(x) 
\]
an determine the limit of the Floquet eigenvalue of System~\eqref{eq:periodic_division} as the division rate tends to infinity, i.e., as the scaling
parameter $\scaling$ tends to infinity.
\subsection{\texorpdfstring{When both $\psi$ and $B$ are positive: proof of Theorem~\ref{Th:log2sura}}{When both psi and B are positive: proof of Theorem~\ref{Th:log2sura}}}
The Floquet eigenvalue $\lambda_F^\scaling$ satisfies

\begin{equation}\label{eq:Floquet_B}
\left\lbrace\begin{array}{l}
\frac{\partial}{\partial t}N^\scaling(t,x)+\frac{\partial}{\partial x}N^{\scaling}(t,x) +\big[\lambda_F^{\scaling}+ \scaling\psi(t)B(x)\bbbone_{[a,+\infty[}(x) \big]N^{\scaling}(t,x)=0, \\[0.3cm]
N^{\scaling}(t,0)=2\scaling\psi(t) \int_{a}^\infty B(x)N^{\scaling}(t,x)dx,\\[0.3cm]
N^{\scaling}>0, \; \text{$T$-periodic}.
\end{array}\right.
\end{equation}
The existence of the Floquet eigenvector $N^\scaling$ is derived in the appendix
as a consequence of the Krein-Rutman theorem. In particular,
the function $B$ is required to be positive and of infinite integral.
We normalize the Floquet eigenvector by requiring that 
\begin{equation}
\int_0^T\displaystyle\int_0^\infty N^{\scaling}(t,x)dxdt=1 \enspace .
\end{equation}

To establish Theorem~\ref{Th:log2sura}, we shall think of the Floquet
eigenvalue $\lambda_F^\scaling=\lambda_F^\scaling(\psi)$ as a function of
the time modulation $\psi$. 
Since the division rate is a nondecreasing function of $\psi$, using the comparison principle (Theorem~\ref{th-comparison}), 
we get
\[
\lambda_F^\scaling(\underline{\psi}) \leq \lambda_F^\scaling(\psi) \leq \lambda_F^\scaling(\overline{\psi})\enspace ,
\]
where $\underline{\psi}$ and $\overline{\psi}$ denote the constant functions equal to the minimum or maximum of $\psi$, respectively.
When the function $\psi(t)$ is equal to a constant $\alpha$, the Floquet eigenvector can be chosen to be independent of $t$, so that $N^\scaling(t,x)=N^\scaling(x)$. Then, the Floquet
eigenvalue $\lambda^\scaling_F(\alpha)$ is actually an ordinary eigenvalue.
Indeed, it is the only scalar $\mu$ solution of the system
\begin{equation}\label{eq:Floquet_Bb}
\left\lbrace\begin{array}{l}
\frac{d}{dx}N^{\scaling}(x) +\big[\mu+ \scaling\alpha B(x)\bbbone_{[a,+\infty[}(x) \big]N^{\scaling}(x)=0, \\[0.3cm]
N^{\scaling}(0)=2\scaling\alpha \int_{a}^\infty B(x)N^{\scaling}(x)dx,\\[0.3cm]
N^{\scaling}>0
\end{array}\right.
\end{equation}
Integrating the latter differential equation, we get
\[
N^\scaling(x)=N^\scaling(0)\exp\big(-\mu x -\scaling \alpha \int_a^x B(y) dy\big) \enspace  .
\]
Using the boundary condition, and eliminating $N^\scaling(0)$, we determine
$\mu$ via the following implicit equation:
\[
2\int_a^\infty \scaling\alpha B(x)\exp\big(-\mu x -\scaling\alpha \int_a^x B(y)dy\big)dx=1
\]
Since $\mu=\lambda_F^\scaling(\alpha)\geq 0$ (in fact, $\mu>0$, see Theorem~\ref{Th:Existence_Floquet} in the appendix), we have
\[ 
1\leq 2\exp(-\mu a)\int_a^\infty \scaling\alpha B(x)\exp\big( -\scaling\alpha\int_a^x B(y)dy\big)dx=2\exp(-\mu a)
\]
and, taking $\alpha:=\overline{\psi}$, we get
\[
\lambda_F^\scaling(\psi)\leq \lambda_F^\scaling(\overline{\psi})\leq \frac{\log 2}{a}
\enspace .
\]
To bound the Floquet eigenvalue from below, we perform
an integration by parts, so that:
\[
 1=2\exp(-\mu a)-\mu\int_a^\infty \exp\big(-\mu  x -\scaling\alpha\int_a^x B(y)dy\big)dx \enspace. 
\]
Take now $\alpha:=\underline{\psi}$. 
Using the fact that $0<\lambda_F^\scaling(\underline{\psi})$
and a dominated convergence argument, we deduce that
\[ \int_a^\infty \exp\Big(-\lambda_F^\scaling(\underline{\psi}) x -\scaling\underline{\psi}\int_a^x B(y)dy\Big)dx
\leq \int_a^\infty \exp\Big(-\scaling\underline{\psi}\int_a^x B(y)dy\Big)dx\xrightarrow[\scaling\rightarrow +\infty]{} 0
\]
and so
\[2\exp(-\lambda_F^{\scaling}(\underline{\psi}) a)\xrightarrow[\scaling\rightarrow +\infty]{} 1,
\]
which shows that $\liminf_{\scaling\to\infty}\lambda_F^\scaling(\psi)\geq \lim_{\scaling\to\infty}\lambda_F^\scaling(\underline{\psi})=\log 2/a$, concluding
the proof of the theorem. \qed

We give a numerical illustration of this theorem in Figure~\ref{fig:limit152050}. These numerical simulations were produced along the lines of~\cite{Lepoutre_mmnp} using a monotone finite difference
scheme. The Floquet eigenvalue was computed by applying the power algorithm
to the one day discretized evolution operator.
\begin{figure}[htpb]
	\centering

	\includegraphics[width=0.50\textwidth]{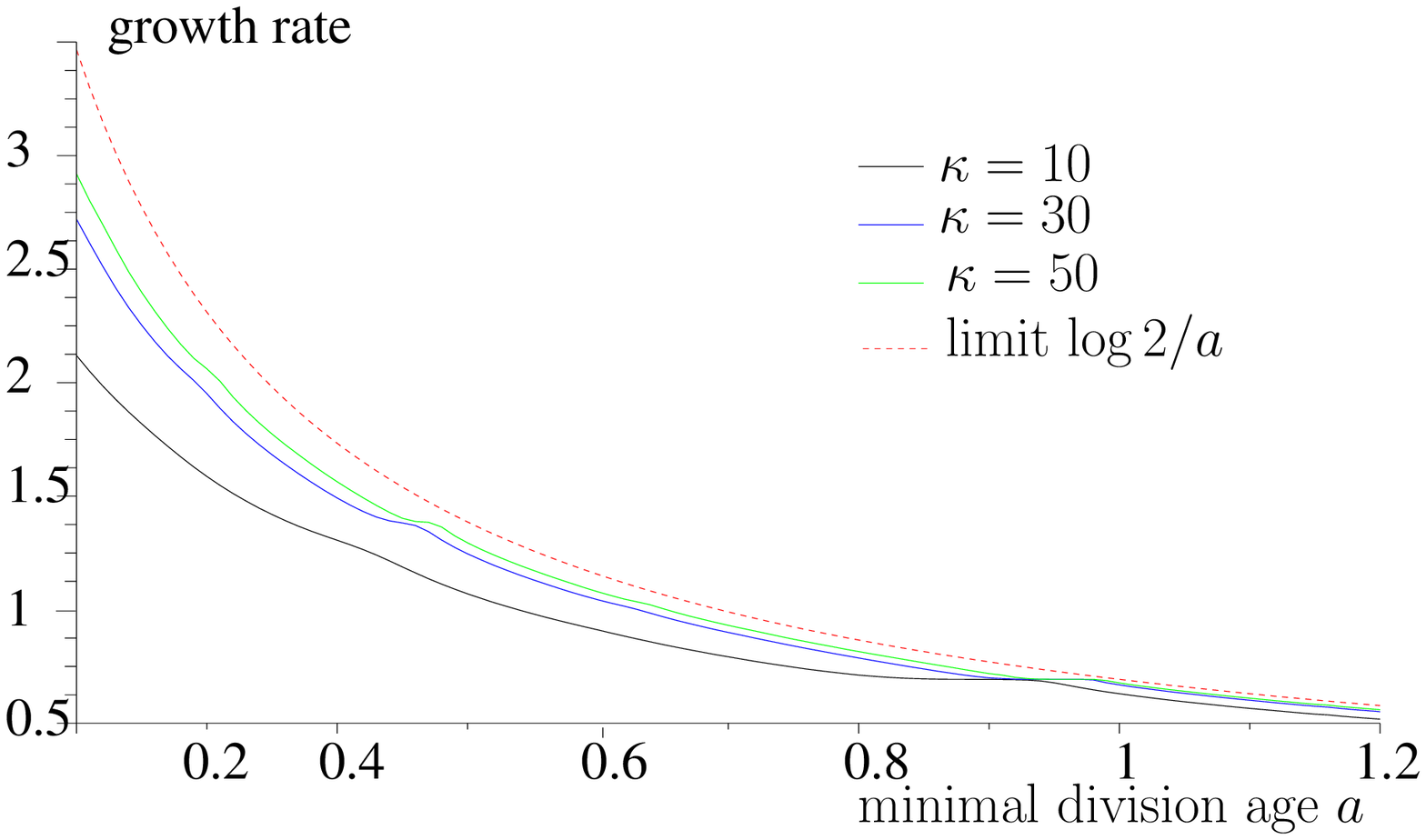}
\caption{Convergence of the Floquet eigenvalue to $\log 2/a$.}	\label{fig:limit152050}
\end{figure}

%
%
%

\subsection{\texorpdfstring{When $\psi$  can vanish: proof of Theorem~\ref{Th:devil}}{When psi  can vanish: proof of Theorem 1.2}}

When $\psi$ can vanish, the conclusion of Theorem~\ref{Th:log2sura} fails; its proof uses the assumption that $\min(\psi)>0$. 
Then, we next show that more complex behaviors appear. In particular, the limit of $\lambda_F^k$ can be a discontinuous,
staircase like, function of the majority age $a$, and we
shall see that the limits $\scaling\rightarrow +\infty$ and $\psi\rightarrow 0$ (locally) may not commute.
\subsubsection{Intuitive derivation of the formula via a deterministic jump Markov process}\label{sec:derivation}
%
%
We consider the original PDE (\ref{eq:periodic_division}),  taking $T=1$,
\begin{equation}\label{eq:Malthus_d=0}
\left\lbrace\begin{array}{l}
\frac{\partial}{\partial t}n(t,x) + \frac{\partial}{\partial x}n(t,x) 
+\scaling\psi(t)\bbbone_{[a,\infty[}(x)n(t,x) = 0, \\[0.3cm] 
n(t,0) = 2\scaling\psi(t)\int_{a}^{\infty}n(t,x)dx.
\end{array}\right.
\end{equation}
where $\psi$ is a 1-periodic square wave,  such that if $\floor{t}$ is the integer part of $t$,
\begin{equation}\label{as:psi_square_wave}
\exists \, 0<\tau <1,\quad \psi= t\mapsto \bbbone_{[0,\tau[}(t-\floor{t}).
\end{equation} 

We next derive the limit of the growth
rate $\lambda$ when $\scaling\rightarrow+\infty$ (within this section, $\lambda$ denotes the Floquet eigenvalue $\lambda_F^\scaling$, thought of as a function
of $a$ and $\psi$). This derivation will be purely formal for the moment, 
leading to an ansatz the validity of which will be proved in the next sections.

Intuitively, whenever the cell is old enough in the cycle ($x>a)$ and
the time is favorable ($0\leq t-\floor{t}\leq \tau$), a division may occur.
The idea is to consider that at time $t=0$, we have only cells 
of age $0$ ($n(0,\cdot)=\delta_0$), and to look formally
for an asymptotic solution 
\[
n(t,\cdot) \sim  2^{m(t)} \delta_{\alpha(t)} , \qquad \kappa \to \infty
\]
of the PDE~\eqref{eq:Malthus_d=0}, meaning that the population 
at time $t$ consists mostly of $2^{m(t)}$ individuals all of age $\alpha(t)$,
as $\kappa\to\infty$. 
If for some integer time $K$, we have $\alpha(K)=0$, 
so that $n(K,\cdot) \sim 2^{m(K)}\delta_0$, using the time-periodicity
and linearity of~\eqref{eq:Malthus_d=0}
we may expect 
that $n(p K,\cdot) \sim  2^{p m(K)}\delta_0$, for all integers
$p\geq 1$. Commuting the limits as $\kappa\to\infty$ and $p\to\infty$, we arrive at the following expression
for the limit of the growth rate as $\kappa \to \infty$,
\begin{align}
\label{e-def-lambdainfty}
\lambda^\infty(a):=
 \frac{m(K)}{K \log 2}
= \lim_{t \to \infty} \frac{m(t)}{t \log 2} \enspace .
\end{align}
The idea is now that the trajectory $(\alpha(t))_{t\geq 0}$, representing
the age of the population, is produced
by a degenerate (deterministic) time-dependent \cadlag\ jump process, in which 
the state space is $\mathbb{R}_+$, and a particle moves to the right
with unit speed, untill it reaches an age 
and a time at which division is permitted,
meaning that $\alpha(t)\geq a$ and $t \in [0,\tau[+ \mathbb{N}$.
At every time satisfying this condition, it jumps to point $0$
(age is reset to zero and a division occurs).
 The number $m(t)$ counts the number of
jumps up to time $t$ and represents the number of divisions
of the cell which occured up to that time.

It suffices for our argument to consider the case in which
$\alpha(0)=0$. Then, the division times can  be readily computed:
if we start from age $0$ at time $0$, 
then, a division should occur at every instant
$a,2a,\dots, ka$ as long as it is permitted,
meaning that $a,2a,\dots,ka\in [0,\tau[ + \mathbb{N}$. 
Let
\begin{align}
K_a:=  \inf \{ k\in \mathbb{N} ;\; ka \in [\tau,1[ + \mathbb{N} \} \enspace,
\label{e-def-ka}
\end{align}
so that $K_aa$ is the first time at which a division is not permitted,
or $K_a = \infty$.
When, for the first time, we reach a possible division time $K_aa$ that is not permitted, then the corresponding division occurs later, namely at the next period : $\lceil K_aa\rceil$, where $\lceil \cdot\rceil$ denotes the upper integer part. Then, at time $\lceil K_aa\rceil $ the population divides (for the $K_a$th time) and we have $n(\lceil K_aa\rceil ,\cdot)=2^{K_a}\delta_0$. Therefore, \eqref{e-def-lambdainfty}
yields
\begin{align}
e^{\lambda^\infty(a)\lceil K_aa\rceil }=2^{K_a},\qquad\lambda^\infty(a)=\frac{K_a}{\lceil K_aa\rceil}\log 2 \enspace .
\label{e-def2-lambdainfty}
\end{align}
When $K_a=\infty$, we shall adopt the convention that $\lambda^\infty(a)=(\log 2 )/a$. 

This is illustrated in Figure~\ref{fig:devil}.

\begin{figure}[!h]\label{fig:devil}
\begin{center}\includegraphics[scale=0.4]{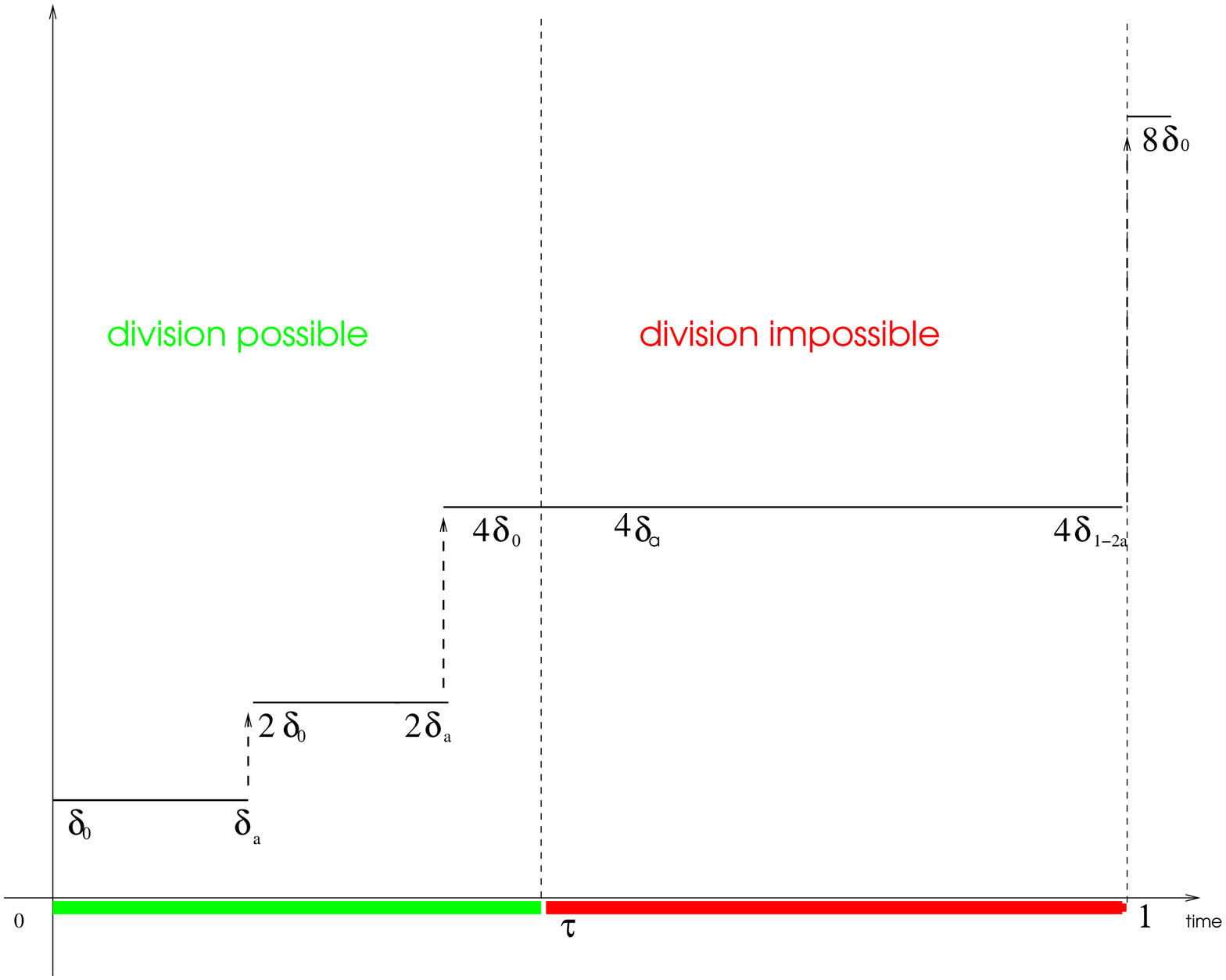}
\end{center}
\caption{Example of the derivation of the formula. Here $2a<\tau<3a<1$ and we obtain $n(1,\cdot)=8n(0,\cdot)$. Hence we expect
$\lambda^{\infty}(a)=3\log(2)$}
\end{figure}

%
%
%

%
%

%
%
%

Working with a semi-open interval $[\tau,1[$ in~\eqref{e-def-ka} is 
consistent with the \cadlag\ nature of the process $\alpha(t)$, however,
it leads to the possibility that $K_a=\infty$, which
is somehow unpleasant. Hence, we will use
in the sequel an equivalent formulation of $\lambda^\infty$, obtained
by replacing $[\tau,1[$ by the closed interval $[\tau,1]$,
\[
N_a:=  \min \{ k\in \mathbb{N} ;\; ka \in W_\tau  \} \enspace,
\qquad 
W_\tau:= [\tau,1] + \mathbb{N} \enspace.
\]
\begin{lemma}\label{lemma-reformulate}
We have $N_a<\infty$. Moreover, 
\begin{align}
\lambda^\infty(a) = \frac{N_a}{\lceil N_a a\rceil} \log 2 \enspace.
\label{e-nicelambdainfty}
\end{align}
\end{lemma}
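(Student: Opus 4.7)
The key observation is that $W_\tau = [\tau,1]+\mathbb{N}$ and $[\tau,1)+\mathbb{N}$ differ only by the set of positive integers: $W_\tau \setminus ([\tau,1)+\mathbb{N}) = \mathbb{N}^*$. Thus $N_a \leq K_a$ always, and whenever $K_a < \infty$ with $N_a = K_a$, formula \eqref{e-nicelambdainfty} is literally \eqref{e-def2-lambdainfty}. My plan is to reduce the lemma to the remaining two cases: (a) $N_a < K_a$ with $K_a$ finite, which I will rule out, and (b) $N_a < K_a$ with $K_a = \infty$, where the convention $\lambda^\infty(a)=(\log 2)/a$ must be shown to match the closed-form expression.

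For the finiteness of $N_a$, I would split on the arithmetic nature of $a$. If $a = p/q$ in lowest terms, then $qa = p \in \mathbb{N}^* \subset W_\tau$, giving $N_a \leq q < \infty$. If $a$ is irrational, Weyl equidistribution of $(ka \bmod 1)_{k\geq 1}$ implies $ka \bmod 1 \in [\tau,1)$ for some $k$, so $K_a < \infty$ and hence $N_a \leq K_a < \infty$.

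Assume now $N_a < K_a$. By the set decomposition this forces $N_a a \in \mathbb{N}^*$, so $a$ is rational; write $a = p/q$ in lowest terms. Minimality then forces $N_a = q$, since $ka \in \mathbb{N}$ with $k<q$ would give $pk/q \in \mathbb{Z}$, contradicting $\gcd(p,q)=1$. I would next argue $K_a = \infty$: the set $\{ka \bmod 1 : 1 \leq k \leq q-1\}$ equals $\{1/q, 2/q, \ldots, (q-1)/q\}$ because multiplication by $p$ is a bijection on $\mathbb{Z}/q\mathbb{Z}$, so if some value among them lay in $[\tau,1)$, the corresponding $k < q = N_a$ would satisfy $ka \in W_\tau$, contradicting the minimality of $N_a$; since $qa \in \mathbb{N}$ gives $qa \bmod 1 = 0 \notin [\tau,1)$, and the orbit is $q$-periodic, this rules out all $k$ and yields $K_a = \infty$. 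Case (a) is therefore impossible. In case (b), \eqref{e-nicelambdainfty} reads $N_a \log 2 / \lceil N_a a \rceil = q \log 2 / p = (\log 2)/a$, matching the convention.

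The main obstacle is this last case analysis: one has to simultaneously argue that $a$ must be rational with $N_a$ equal to its denominator \emph{and} that $K_a = \infty$, both hinging on the fact that $\gcd(p,q) = 1$ makes the orbit $(ka \bmod 1)$ hit each fraction $j/q$ exactly once per period. Everything else is a direct unpacking of the set identity $W_\tau \setminus ([\tau,1)+\mathbb{N}) = \mathbb{N}^*$.
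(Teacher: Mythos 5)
Your proof is correct and follows essentially the same route as the paper's: both reduce to the dichotomy $N_a a \notin \mathbb{N}$ (in which case $K_a=N_a$ directly) versus $N_a a \in \mathbb{N}$ (in which case the $N_a$-periodicity of the orbit $(ka \bmod 1)$ forces $K_a=\infty$, matching the convention $\lambda^\infty(a)=(\log 2)/a$), and both handle finiteness by density for irrational $a$ and via $qa\in\mathbb{N}\subset W_\tau$ for rational $a$. Your extra framing through $N_a\leq K_a$, the explicit identification $N_a=q$ for $a=p/q$ in lowest terms, and the detour through the bijection on $\mathbb{Z}/q\mathbb{Z}$ are harmless elaborations of the same underlying argument that the paper states more tersely.
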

\begin{proof}
If $a$ is irrational,
the finiteness of $N_a$
follows from the density of the sequence $ka$ modulo $1$, in $[0,1[$; 
whereas if $a$ is rational, we have $ka \in 1 +\mathbb{N}$ for
some integer $k$, so that $N_a\leq k$ is finite. 
It remains to show that the expressions~\eqref{e-def2-lambdainfty} and~\eqref{e-nicelambdainfty}
coincide.
Assume first that $N_a a\in [\tau,1[+\mathbb{N}$.
Then, by definition of $N_a$, $ka\in ]0,\tau[+\mathbb{N}$ for $1\leq k<N_a$,
and so, $ka\in [0,\tau[$ for $0\leq k<N_a$, which implies
that $K_a=N_a$. Assume now that $N_a a \in 1+\mathbb{N}$.
Then, we deduce from $ka\in ]0,\tau[+ \mathbb{N}$,
for $1\leq k<N_a$ that $(N_a+k) a \in ]0,\tau[+\mathbb{N}$,
for all $1\leq k<N_a$, and so $K_a=\infty$, which by our convention
regarding~\eqref{e-def2-lambdainfty} leads to~\eqref{e-nicelambdainfty}.
\end{proof}

In the remaining subsections, we show that $\lim_{\kappa\to\infty} \lambda^\kappa(a)$
does coincide with the function $\lambda^\infty(a)$ (Theorem~\ref{Th:devil}).
Throughout the proof, the parameter $\tau$ will be fixed.
The proof of the theorem relies on the following strategy:
\begin{enumerate}
\renewcommand{\theenumi}{\roman{enumi}}
\item prove that the ansatz $\lambda^\infty(a)$ (staircase formula) is an upper bound for $\lambda^{\scaling}(a)$, for every $\kappa>0$, (Section~\ref{sec:upper_bound});
\item derive technical properties of the limit and of $\lambda^\infty(a)$;
in particular monotonicity and right continuity, (Section~\ref{sec:technical});
\item assume that $a$ is not the right end of a step of $\lambda^\infty$, 
\item consider the generational formulation of the division equation with initial data $N^{\scaling}(0,x)$ and show that at time $\lceil N_a a\rceil$,
almost every individual has reached generation $N_a$ and deduce the convergence for such an $a$.
\item conclude that the limit of $\lambda^\kappa$ as $\kappa\to\infty$ is given by the staircase formula of $\lambda^\infty(a)$, for all $a>0$, using the fact that both $\lambda^\infty$ and $\lim_{\kappa \to \infty} \lambda^\kappa$ are right continuous functions of the parameter $a$.
\end{enumerate}

%
%
%

\subsubsection{\texorpdfstring{Preliminaries: technical properties of $\lambda^\infty$ and of $\lim_{\kappa\to\infty} \lambda^\kappa$}{Preliminaries}}\label{sec:technical}

We shall need a number of technical observations regarding the function
$\lambda^\infty$ defined in~\eqref{e-def2-lambdainfty}.

\begin{lemma}
The function $a\mapsto \lambda^\infty(a)$ is nonincreasing.
\end{lemma}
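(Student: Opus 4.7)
The plan is to realise $\lambda^\infty(a)$ as the asymptotic exponential rate of the division-counting function of the deterministic jump process introduced in Section~\ref{sec:derivation}, and to deduce monotonicity by a pathwise comparison in $a$. To set this up, I would introduce the ``next permissible time'' map
\[
J(x):=\min\{s\geq x\ :\ s\in [0,\tau)+\mathbb{N}\},\qquad x\geq 0.
\]
Because each component interval $[n,n+\tau)$ of $[0,\tau)+\mathbb{N}$ has positive length, the minimum is attained, and $J$ is visibly nondecreasing on $[0,\infty)$ (one checks $J(x)=x$ when $\{x\}\in [0,\tau)$ and $J(x)=\lceil x\rceil$ otherwise). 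Starting from age $0$ at time $0$, the successive division times of the \cadlag\ process are then given recursively by
\[
t_1(a)=J(a),\qquad t_k(a)=J\bigl(t_{k-1}(a)+a\bigr)\quad\text{for }k\geq 2,
\]
since after each division the cells restart at age $0$ and must wait until age $a$ before becoming eligible again.

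The first key step is to show by induction on $k$ that $a\mapsto t_k(a)$ is nondecreasing: the base case follows from the monotonicity of $J$, and the inductive step is immediate, since $t_k$ is then the composition of two nondecreasing maps. Define $N(t,a):=\#\{k\geq 1\ :\ t_k(a)\leq t\}$, the number of divisions strictly before time $t$. From the monotonicity of $a\mapsto t_k(a)$ we get that, for every fixed $t$, $a\mapsto N(t,a)$ is \emph{nonincreasing}.

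Next, I would identify $\lim_{t\to\infty}N(t,a)\log 2/t$ with $\lambda^\infty(a)$ via a periodicity argument. By the very definition of $N_a$, one verifies directly that $t_k(a)=ka$ for $1\leq k<N_a$ (each such instant lying in the permissible set) and $t_{N_a}(a)=\lceil N_a a\rceil$, an integer. Since integers lie in $[0,\tau)+\mathbb{N}$, the dynamics restarts from age $0$ at that time, so by $1$-periodicity of $\psi$ the sequence of jump times satisfies $t_{k+pN_a}(a)=t_k(a)+p\lceil N_a a\rceil$ for all $p,k\geq 0$. Hence $N\bigl(p\lceil N_a a\rceil,a\bigr)=pN_a$, and
\[
\lim_{t\to\infty}\frac{N(t,a)\log 2}{t}=\frac{N_a\log 2}{\lceil N_a a\rceil}=\lambda^\infty(a).
\]
Passing to the limit in the nonincreasing-in-$a$ relation for $N(t,a)/t$ yields the nonincreasingness of $\lambda^\infty$.

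I do not expect a real obstacle, only some bookkeeping: the delicate point is the endpoint convention for $W_\tau$ versus $[0,\tau)+\mathbb{N}$. One must check that integers are permissible (so that restart at time $\lceil N_a a\rceil$ is possible) and that the ``last'' admissible multiple $N_a a$ indeed lands in the closed interval $[\tau,1]+\mathbb{N}$, which is precisely the content of Lemma~\ref{lemma-reformulate} already at hand.
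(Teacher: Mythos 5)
Your proof is correct and follows essentially the same strategy as the paper's: realize $\lambda^\infty(a)/\log 2$ as the long-time average jump rate of the deterministic age process, show that the number of jumps up to time $t$ is a nonincreasing function of $a$ for every fixed $t$, and pass to the limit $t\to\infty$. The paper establishes the pointwise monotonicity of the jump count through a variational characterization (writing $m(t)$ as a supremum over admissible increasing sequences $a_1\leq\dots\leq a_k$ whose feasible set visibly shrinks as $a$ grows), whereas you prove it by explicitly constructing the jump times via the nondecreasing ``next permissible time'' map $J$ and showing $a\mapsto t_k(a)$ is nondecreasing by induction; both are valid, and yours has the small advantage of simultaneously furnishing the periodicity relation $t_{k+pN_a}=t_k+p\lceil N_a a\rceil$ needed to identify the asymptotic rate with $\lambda^\infty(a)$.
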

\proof
Let $(\alpha(t))_{t\geq 0}$ denote the trajectory of the 
jump process defined above, starting from $\alpha(0)=0$,
and let $m(t)$ denote the number of jumps (divisions) up to time $t$ (included).
We have
\begin{align*}
m(t)&=\sup\left\{k\in\mathbb{N};\;\exists 0\leq a_1\leq \dots\leq a_k\leq t,\; a_{i+1}-a_i\geq a,\; a_i\in [0,\tau[+\mathbb{N}\right\} \\
& \qquad\qquad \qquad\qquad\qquad\qquad\qquad\quad\;\text{(number of divisions since time $0$)}
\end{align*}
Since the constraint $a_{i+1}-a_i\geq a$ gets stronger as $a$ increases, it is then obvious that $m$ is a nonincreasing function of $a$. We have,
by definition of $\lambda^\infty$,
\beq 
\lim_{t\rightarrow+\infty}\frac{m(t)}{t}= \frac{N_a}{\lceil N_aa\rceil}=\frac{\lambda^\infty}{\log 2}.
\eeq
Since for every $t$, $m(t)$ is a nonincreasing  function of $a$, the same is true for $\lim_t m(t)/t = \lambda^\infty/\log 2$.\qed

\begin{lemma}
The limit $\ell(a)=\lim_{\scaling\rightarrow \infty} \lambda^{\scaling}(a)$ satisfies 
$$\ell(a)=\sup_{\scaling>0}\lambda^{\scaling}(a) \enspace,$$
and the map $a\mapsto \ell(a), $ is nonincreasing and right continuous.
\end{lemma}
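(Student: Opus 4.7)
The plan is to obtain all three assertions by combining the comparison principle (Theorem~\ref{th-comparison}) applied twice---once in $\kappa$, once in $a$---with a lower semicontinuity argument.

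\textbf{Step 1 (identification $\ell=\sup$).} The division rate $K^\kappa(t,x)=\kappa\psi(t)\bbbone_{[a,\infty)}(x)$ is a product, whose spatial factor $\bbbone_{[a,\infty)}$ is nondecreasing; hence it satisfies the structural condition of Theorem~\ref{th-comparison}. Applied to two values $\kappa_1\leq\kappa_2$, that theorem gives $\lambda^{\kappa_1}(a)\leq\lambda^{\kappa_2}(a)$, so $\kappa\mapsto\lambda^\kappa(a)$ is nondecreasing and the limit coincides with the supremum. Finiteness is ensured by the bound $\lambda^\kappa(a)\leq\log 2/a$, itself obtained by majorizing $\psi\leq 1$ via Theorem~\ref{th-comparison} and then computing the Floquet eigenvalue explicitly for the constant modulation (as done at the beginning of the proof of Proposition~\ref{Th:log2sura}).

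\textbf{Step 2 (monotonicity in $a$).} Fix $\kappa$ and pick $a_1\leq a_2$. Then pointwise $K^\kappa_{a_1}\geq K^\kappa_{a_2}$, and $K^\kappa_{a_1}$ still satisfies the structural condition. A second application of Theorem~\ref{th-comparison} gives $\lambda^\kappa(a_1)\geq\lambda^\kappa(a_2)$; passing to the supremum over $\kappa$ preserves the inequality, so $\ell(a_1)\geq\ell(a_2)$.

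\textbf{Step 3 (right continuity).} The key observation is that $\ell$, being a supremum of continuous functions of $a$, is lower semicontinuous; combined with monotonicity this forces right continuity. Indeed, for $a'\geq a$, monotonicity gives $\ell(a')\leq\ell(a)$, so $\limsup_{a'\to a^+}\ell(a')\leq\ell(a)$; lower semicontinuity gives $\liminf_{a'\to a}\ell(a')\geq\ell(a)$, and putting the two bounds together yields $\lim_{a'\to a^+}\ell(a')=\ell(a)$. It therefore suffices to show that, for each fixed $\kappa$, the map $a\mapsto\lambda^\kappa(a)$ is continuous. This is the main obstacle. The cleanest route is to solve~\eqref{eq:Floquet_B} along characteristics: for $x\geq a$ one integrates $\partial_tN+\partial_xN+(\lambda+\kappa\psi(t))N=0$, and the boundary condition together with $N(t,a)=N(t-a,0)e^{-\lambda a}$ reduces the problem to a periodic renewal equation for $M(t):=N(t,0)$ whose kernel depends continuously on $a$. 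The Floquet eigenvalue is then characterized as the unique scalar for which this renewal operator has spectral radius one, and standard perturbation theory for compact positive operators (Perron--Frobenius plus simplicity of the leading eigenvalue, as in the Krein--Rutman framework invoked in the appendix) yields continuous dependence of $\lambda^\kappa$ on $a$. Plugging this into the lower semicontinuity argument above completes the proof.
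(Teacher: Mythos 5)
Your proof follows essentially the same strategy as the paper's: use the comparison principle (Theorem~\ref{th-comparison}) to get monotonicity in $\kappa$ (hence $\ell=\sup$), use it again to get monotonicity in $a$, and deduce right continuity from lower semicontinuity of a supremum of continuous functions combined with monotonicity. The one place where you go beyond the paper is Step~3: the paper simply asserts that $a\mapsto\lambda^\kappa(a)$ is continuous for fixed $\kappa$, while you sketch a justification via the characteristic reduction to a renewal operator whose kernel (cf.\ $\LL_2^\lambda$ in the appendix) depends continuously on $a$, together with simplicity of the Krein--Rutman eigenvalue; that sketch is reasonable and fills in a gap that the paper leaves implicit.
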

\begin{proof}
Since $\lambda^{\scaling}(a)\leq \log 2/a$, and 
since, by the comparison principle (Theorem~\ref{th-comparison}),
$\lambda^{\scaling}(a)$ is a nondecreasing function of $\scaling$,
$$\ell(a)=\sup_{\scaling>0}\lambda^{\scaling}(a) \leq \frac{\log 2}a \enspace .$$
Furthermore, since for any fixed $\scaling$, $\lambda^{\scaling}(a)$ is nonincreasing with $a$, so is $\ell(a)$. Finally, since for any fixed $\scaling>0$, $\lambda^{\scaling}(a)$ is continuous with respect to $a$, $\ell(a)$, which
is a supremum of a family of lower semi-continuous functions, is lower semi-continuous. Combined with monotonicity, this leads to the convenient property:
$$\forall a>0,\qquad \ell(a)=\liminf_{x\rightarrow a} \ell(x)=\lim_{x\rightarrow a+0}\ell(a).$$
That is, $\ell(a)=\lim_{\scaling\rightarrow \infty} \lambda^{\scaling}(a)$ is right continuous.\end{proof}

\begin{lemma}
The function $a\mapsto\lambda^\infty(a)$ 
is  right continuous. 
\end{lemma}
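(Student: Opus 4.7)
The plan is to exploit the combinatorial formula
$\lambda^\infty(a) = (N_a/\lceil N_a a\rceil)\log 2$ and split into
two cases according to whether $N_a a$ is an integer. Throughout
write $\{y\} := y - \lfloor y\rfloor$ for the fractional part. The
common starting observation is that, by minimality of $N_a$,
$\{ja\} \in [0, \tau)$ for every $0 \leq j \leq N_a - 1$; hence for
$a' = a + \epsilon$ with $\epsilon > 0$ smaller than
$\min_{1 \leq j < N_a}(\tau - \{ja\})/j$, one still has
$\{ja'\} \in [0, \tau)$ for those $j$, so $N_{a'} \geq N_a$.

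If $N_a a \notin \mathbb{N}$, so $\{N_a a\} \in [\tau, 1)$, then for
$\epsilon$ small $\{N_a a'\} = \{N_a a\} + N_a \epsilon$ still lies in
$[\tau, 1)$, whence $N_a a' \in W_\tau$ and
$\lceil N_a a'\rceil = \lceil N_a a\rceil$. Combined with $N_{a'} \geq N_a$
this forces $N_{a'} = N_a$ and $\lambda^\infty(a') = \lambda^\infty(a)$:
the function $\lambda^\infty$ is locally constant to the right of such
an $a$, and right continuity is immediate.

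The interesting case is $N_a a = n + 1 \in \mathbb{N}$, so
$\lambda^\infty(a) = (\log 2)/a$. Then $N_a a' = n + 1 + N_a \epsilon$
lies in the forbidden interval $(n+1, n+1+\tau)$ for small $\epsilon$,
so $N_{a'} > N_a$. Using $N_a a \in \mathbb{Z}$, one has $\{ka\} = \{ja\}$
whenever $k = p N_a + j$ with $0 \leq j < N_a$, hence
$\{k a'\} = \{ja\} + k\epsilon$ as long as this stays below $1$. Setting
$c := \min_{0 \leq j < N_a}(\tau - \{ja\}) \in (0, \tau]$, for
$k\epsilon < c$ one has $\{ka'\} < \tau$ and no wrap (see the obstacle
below), so $N_{a'} \geq \lceil c/\epsilon\rceil$. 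Conversely, picking
$j^\star$ attaining $c$ and the smallest $k^\star \geq 1$ with
$k^\star \equiv j^\star \pmod{N_a}$ and $k^\star\epsilon \geq c$ yields
$k^\star \leq c/\epsilon + N_a$ and $\{k^\star a'\} \geq \tau$, so
$N_{a'} \leq c/\epsilon + N_a$. Thus $N_{a'} = c/\epsilon + O(1)$,
$\lceil N_{a'} a'\rceil = a N_{a'} + N_{a'}\epsilon + O(1) = ca/\epsilon + O(1)$, and
\[
\frac{\lambda^\infty(a')}{\log 2} = \frac{N_{a'}}{\lceil N_{a'} a'\rceil}
\xrightarrow[\epsilon\to 0^+]{} \frac{1}{a} = \frac{\lambda^\infty(a)}{\log 2}.
\]
An occasional accidental integer hit $ka' \in \mathbb{N}$ only makes
$\lambda^\infty(a')$ equal to $(\log 2)/a'$, which tends to the same limit.

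The main obstacle is the modular arithmetic in the integer case: one
has to verify that when $\{ja\} + k\epsilon$ exceeds $1$, the wrapped
value $\{ja\} + k\epsilon - 1$ lies in $[0, \tau)$ and cannot be a
premature member of $W_\tau$. This follows from $k\epsilon < c \leq \tau$
and $\{ja\} < \tau$, which give $\{ja\} + k\epsilon < 2\tau < 1 + \tau$.
Once this verification is in place, the two-sided bounds on $N_{a'}$
and the resulting limit are routine, and the two cases combine to give
right continuity of $\lambda^\infty$ at every $a > 0$.
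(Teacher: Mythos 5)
Your proof is correct and follows the same case split as the paper, distinguishing $N_a a \notin \mathbb{N}$ (where one shows $\lambda^\infty$ is locally constant to the right, since $N_{a'}=N_a$ and $\lceil N_{a'}a'\rceil=\lceil N_aa\rceil$ for $a'$ slightly larger than $a$) from $N_a a \in \mathbb{N}$ (where one shows $N_{a'}\to\infty$ as $a'\to a^+$, forcing $\lambda^\infty(a')\to(\log 2)/a$). The paper merely asserts $f(a+\varepsilon)\to+\infty$ in the integer case without justification, whereas you supply the explicit two-sided estimate $\lceil c/\varepsilon\rceil\leq N_{a+\varepsilon}\leq c/\varepsilon+N_a$ obtained from the $N_a$-periodicity of $k\mapsto\{ka\}$; note also that the ``wrap'' you worry about in your final paragraph cannot actually occur in the range $k\varepsilon<c$, since the sharper bound $\{ja\}+k\varepsilon<\{ja\}+c\leq\tau<1$ already rules it out.
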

\begin{proof}
By Lemma~\ref{lemma-reformulate}, 
$$
N_a a=\min W_\tau\cap\mathbb{N}a=:f(a),\quad \lambda^\infty(a)=\frac{N_a}{\lceil N_a a\rceil}\log 2=\frac{f(a)}{a\lceil f(a)\rceil}\log 2,
$$
where we just denote $f(a)=N_aa$. 
To show that the function $\lambda^\infty$ is right continuous, it suffices to 
prove that
$$g(a)=\frac{f(a)}{\lceil f(a)\rceil},$$
is right continuous. If $g(a)\not=1$, this means that $f(a)\not=\lceil f(a)\rceil$, and therefore, for $\varepsilon>0$ small enough, we can have $n(a+\varepsilon)\in [\lfloor na\rfloor, \lfloor na\rfloor+\tau[$ for all $n<N_a$ and  $N_a=N_{a+\varepsilon}$, $f(a+\varepsilon)=f(a)+N_a\varepsilon$ and $\lceil f(a)\rceil=\lceil f(a+\varepsilon)\rceil$. 
Thereby, for $\varepsilon>0$ small enough, we have $g(a+\varepsilon)=g(a)+\frac{N_a}{\lceil f(a)\rceil}\varepsilon$, which leads to the right continuity of $g$ at $a$. If $g(a)=1$, then, we can check that 
$$
\lim_{\varepsilon\to 0^+}f(a+\varepsilon)=+\infty.
$$
and so
$$
\lim_{\varepsilon\to 0^+}g(a+\varepsilon)=1.
$$
\end{proof}

These observations leads to the following useful property (stating basically that $\lambda^\infty$ is piecewise constant)
\begin{lemma}\label{lem:interval}
For all $a>0$, $I_a=\{\lambda^\infty(x)=\lambda^\infty(a)\}$ is an interval with nonempty interior. Moreover, $\inf I_a\in I_a$. Finally, if we denote 
$$E_\tau:\{a>0,\;a=\sup I_a\}\enspace ,$$ we have 
\begin{itemize}
\item if $a\in E_\tau$ then $\lambda^\infty$is continuous at $a$,
\item if $a\in E_\tau$ then  $N_aa$ is an integer.
\end{itemize}
\end{lemma}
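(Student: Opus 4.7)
The plan is to exploit the explicit formula $\lambda^\infty(a) = \frac{N_a}{\lceil N_a a\rceil}\log 2$ from Lemma~\ref{lemma-reformulate} and to track how $N_x$ and $\lceil N_x x\rceil$ vary for $x$ in a small neighborhood of $a$. The key structural remark is that any element of $W_\tau = [\tau,1] + \mathbb{N}$ (with $\mathbb{N}=\{0,1,2,\dots\}$ here) is either a positive integer or lies in $(\tau,1) + \mathbb{N}$; consequently, by minimality of $N_a$, one has $ka \in (0,\tau) + \mathbb{N}$ \emph{strictly} for every $1 \leq k < N_a$, since $ka$ cannot be an integer without already belonging to $W_\tau$.

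The easy assertions follow at once. Since $\lambda^\infty$ is nonincreasing, the level set $I_a$ is an interval. Setting $\alpha := \inf I_a$, monotonicity gives $\lambda^\infty(\alpha) \geq \lambda^\infty(a)$; choosing a sequence $x_n \downarrow \alpha$ with $x_n \in I_a$ (possible by definition of $\alpha$), right continuity of $\lambda^\infty$ yields $\lambda^\infty(\alpha) = \lim_n \lambda^\infty(x_n) = \lambda^\infty(a)$, so $\alpha \in I_a$.

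For the interior and the characterization of $E_\tau$, I split according to whether $N_a a$ is an integer. In Case~A ($N_a a = n + r$ with $r \in [\tau,1)$), write $ka = n_k + s_k$ with $s_k \in (0,\tau)$ for each $k < N_a$. For $\epsilon > 0$ small enough, $s_k + k\epsilon \in (0,\tau)$ for every $k < N_a$ and $r + N_a\epsilon \in [\tau,1)$, so $k(a+\epsilon) \notin W_\tau$ for $k<N_a$ while $N_a(a+\epsilon) \in W_\tau$. Hence $N_{a+\epsilon} = N_a$ and $\lceil N_a(a+\epsilon)\rceil = n+1 = \lceil N_a a\rceil$, giving $\lambda^\infty(a+\epsilon) = \lambda^\infty(a)$; thus $[a,a+\delta] \subset I_a$, the interior is nonempty, and $a \neq \sup I_a$, so $a \notin E_\tau$. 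In Case~B ($N_a a = m \in \mathbb{N}$, $m \geq 1$), the same bookkeeping applied to $x=a-\epsilon$ gives $N_a(a-\epsilon) = m - N_a\epsilon \in (m-1+\tau, m) \subset W_\tau$ and $k(a-\epsilon) \notin W_\tau$ for $k<N_a$, so $N_{a-\epsilon} = N_a$, $\lceil N_a(a-\epsilon)\rceil = m$, and $[a-\delta,a] \subset I_a$; the interior is again nonempty. For $x > a$, the trivial bound $\lambda^\infty(x) \leq (\log 2)/x$ combined with $\lambda^\infty(a) = (N_a/m)\log 2 = (\log 2)/a$ gives $\lambda^\infty(x) < \lambda^\infty(a)$, so $x \notin I_a$; hence $\sup I_a = a$ and $a \in E_\tau$.

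The two bullet points follow directly: Case~A shows that $a \in E_\tau$ forces $N_a a$ to be an integer, which is the second bullet; and for continuity at such an $a$, right continuity is the previous lemma while left continuity stems from the local constancy on $[a-\delta,a]$ established in Case~B. The main technical obstacle, I expect, is the careful tracking of strict versus non-strict inclusions in $W_\tau$ — in particular the observation that the minimality of $N_a$ forbids $ka$ ($k < N_a$) from being an integer, which is exactly what allows the strict inclusion $ka \in (0,\tau) + \mathbb{N}$ to persist under small perturbations of $a$.
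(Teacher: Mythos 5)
Your proof is correct, and it takes a genuinely cleaner route than the paper's for the second bullet (that $a\in E_\tau$ forces $N_a a\in\mathbb{N}$). The paper proves this by contradiction: assuming $\lceil N_a a\rceil-1+\tau\leq N_aa<\lceil N_aa\rceil$, it shows that for $a'\in(a,a+\varepsilon)$ both $N_{a'}$ and $\lceil N_{a'}a'\rceil$ are bounded by their values at $a$, so $\lambda^\infty(a')$ lies in a finite lattice of rationals times $\log 2$; if in addition $a\in E_\tau$ then $\lambda^\infty$ is continuous at $a$ (Step 2 of the paper's proof), yet any nonzero change of $\lambda^\infty$ is bounded below by $(\log 2)/\lceil N_aa\rceil^2$ — contradiction. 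Your argument avoids this quantized-jump estimate entirely: you show directly, via the same strict-containment bookkeeping, that when $N_aa\notin\mathbb{N}$ a right neighborhood $[a,a+\delta]$ lies in $I_a$, so $a\neq\sup I_a$; and when $N_aa=m\in\mathbb{N}$, the identity $\lambda^\infty(a)=(\log 2)/a$ combined with the elementary bound $\lambda^\infty(x)\leq(\log 2)/x$ gives $\lambda^\infty(x)<\lambda^\infty(a)$ for all $x>a$, hence $\sup I_a=a$. This is a more self-contained argument (it does not lean on Step 2, and dispenses with the lower bound on jumps), and it simultaneously yields both bullets and the nonempty-interior claim in one case analysis. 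The one small thing to make explicit when polishing: the two cases $N_aa\in\mathbb{N}$ and $N_aa\in[\,\tau,1)+\mathbb{N}$ are exhaustive because $N_aa\in W_\tau=\bigcup_{n\geq 0}[n+\tau,n+1]$ and the right endpoints $n+1$ are exactly the integers; you use this but it is worth stating.
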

We shall see that the set $E_\tau$ contains exactly the right ends of steps of $\lambda^\infty$ where no jump occur. An infinite number of steps accumulate at the right of every point of $E_\tau$. On Figure~\ref{fig:figure_devil_2tiers}, we can see for instance that $1/2$ belongs to $E_{2/3}$
\begin{proof}
{\em Step 1: $I_a$ is an interval}

Firstly, this set is necessary an interval by monotonicity : if $a'\in I_a$, then for any convex combination of $a,a'$, we have $$\lambda^\infty(a)=\lambda^\infty(\max(a,a'))\leq \lambda^\infty(\theta a+(1-\theta)a')\leq\lambda^\infty(\max(a,a'))=\lambda^\infty(a). $$
To prove that it is nontrivial, we use the definition of $N_a$. By definition $ a, 2 a ,\dots (N_a-1)a\not\in W_\tau$. As $W_\tau$ is a closed set, its complementary is open and thus for $\varepsilon>0$ small enough, 
$$
]a-\varepsilon,a+\varepsilon[\cap W_\tau=]2(a-\varepsilon),2(a+\varepsilon)[\cap W_\tau=](N_a-1)(a-\varepsilon),(N_a-1)(a-\varepsilon)[\cap W_\tau=\emptyset.
$$
Finally, for $\varepsilon>0$ small enough, we have necessarily $]N_a(a-\varepsilon),N_a a]\subset W_\tau$ or $[N_a a,N_a(a+\varepsilon)[\subset W_\tau$ (possibly both). Thereby, for any $a$, there exists $\varepsilon>0$ such that $[a,a+\varepsilon[$ or $]a-\varepsilon,a]$ is contained in $I_a$ which has then a nonempty interior (contains an interval of size $\varepsilon$). The fact that $\inf I_a$ belongs to $I_a$ is just a consequence of right continuity.

\medskip\noindent{\em Step 2: continuity on $E_\tau$}

The continuity follows from right continuity of $\lambda^\infty$ and the fact that in this case,  $\lambda^\infty(\sup I_a)=\lim_{x\rightarrow \sup I_a -0} \lambda^\infty(x)$, that is $\lambda^\infty$ is also left continuous. 

\medskip\noindent{\em Step 3: $N_aa=\lceil N_a a \rceil$, forall $a\in E_\tau$}

Assume that $a$ satisfies  $\lceil N_a a \rceil-1+\tau \leq N_aa<\lceil N_a a \rceil$. Then, for some $\varepsilon>0$, we have $\lceil N_a a \rceil-1+\tau\leq N_aa'<\lceil N_a a \rceil$ for any $a'\in ]a,a+\varepsilon[$. Thereby, we can claim by definition of $N_a$  that
$$N_{a'}\leq N_a, \qquad\lceil N_{a'} a' \rceil\leq \lceil N_{a}a\rceil.$$ Suppose now that in addition that $a\in E_\tau$. By definition, as $a=\sup I_a$, we have $\lambda^\infty(a')\not=\lambda^\infty(a)$ for $a'\in]a,a+\varepsilon[$. Furthermore for $a'\in]a,a+\varepsilon[$, $\lambda^\infty$ takes its values in 
$$
\left\{\frac{N}{p}\log 2,\quad N\leq N_a,\quad p\leq \lceil N_a a \rceil\right\},
$$
therefore, for $a'\in]a,a+\varepsilon[$, we have $$|\lambda^\infty(a)-\lambda^\infty(a')|\geq \frac{1}{(\lceil N_a a \rceil)^2}\log 2$$ and this contradicts the continuity of $\lambda^\infty$ at $a$.\end{proof}

The set $E_\tau$ has a last interesting property. 
\begin{lemma}\label{lem:discrete}
The set $E_\tau$ is discrete.
\end{lemma}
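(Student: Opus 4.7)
The plan is to show that every $a \in E_\tau$ is a rational $a = M/N$ (in lowest terms) whose denominator is bounded by a constant depending only on $\tau$; the set of such rationals is manifestly discrete in $(0,\infty)$.

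First, I would invoke Lemma~\ref{lem:interval}: if $a\in E_\tau$, then $M:=N_a\,a$ is a positive integer, so one may write $a = M/N$ with $N := N_a$. A quick check then shows that $\gcd(M,N) = 1$: indeed, if $d = \gcd(M,N) > 1$, then $(N/d)a = M/d$ is a positive integer, hence lies in $W_\tau$, contradicting the minimality of $N_a = N$.

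Next, I would exploit the minimality of $N_a$: the multiples $ja = jM/N$ lie outside $W_\tau$ for $j = 1,\dots,N-1$. Since $\gcd(M,N) = 1$, none of these numbers is an integer, so $jM/N \in W_\tau$ is equivalent to the fractional part condition $\{jM/N\} \in [\tau,1)$. Hence $\{jM/N\} < \tau$ for every $j = 1,\dots,N-1$. But the coprimality of $M$ and $N$ makes $j \mapsto jM \bmod N$ a bijection of $\{1,\dots,N-1\}$, so the fractional parts $\{jM/N\}$ take each of the values $1/N, 2/N, \dots, (N-1)/N$ exactly once. In particular, $(N-1)/N < \tau$, which rearranges to the key bound $N < 1/(1-\tau)$.

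To conclude, set $K := \lfloor 1/(1-\tau)\rfloor$; then $E_\tau \subseteq \{M/N : M,N \in \mathbb{N}_{\geq 1},\; N \leq K\}$. Any bounded subinterval of $(0,\infty)$ contains only finitely many fractions whose denominator is at most $K$, so $E_\tau$ has no accumulation point in $(0,\infty)$, and is therefore discrete. I do not anticipate a serious obstacle; the only subtleties are the careful bookkeeping of $W_\tau$ at integer boundaries (since $W_\tau=[\tau,1]+\mathbb{N}$ contains both endpoints of each component) and checking that the reduction of $M/N$ to lowest terms does not alter $N_a$, which is precisely what the $\gcd$ argument in the second paragraph secures.
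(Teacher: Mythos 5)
Your proof is correct, and it takes a genuinely different route from the paper's. The paper argues \emph{locally}: starting from $a\in E_\tau$ it picks an auxiliary integer $m$ with $\tau\leq 1-1/m$, sets $\varepsilon=1/(2N_a m)$, and shows by a direct computation (choosing an integer $k$ with $kN_a\varepsilon'\leq 1<(k+1)N_a\varepsilon'$) that for any $a'\in\,]a,a+\varepsilon[$ some multiple $(k-1)N_a a'$ lands in $\operatorname{int}(W_\tau)$, which is incompatible with $a'\in E_\tau$; combined with the step structure on $I_a$, this yields an isolated neighbourhood of $a$. You argue \emph{globally}: from Lemma~\ref{lem:interval} you get $N_a a\in\mathbb{N}$, then you prove $\gcd(N_a a,N_a)=1$ from the minimality of $N_a$, and the bijectivity of $j\mapsto jN_a a\bmod N_a$ on $\{1,\dots,N_a-1\}$ forces $(N_a-1)/N_a<\tau$, i.e.\ $N_a<1/(1-\tau)$. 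This yields the strictly stronger conclusion that $E_\tau$ is contained in the set of positive rationals with denominator at most $\lfloor 1/(1-\tau)\rfloor$, from which discreteness is immediate. Your version is shorter, avoids the $\varepsilon$-bookkeeping, and delivers a quantitative bound on the denominators of the accumulation-free set that the paper's local argument does not make explicit; the paper's version, on the other hand, is elementary in the sense that it never appeals to modular arithmetic. One small presentational point: you should note that the case $N_a=1$ is vacuous for the fractional-part argument (the bound $1<1/(1-\tau)$ then holds simply because $\tau>0$), and that when $1/(1-\tau)$ is itself an integer the strict inequality $N<1/(1-\tau)$ still gives $N\leq\lfloor 1/(1-\tau)\rfloor$; both are harmless.
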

\begin{proof}
Let $a\in E_\tau$.
Thanks to Lemma~\ref{lem:interval}, we know that $\lambda^\infty$ is constant on $I_a$ and therefore $a'\not \in E_\tau$ for $a'\in]\inf I_a,a[$. It remains to prove the existence of $\varepsilon>0$ such that $]a,a+\varepsilon[\cap E_\tau=\emptyset$. 

To prove this,
let $m\in \mathbb{N}\setminus\{0\}$ satisfy $\tau\leq 1-\frac{1}{m}$. Choose  $\varepsilon=\frac{1}{2N_a m}$. Let $a'=a+\varepsilon'\in]a,a+\varepsilon[$.  We have $0<\varepsilon'<\varepsilon$. 
For every integer $k$, we have 
$$
kN_aa'=kN_aa+kN_a\varepsilon' \enspace .
$$
As $\varepsilon'\leq \frac{1}{2N_a}$, there exists an integer $k$ such that $kN_a\varepsilon'\leq 1< (k+1)N_a\varepsilon'$. We have then, 
$$ \tau \leq 1-\frac{1}{m}\leq 1-2N_a\varepsilon'< (k-1)N_a\varepsilon'< kN_a\varepsilon'\leq 1.$$
This leads to, adding $(k-1)N_aa$, 
$$
(k-1)N_aa +\tau <
(k-1)N_aa'
<(k-1)N_aa+1.
$$
Since $N_aa\in \mathbb{N}$, this means that $(k-1)N_a a'\in \operatorname{int}(W_\tau)$. If we assume that $a'\in E_\tau$, we have $N_{a'}a'\in\mathbb{N}$ and  then by construction 
$$
\left\{0, a',\dots, N_{a'}a'\right\}\cap W_\tau=\left\{0,N_{a'}a'\right\}
$$
It follows then that
$$
a'\mathbb{N}\cap W_\tau= (N_{a'}a')\mathbb{N},
$$
which in particular implies $a'\mathbb{N}\cap int(W_\tau)=\emptyset$.
This contradicts the existence of $p$.

 We have proved that 
$$
]\inf I_a,a+\varepsilon[\cap E_\tau=\{a\},
$$
which ends the proof of Lemma~\ref{lem:discrete}
\end{proof}

\subsection{\texorpdfstring{Upper bound on $\lambda^{\scaling}$}{Upper bound}}\label{sec:upper_bound}

We next show that $\lambda^\infty$ is an upper bound of $\lambda^\kappa$,
for each $\kappa>0$.  This will turn out to be a relatively simple consequence
of the heuristic argument,
involving the earliest possible times for division,
which led to the formula
of $\lambda^\infty$. 
The proof relies the generational reformulation of the division equation.
Let $a>0,\tau \in]0,1[,\scaling>0$ be fixed. We take the eigenvector as an initial data.

$$
\begin{cases}
\partial_tn_i+\partial_x n_{i} +\scaling\psi(t)\bbbone_{[a,\infty[}(x)n_i(t,x)=0,\\[0.3cm]
n_{i+1}(t,0)=\scaling\psi(t)\int_a^\infty n_i(t,x)dx,\\[0.3cm]
n_0(x)=N^{\scaling}(0,x).
\end{cases}.
$$

We note by $I_{\max}(t)$ the maximal integer $i$ (generation) such that $n_i(t,\cdot)\not\equiv 0$. At time $t=0$, we have $I_{\max}(0)=0$. Since cells need to have an age bigger than $a$ to change generation, we have necessarily $I_{\max}(t)\leq 1$ for $t<a$ and more generally, 
$$
I_{\max}(t)\leq n,\qquad \text{if } t<na.
$$
$$I_{\max}(t)\leq N_a \qquad \text{if } t<(N_a)a.$$
We introduce the notation $p_a=\lceil N_a a\rceil-1$. The integers $N_a,p_a$ are then the minimal integers such that 
$$
p_a+\tau \leq N_aa\leq p_a+1.
$$
We use now the properties of the equation: since no change of generation may occur between times $p_a+\tau$ and $p_a+1$,we have
$$I_{\max}(t)\leq N_a \qquad \text{if } p_a+\tau \leq t<p_a+1.$$
Now, from the definition of $N^{\scaling}$ and $\lambda:=\lambda^{\scaling}$, we have 
$$
\sum_{i=0}^\infty 2^in_i(t,x)=e^{\lambda t}N^{\scaling}(t,x).
$$
In particular, for $t<p_a+1$, we have 
$$
\sum_{i=0}^{I_{\max}(t)} 2^in_i(t,x)=\sum_0^{N_a}2^i n_i(t,x)=e^{\lambda t}N^{\scaling}(t,x).
$$
We integrate with respect to $x$ and obtain 
\begin{equation}\label{eq:upper_1}
e^{\lambda t}\displaystyle\int_0^\infty N^{\scaling}(t,x)dx =\sum_0^{N_a}2^i\displaystyle\int_0^\infty n_i(t,x)dx \leq 2^{N_a}\sum_0^{N_a}\displaystyle\int_0^\infty n_i(t,x)dx.
\end{equation}
But we also have from the properties of the generational formulation and the definition of $I_{\max}$, 
 \begin{equation}\label{eq:upper_2}
 \sum_0^{N_a}\displaystyle\int_0^\infty n_i(t,x)dx=\sum_0^{\infty}\displaystyle\int_0^\infty n_i(t,x)dx=\displaystyle\int_0^\infty n_0(0,x)dx=\displaystyle\int_0^\infty N^{\scaling}(0,x)dx.
 \end{equation}
 Combining (\ref{eq:upper_1}-\ref{eq:upper_2}), we get 
 $$
 e^{\lambda t}\int_0^{\scaling}N^{\scaling}(t,x)dx \leq 2^{N_a}\displaystyle\int_0^\infty N^{\scaling}(0,x)dx, \quad t<p_a+1.
 $$
 As $\scaling$ is fixed, we can use the continuity of $\displaystyle\int_0^\infty N^{\scaling}(t,x)dx$. We have therefore
 $$
\exp\big(\lambda^{\scaling} (p_a+1)\big)\displaystyle\int_0^\infty N^{\scaling}(p_a+1,x)dx\leq 2^{N_a}\displaystyle\int_0^\infty N^{\scaling}(0,x)dx.
 $$
 As $N^{\scaling}(p_a+1,x)=N^{\scaling}(0,x)$, we have
 $$
\exp\big(\lambda^{\scaling} (p_a+1)\big)\leq 2^{N_a}.
 $$
 And therefore, as expected 
 $$
 \lambda^{\scaling}(a)\leq \frac{N_a}{p_a+1}\log 2=\lambda^\infty(a).
 $$\qed

\subsection{\texorpdfstring{Bounding $\lambda$ from below}{Bounding lambda from below}}

\subsubsection*{Step 1: Deriving inequality~\eqref{eq:devil_bounds}} (with $\varepsilon$ instead of $r$.)
We derive the bound through the following technical lemma, in which the sequence
$\theta_1,\ldots,\theta_k$ represents possible successive division times of
an individual of age $1-\tau$ at time $0$. 

\begin{lemma} \label{lem:theta_epsilon}
Suppose there exist $\theta_1,\dots\theta_{N_a}$ and $\varepsilon\geq 0$, satisfying the following properties:
\begin{equation}\label{eq:theta_epsilon}
\bepa
\theta_1\geq \max(a-1+\tau,0),\\
\forall i\leq N_a-1,\quad \theta_{i+1}-(\theta_i+\varepsilon)\geq a,\\
\forall i\in\{1,\dots, N_a\}\qquad \theta_i+\varepsilon<\lfloor \theta_i\rfloor+\tau,
\eepa
\end{equation}
then the following inequality holds:
\beq\label{eq:taux}
e^{\lambda^\scaling(a)(p_a+1)}\geq (2-e^{-\scaling \varepsilon})^{N_a}.
\eeq
\end{lemma}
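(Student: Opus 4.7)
The plan is to exhibit a specific initial datum for the PDE~\eqref{eq:Malthus_d=0} whose total mass grows by a factor of at least $(2-e^{-\scaling\varepsilon})^{N_a}$ over the interval $[0,p_a+1]$, and then to convert this one--period mass estimate into the claimed lower bound on $e^{\lambda^\scaling(p_a+1)}$ via a spectral argument.

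I would take the singular initial datum $n^0=\delta_{1-\tau}$, a unit Dirac at age $1-\tau$ at time $0$. The hypothesized points $\theta_1,\ldots,\theta_{N_a}$ are interpreted as the successive division windows for the lineage of this individual: the bound $\theta_1\geq\max(a-1+\tau,0)$ ensures that the cell has reached the majority age $a$ by $\theta_1$; the bound $\theta_i+\varepsilon<\lfloor\theta_i\rfloor+\tau$ places each window $[\theta_i,\theta_i+\varepsilon]$ entirely inside a favorable interval where $\psi\equiv 1$, so that $K\equiv\scaling$ on the age--eligible cells there; and $\theta_{i+1}-(\theta_i+\varepsilon)\geq a$ guarantees that any newborn cell from window $i$ has had enough time to reach age $a$ before window $i+1$ opens.

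The core calculation is the following mass--doubling per window. If at time $\theta_{k+1}$ the accounted total mass $M$ is entirely supported on ages $\geq a$, then on $[\theta_{k+1},\theta_{k+1}+\varepsilon]$ the ``elder'' mass decays as $Me^{-\scaling s}$ while newborns are injected through the boundary at rate $2\scaling Me^{-\scaling s}$, summing to
\[
M\bigl(e^{-\scaling\varepsilon}+2(1-e^{-\scaling\varepsilon})\bigr)=M(2-e^{-\scaling\varepsilon})
\]
at $\theta_{k+1}+\varepsilon$. Between windows, $P(t):=\int n(t,x)\,dx$ is nondecreasing, since $P'(t)=\int K(t,x)n(t,x)\,dx\geq 0$, and the second condition on the $\theta_i$'s ensures that at $\theta_{k+1}$ the accounted mass is again supported on ages $\geq a$. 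An induction on $k\in\{1,\ldots,N_a\}$, based on the observation that no division is possible before $\theta_1$ (so $P(\theta_1)=1$), then yields $P(\theta_{N_a}+\varepsilon)\geq (2-e^{-\scaling\varepsilon})^{N_a}$. Combining the third condition with $p_a+1=\lceil N_aa\rceil$ forces $\lfloor\theta_{N_a}\rfloor\leq p_a$, hence $\theta_{N_a}+\varepsilon<p_a+\tau\leq p_a+1$, so by monotonicity $P(p_a+1)\geq (2-e^{-\scaling\varepsilon})^{N_a}$.

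To deduce the claimed bound on $e^{\lambda^\scaling(p_a+1)}$, I would use that, by Krein--Rutman theory (see the appendix) and the $1$--periodicity of the coefficients, the operator $S_{p_a+1}$ that advances the density by $p_a+1$ units of time has spectral radius $e^{\lambda^\scaling(p_a+1)}$, with $\lim_{m\to\infty}\|S_{p_a+1}^m u\|^{1/m}=e^{\lambda^\scaling(p_a+1)}$ for every nontrivial $u\geq 0$. Iterating the per--period mass estimate $m$ times yields $\|S_{p_a+1}^m\delta_{1-\tau}\|\geq (2-e^{-\scaling\varepsilon})^{mN_a}$, from which the stated inequality follows upon extraction of $m$--th roots. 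The main obstacle I expect is this iteration step: after one period the state is no longer a single Dirac, so one must argue that the mass--doubling induction can be replayed --- for instance by identifying the ``surviving ancestor'' component of residual mass $e^{-N_a\scaling\varepsilon}$, which by $1$--periodicity of $K$ provides a fresh seed for another full cycle of $N_a$ divisions in the next period.
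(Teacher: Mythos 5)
Your core mass-doubling calculation per window is exactly the mechanism in the paper's proof (each window turns a mass $M$ supported on $[a,\infty)$ into $M(2-e^{-\scaling\varepsilon})$, and the spacing condition $\theta_{i+1}-(\theta_i+\varepsilon)\geq a$ lets you repeat this $N_a$ times), and the paper also reduces $K$ to the smaller $K'$ that is zero outside the windows $[\theta_i,\theta_i+\varepsilon]$ in order to make the support claim literally true, via the comparison principle of Theorem~\ref{th-comparison}. You gesture at this with the word ``accounted,'' but you should make it explicit: with the actual $K$ there can be births strictly between windows, after which the mass is no longer supported on $[a,\infty)$ at the next $\theta_{k+1}$.

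The genuine gap is in the final conversion to the eigenvalue bound. You start from a Dirac at age $1-\tau$, obtain a one-period estimate $\|S_{p_a+1}\delta_{1-\tau}\|\geq(2-e^{-\scaling\varepsilon})^{N_a}$, and then need to iterate $S_{p_a+1}$ to recover $e^{\lambda^\scaling(p_a+1)}$ as an asymptotic growth rate, which you correctly identify as the obstacle. Your proposed repair (the ``surviving ancestor'' as a fresh seed for the next cycle) does not work as stated: that ancestor is at age $1-\tau+p_a+1$ at time $p_a+1$, not at age $1-\tau$, so it is not in the same position as the original seed; more generally, at time $p_a+1$ the state is spread over many ages and one would need to check that the per-period estimate still applies. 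The paper sidesteps the entire iteration by taking the Floquet eigenvector $N^\scaling(0,\cdot)$ as the initial datum in the generational system. Because $\psi$ vanishes on $[\tau,1)$, the boundary condition forces $N^\scaling(0,x)=0$ for $x\in[0,1-\tau)$, so this datum is at least as favorably aged as your Dirac, and the generational identity $\sum_i 2^i n_i(p_a+1,x)=e^{\lambda^\scaling(p_a+1)}N^\scaling(0,x)$ turns the one-period mass estimate directly into the eigenvalue inequality, with no asymptotic spectral-radius argument needed. If you want to salvage your route, the right observation is that under $K'$ the entire mass at time $p_a+1$ lies at ages $\geq 1-\tau$ (the last births under $K'$ occur at $\theta_{N_a}+\varepsilon<p_a+\tau$), so the per-period estimate does iterate; but the eigenvector trick is cleaner and is what the paper does.
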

Intuitively, this lemma
shows that if there is a ``tube'' of positive width $\varepsilon$, limited
from below by
such a sequence $\theta_1,\dots,\theta_{N_a}$
and included in the set of times at
which division is permitted, then, ``enough'' individuals
can follow this tube, so the Floquet eigenvalue $\lambda^\kappa(a)$
can be bounded below by $(N_a\log 2)/(p_a+1)$, up to a correction
of order $O(\exp(-\kappa \epsilon))$ as $\kappa\to\infty$. 

The existence of the sequence $\theta_1,\dots,\theta_{N_a}$ will be established in Step 2 below when $\varepsilon=0$. We shall see in Step~3 that such a sequence does exist, 
for some $\varepsilon>0$, provided that $a$ does not coincide
with the right end $a_r$ of a step of the staircase function $\lambda^\kappa$.
This is illustrated in Figure~\ref{fig-theta}.
\begin{figure}
\begin{center}
\begin{picture}(0,0)%
\includegraphics{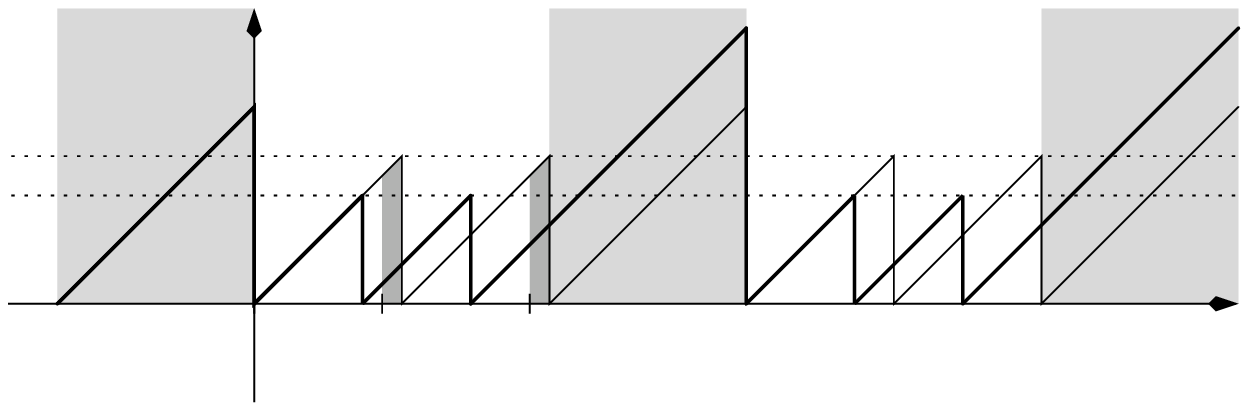}%
\end{picture}%
\setlength{\unitlength}{4144sp}%
\begingroup\makeatletter\ifx\SetFigFont\undefined%
\gdef\SetFigFont#1#2#3#4#5{%
  \reset@font\fontsize{#1}{#2pt}%
  \fontfamily{#3}\fontseries{#4}\fontshape{#5}%
  \selectfont}%
\fi\endgroup%
\begin{picture}(5707,1824)(1741,-5023)
\put(2971,-3436){\makebox(0,0)[lb]{\smash{{\SetFigFont{10}{12.0}{\rmdefault}{\mddefault}{\updefault}{\color[rgb]{0,0,0}age $\alpha(t)$}%
}}}}
\put(7201,-4786){\makebox(0,0)[lb]{\smash{{\SetFigFont{10}{12.0}{\rmdefault}{\mddefault}{\updefault}{\color[rgb]{0,0,0}$t$}%
}}}}
\put(1801,-4786){\makebox(0,0)[lb]{\smash{{\SetFigFont{10}{12.0}{\rmdefault}{\mddefault}{\updefault}{\color[rgb]{0,0,0}$-1+\tau$}%
}}}}
\put(1756,-4201){\makebox(0,0)[lb]{\smash{{\SetFigFont{10}{12.0}{\rmdefault}{\mddefault}{\updefault}{\color[rgb]{0,0,0}$a=0.22$}%
}}}}
\put(1756,-3841){\makebox(0,0)[lb]{\smash{{\SetFigFont{10}{12.0}{\rmdefault}{\mddefault}{\updefault}{\color[rgb]{0,0,0}$a_r=0.3$}%
}}}}
\put(3466,-4786){\makebox(0,0)[lb]{\smash{{\SetFigFont{10}{12.0}{\rmdefault}{\mddefault}{\updefault}{\color[rgb]{0,0,0}$\theta_2$}%
}}}}
\put(4141,-4786){\makebox(0,0)[lb]{\smash{{\SetFigFont{10}{12.0}{\rmdefault}{\mddefault}{\updefault}{\color[rgb]{0,0,0}$\theta_3$}%
}}}}
\put(2971,-4786){\makebox(0,0)[lb]{\smash{{\SetFigFont{10}{12.0}{\rmdefault}{\mddefault}{\updefault}{\color[rgb]{0,0,0}$\theta_1$}%
}}}}
\end{picture}%

\end{center}
\caption{Illustration of Lemma~\ref{lem:theta_epsilon} and of Lemma~\ref{lem:theta_thick}: the trajectory of the jump process $\alpha(t)$ starting from $\alpha(-1+\tau)=0$ is shown in bold for $a=0.22$ and $\tau=0.6$
(division is not permitted in the light ray regions).
 Here, $N_a=3$ and $p_a=0$, so
$\lambda^\infty(a)=3\log 2$. The minimal division age $a$ can be increased
up to $a_r=0.3$, leaving $\lambda^\infty(a)$ unchanged.
The conditions of Lemma~\ref{lem:theta_epsilon}
can be satisfied with a positive $\varepsilon$, because,
for all $a'\in [a,a_r[$, the combinatorial type of the trajectory of the jump process $\alpha(t)$ is unchanged
(trajectory shown by a thin line,  for $a'$ close 
to $a_r$); choosing $\theta_1:=0$, the times $\theta_2$ and $\theta_3$ can be taken to be the absciss\ae\ of the left of the dark grey regions of width $\varepsilon<(a_r-a)/2=0.04$.}
\label{fig-theta}
\end{figure}

\proof

 As for the upper bound, we consider the generational reformulation~\ref{eq:division_generation}, in which $K(t,x)=\scaling\psi(t)\bbbone_{[a,\infty[}(x)$ and the eigenvector $N^{\scaling}(t=0,x)$ is the initial data. 
{From} the definition, we have 
$$
\sum_i 2^in_i(p_a+1,x)=N^{\scaling}(0,x)\exp\big(\lambda^{\scaling}(a)(p_a+1)\big) \enspace .
$$ 
Especially, 

$$
\sum_i 2^i\displaystyle\int_0^\infty n_i(p_a+1,x)dx=\exp\big(\lambda^{\scaling}(a)(p_a+1)\big)\displaystyle\int_0^\infty N^{\scaling}(0,x)dx.
$$ 
We recall that $N^{\scaling}(0,x)=0$ for $x\in [0,1-\tau[$.
As we look for a bound from below, 
thanks to the comparison principle,
it is sufficient to consider the individuals that are the youngest at the beginning, that is the individuals starting at age $(1-\tau)$.
As $K(t,x)$ is nondecreasing with respect to $x$, we know that if we choose $K'(t,x)\leq K(t,x)$ , and $n_i'$ with obvious notations, then we have thanks to the comparison principle
$$
\sum_i 2^i\displaystyle\int_0^\infty n_i'(p_a+1,x)dx\leq \exp\big(\lambda^{\scaling}(a)(p_a+1)\big)\displaystyle\int_0^\infty N^{\scaling}(0,x)dx.
$$ 
We choose 
$$
K'(t,x)=\left\lbrace\begin{array}{l}
\scaling \quad \text{ if } t\in]\theta_1,\theta_1+\varepsilon],\quad x\geq a\\
\scaling \quad \text{ if } t\in]\theta_i+(i-1)\varepsilon,\theta_i+i\varepsilon],\quad x\geq a ,\\
0\quad \text{otherwise}.
\end{array}\right.
$$
{From} the definition of $(\theta_1,\dots \theta_{N_a})$ and $\varepsilon$, we have $K'(t,x)\leq K(t,x)$.
The most important property of the transition rate $K'$ is the following.
\begin{lemma}
Suppose that $n'_i$ are defined by the generational dynamics with transition rate $K'$, then for any $1\leq i\leq N_a$, 
$$
\supp n'_i(\theta_i,\cdot)\subset [a,+\infty[. 
$$
\end{lemma}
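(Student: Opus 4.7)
The plan is to induct on $i$, exploiting the transport structure of the generational equations and the carefully tuned support of $K'$. Between the time windows where $K'$ is nonzero, the dynamics reduce to pure translation along characteristics, so the $x$-support of each $n_i'$ simply shifts to the right at unit speed. The three inequalities in \eqref{eq:theta_epsilon} are chosen so that (i) at the time the $i$-th division window opens, the cells of the previous generation have aged to at least $a$ and therefore can actually divide; (ii) this window lies within a permitted interval of $\psi$, i.e., $K'\leq K$ holds on the window; and (iii) this cascade continues through all $N_a$ divisions.

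For the base case, I would invoke the comparison principle (Theorem~\ref{th:monotonicity}) to reduce to the extremal initial profile concentrated at age $1-\tau$, the youngest admissible age given that $N^{\scaling}(0,\cdot)$ vanishes on $[0,1-\tau)$. Since $K'\equiv 0$ on $[0,\theta_1]$, the method of characteristics yields that $n_0'(\theta_1,\cdot)$ is the pure translate of $n_0'(0,\cdot)$, whose support is shifted to age $1-\tau+\theta_1$; the first condition of \eqref{eq:theta_epsilon}, namely $\theta_1\geq\max(a-1+\tau,0)$, then places this support in $[a,+\infty)$. The inductive step assumes the support property for all generations $j<i$: during the $(i-1)$-st window, those cells (with age already $\geq a$) undergo divisions, producing generation-$i$ cells at age $0$; subsequent pure translation until time $\theta_i$ shifts their ages by at least $\theta_i-\theta_{i-1}-\varepsilon\geq a$ (the second condition of \eqref{eq:theta_epsilon} applied with index $i-1$), so that the entire support of $n_i'(\theta_i,\cdot)$ lies in $[a,+\infty)$.

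The main technical hurdle is the careful bookkeeping around the $\varepsilon$-wide windows: because divisions can be triggered at any instant within such a window, cells of a given generation are born spread over a range of ages of width up to $\varepsilon$, and it is the minimum of this spread that must clear the age threshold $a$ at the next window. The inequalities \eqref{eq:theta_epsilon} are tight precisely for this reason, and the third condition $\theta_i+\varepsilon<\lfloor\theta_i\rfloor+\tau$ is needed separately to ensure that the entire $i$-th division window remains within a permitted interval of $\psi$, so that the inequality $K'\leq K$ holds there. Once these three conditions are deployed at the correct step in the induction, the characteristic computation is routine and the support claim follows for every $1\leq i\leq N_a$.
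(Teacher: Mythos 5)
Your core combinatorial observation is correct and is indeed the mechanism in the paper: the spacing conditions in \eqref{eq:theta_epsilon} force the birth time $\theta_i-x$ of any generation-$i$ cell of age $x<a$ at time $\theta_i$ to lie in $]\theta_i-a,\theta_i]$, an interval disjoint from the support of $K'$. But the paper's argument is direct rather than inductive: for every $j\geq 1$ and $x<a$, one writes $n'_j(\theta_i,x)=n'_j(\theta_i-x,0)\,\myexp{-\int_0^x K'(\theta_i-x+s,s)\,ds}$ and notes that $n'_j(\theta_i-x,0)=\int_0^\infty K'(\theta_i-x,y)\,n'_{j-1}(\theta_i-x,y)\,dy=0$ since $K'(\theta_i-x,\cdot)\equiv 0$; generation $0$ is handled separately via $\supp n'_0(\theta_i,\cdot)=\supp n'_0(0,\cdot)+\theta_i$ together with $\theta_i\geq a-1+\tau$. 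This yields the stronger statement $\supp n'_j(\theta_i,\cdot)\subset[a,+\infty[$ for \emph{every} $j\geq 0$, which is in fact what the subsequent Lemma~\ref{lem-induction} requires. Your inductive hypothesis on generation $i-1$ at time $\theta_{i-1}$ is never actually used: the constraint that only cells with $x\geq a$ can divide is already built into $K'$, so no information about the age profile of the parent generation is needed, and your ``base case'' is about generation $0$ rather than generation $1$ (which is vacuous at time $\theta_1$ because $K'\equiv 0$ on $[0,\theta_1]$). Finally, the appeal to the comparison principle and the reduction to initial mass at age $1-\tau$ belong to the proof of the enclosing Lemma~\ref{lem:theta_epsilon}, not to this sub-lemma; here one simply uses the known fact $\supp N^{\scaling}(0,\cdot)\subset[1-\tau,+\infty[$ directly.
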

\proof We have for $x<a$, $\theta_i-x\not \in \bigcup_i ]\theta_i+(i-1)\varepsilon,\theta_i+i\varepsilon]$  and thereby $K'(\theta_i-x,\cdot)=0$. Therefore for any $j\geq 0$
$$
n'_{j+1}(\theta_i-x,0)=\displaystyle\int_0^\infty K'(\theta_i-x,y)n'_j(\theta_i-x,y)dy=0,
$$ 
and using the characteristics 
$$
n_{j+1}(\theta_i,x)=n_{j+1}(\theta_i-x,0)e^{-\int_0^x K'(\theta_i-x+s,s)ds}=0.
$$
Finally, as $\supp n_0^0\subset [1-\tau,+\infty[$ and as $\supp n_0(t,\cdot)=\supp n_0^0 + t$, we have since $\theta_i\geq a-1+\tau$ for any $i$
$$
\supp n_0(\theta_i,\cdot)\subset [a,+\infty[.
$$
\qed

The latter inclusion means that for any $j>0$, at time $\theta_j$, every individual is mature enough to divide. 
\begin{lemma}\label{lem-induction}
The population $n_i$ satisfies the following equality : defining $I_i(t)=\displaystyle\int_0^\infty n_i(t,x)dx,$ we have 
$$
\forall j>0,\forall i, \quad \left\lbrace\begin{array}{l} I_0(\theta_j+\varepsilon)=e^{-\scaling \varepsilon}I_0(\theta_j),\\
I_{i+1}(\theta_j+\varepsilon)=e^{-\scaling\varepsilon}I_{i+1}(\theta_j)+(1-e^{-\scaling\varepsilon})I_i(\theta_j).
\end{array}\right.
$$
\end{lemma}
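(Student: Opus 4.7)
The idea is to integrate the generational PDE in $x$ and to exploit the support information from the previous lemma, namely that at every time $\theta_j$ each density $n_i(\theta_j,\cdot)$ is supported in $[a,+\infty[$. Integrating $\partial_t n_i + \partial_x n_i + K'(t,x) n_i = 0$ in $x$ and using the decay at infinity yields the mass balance
\begin{equation*}
\frac{d}{dt} I_i(t) = n_i(t,0) - \int_0^\infty K'(t,x) n_i(t,x)\,dx.
\end{equation*}
On the interval $t\in (\theta_j,\theta_j+\varepsilon]$ the rate $K'$ equals $\scaling$ on $[a,+\infty[$ and vanishes elsewhere.

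On this interval I would split $n_i$ into an ``old'' part $n_i^{\mathrm{old}}$, namely the mass inherited from $n_i(\theta_j,\cdot)$ transported rightward by the free PDE with zero boundary source, and a ``new'' part $n_i^{\mathrm{new}}$ generated by the boundary condition during $(\theta_j,t]$; write $M_i^{\mathrm{old}}(t)$ and $M_i^{\mathrm{new}}(t)$ for the corresponding masses. The previous lemma combined with rightward transport along characteristics forces $n_i^{\mathrm{old}}(t,\cdot)$ to be supported in $[a+(t-\theta_j),\infty)\subset[a,\infty)$, whereas $n_i^{\mathrm{new}}(t,\cdot)$ is supported in $[0,t-\theta_j]\subset[0,\varepsilon)$. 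Assuming $\varepsilon<a$, which may be enforced by slightly shrinking $\varepsilon$ without affecting the final bound, these supports are disjoint, so $K'$ acts only on the old part. The old mass therefore satisfies $\frac{d}{dt}M_i^{\mathrm{old}}=-\scaling M_i^{\mathrm{old}}$, giving $M_i^{\mathrm{old}}(\theta_j+\varepsilon)=e^{-\scaling\varepsilon}I_i(\theta_j)$.

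For generation $i+1$, the boundary source on $(\theta_j,\theta_j+\varepsilon]$ simplifies, thanks to the support separation, to
\begin{equation*}
n_{i+1}^{\mathrm{new}}(t,0)=\scaling\int_a^\infty n_i(t,x)\,dx=\scaling M_i^{\mathrm{old}}(t)=\scaling e^{-\scaling(t-\theta_j)}I_i(\theta_j).
\end{equation*}
Since $K'$ vanishes on the support of $n_{i+1}^{\mathrm{new}}$, its mass obeys $\frac{d}{dt}M_{i+1}^{\mathrm{new}}(t)=n_{i+1}^{\mathrm{new}}(t,0)$, and integrating from $\theta_j$ to $\theta_j+\varepsilon$ yields $M_{i+1}^{\mathrm{new}}(\theta_j+\varepsilon)=(1-e^{-\scaling\varepsilon})I_i(\theta_j)$. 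Summing the two contributions gives the second identity of the lemma; the first ($i=0$) follows immediately from $n_0(t,0)=0$, which forces $M_0^{\mathrm{new}}\equiv 0$. The main subtle point is securing the clean decomposition into disjoint supports on $(\theta_j,\theta_j+\varepsilon]$; once this is established the remainder is a direct mass-balance calculation.
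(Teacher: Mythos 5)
Your proposal is correct and follows essentially the same route as the paper: both proofs rest on the support separation of $n_i(\theta_j+\varepsilon,\cdot)$ into a piece in $[a+\varepsilon,\infty)$ coming from transport of $n_i(\theta_j,\cdot)$ (your $M_i^{\mathrm{old}}$) and a piece in $[0,\varepsilon]$ created at the boundary (your $M_{i+1}^{\mathrm{new}}$), with the exponential factors computed along characteristics. The paper integrates the characteristic formulas directly rather than phrasing them as mass-balance ODEs, and asserts $\varepsilon<a$ outright from the construction rather than shrinking $\varepsilon$, but these are only presentational differences.
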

\proof It is essentially based on the previous lemma. By construction, we have $\varepsilon <a$. Since we already proved that $\supp \; n_i(\theta_j,\cdot)\subset [a,+\infty[$, we can claim   
$$
\supp \; n_i(\theta_j+\varepsilon,\cdot)\subset [0,\varepsilon]\cup [a+\varepsilon,+\infty[ \;\text{for } i\geq 1 , \;\supp n_0(\theta_j+\varepsilon)\subset [a+\varepsilon,+\infty[
$$  
From the definition of the $n_j$, we have,  
$$
\forall i, \forall x\geq a+\varepsilon,\quad n_i(\theta_j+\varepsilon,x)=n_i(\theta_j,x-\varepsilon)e^{-\int_0^{\varepsilon}K'(\theta_j+s,x-\varepsilon+s)ds}=n_i(\theta_j,x-\varepsilon)e^{-\scaling \varepsilon}.
$$ 
Therefore, 
$$
\int_{x\geq a+\varepsilon} n_i(\theta_j+\varepsilon,x)dx=e^{-\scaling\varepsilon}\int_{x\geq a} n_i(\theta_j,x)dx=e^{-\scaling\varepsilon}I_i(\theta_j).
$$
We have also, for all $i\geq 0$, 
\begin{align*}
\int_{x\leq \varepsilon}n_{i+1}(\theta_j+\varepsilon,x)dx
&=\int_0^\varepsilon n_{i+1}(\theta_j+\varepsilon-x,0)e^{-\int_0^x K'(\theta_j+\varepsilon-x+s,s)ds}dx\\
&=\int_0^\varepsilon n_{i+1}(\theta_j+\varepsilon-x,0)dx\\
&=\int_0^\varepsilon n_{i+1}(\theta_j+s,0)dx.
\end{align*}
Since $\supp n_i(\theta_j,\cdot)\subset [a,\infty[$ and $\varepsilon<a$, we also have 
$$
n_{i+1}(\theta_j+s,0)=\displaystyle\int_0^\infty K'(\theta_j+s,x)n_i(\theta_j+s,x)dx=\int_a^\infty \scaling n_i(\theta_j,x)e^{-\scaling s}=I_i(\theta_j)\scaling e^{\scaling s}.
$$
We integrate and obtain finally
$$
\int_{x\leq \varepsilon}n_{i+1}(\theta_j+\varepsilon,x)dx=(1-e^{-\scaling \varepsilon})I_i(\theta_j).
$$
This completes the proof of the lemma.\qed 

To conclude the proof of Lemma~\ref{lem:theta_epsilon}, we
shall need the following binomial type representation.
\begin{lemma}
Denote by $n\choose p$
the binomial coefficients (with  value $0$ if $p>n$), then we have 
$$
\forall i, I_i(\theta_j+\varepsilon)=I_0(0)
{j\choose i}
(1-e^{\scaling \varepsilon})^i (e^{-\scaling\varepsilon})^{j-i}.
$$
\end{lemma}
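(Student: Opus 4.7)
I would prove this by induction on $j$, using Lemma~\ref{lem-induction} together with a mass-conservation observation that links consecutive division windows. Set $q := e^{-\kappa\varepsilon}$ and $p := 1-e^{-\kappa\varepsilon}$, so that Lemma~\ref{lem-induction} reads $I_0(\theta_j+\varepsilon)=q\,I_0(\theta_j)$ and $I_{i+1}(\theta_j+\varepsilon)=q\,I_{i+1}(\theta_j)+p\,I_i(\theta_j)$. The binomial shape of the claimed formula is precisely what such a two-state stochastic recurrence produces.

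The key auxiliary fact I would establish first is that, for every $j\geq 1$ and every $i$,
\[
I_i(\theta_{j+1})=I_i(\theta_j+\varepsilon) \enspace.
\]
This holds because the rate $K'$ vanishes identically on $[\theta_j+\varepsilon,\theta_{j+1}]$: by the second line of~\eqref{eq:theta_epsilon}, the windows on which $K'$ is nonzero are separated by gaps of length at least $a>\varepsilon$, hence are pairwise disjoint and do not intersect $[\theta_j+\varepsilon,\theta_{j+1}]$. On this interval the evolution reduces to $\partial_t n_i+\partial_x n_i=0$, which is pure right-transport, so each total mass $I_i$ is preserved. The same argument, applied on $[0,\theta_1]$, yields $I_0(\theta_1)=I_0(0)$ and $I_i(\theta_1)=0$ for $i\geq 1$ (here one also uses that $\operatorname{supp} n_0(0,\cdot)\subset[1-\tau,+\infty[$ together with $\theta_1\geq a-1+\tau$, which is exactly what the preceding lemma was set up to exploit).

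For the base case $j=1$, Lemma~\ref{lem-induction} immediately yields $I_0(\theta_1+\varepsilon)=q\,I_0(0)$ and $I_1(\theta_1+\varepsilon)=p\,I_0(0)$, matching the formula with $\binom{1}{0}$ and $\binom{1}{1}$. For the inductive step, assume the formula at $\theta_j+\varepsilon$; by the conservation property it also holds at $\theta_{j+1}$. Lemma~\ref{lem-induction} then gives
\[
I_i(\theta_{j+1}+\varepsilon)=q\,I_i(\theta_{j+1})+p\,I_{i-1}(\theta_{j+1}),
\]
into which I substitute the inductive hypothesis. Pascal's identity $\binom{j}{i}+\binom{j}{i-1}=\binom{j+1}{i}$ then closes the induction, and the case $i=0$ is handled by the first relation of Lemma~\ref{lem-induction}.

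The main (and rather mild) obstacle is the conservation statement $I_i(\theta_{j+1})=I_i(\theta_j+\varepsilon)$; once this is established, the proof is a routine binomial induction, reflecting the probabilistic picture that during each window $[\theta_j,\theta_j+\varepsilon]$ each individual independently either "divides" (with weight $p$) or "does not" (with weight $q$). I would also note in passing the mild typographical correction that the statement should read $(1-e^{-\kappa\varepsilon})^i(e^{-\kappa\varepsilon})^{j-i}$, which is what the recurrence of Lemma~\ref{lem-induction} produces.
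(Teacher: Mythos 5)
Your proof is correct and follows exactly the route the paper sketches: induction from Lemma~\ref{lem-induction} together with the conservation identity $I_i(\theta_{j+1})=I_i(\theta_j+\varepsilon)$ (which you justify by noting that $K'$ vanishes between the division windows, so each $I_i$ is transported without loss). You are also right that the displayed formula contains a sign typo and should read $(1-e^{-\scaling\varepsilon})^i(e^{-\scaling\varepsilon})^{j-i}$, as the recurrence and the subsequent computation $\sum_i 2^i\binom{N_a}{i}(1-e^{-\scaling\varepsilon})^i(e^{-\scaling\varepsilon})^{N_a-i}=(2-e^{-\scaling\varepsilon})^{N_a}$ both confirm.
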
 
\proof This is readily obtained by induction from Lemma~\ref{lem-induction},
using the fact
that $I_{i}(\theta_j+\epsilon)=I_i(\theta_{j+1})$, for all $j$.
\qed


We finally estimate 
$$
\sum_i 2^i\displaystyle\int_0^\infty n_i'(p_a+1,x)dx =\sum_i 2^i I_i(p_a+1).
$$ 
It follows from the construction of $K'$ that $I_i (p_a+1)=I_i(\theta_{N_a}+\varepsilon)$. Therefore,
\begin{align*}
\sum_i 2^i\displaystyle\int_0^\infty n_i'(p_a+1,x)dx 
&=I_0(0)\sum_i 2^i  {N_a \choose i}
(1-e^{\scaling \varepsilon})^i (e^{-\scaling\varepsilon})^{N_a-i}\\
&= (2(1-e^{-\scaling\varepsilon})+e^{-\scaling\varepsilon})^{N_a}=(2-e^{-\scaling\varepsilon})^{N_a} \enspace .
\end{align*}
This ends the proof of Lemma~\ref{lem:theta_epsilon}.\qed

\subsubsection*{Step 2: Construction of the sequence $\theta_i$, with $\varepsilon=0$}

We need now to prove the existence of the sequence $\theta_i$ used in Lemma~\ref{lem:theta_epsilon}. First we prove the following 
\begin{lemma}\label{lem:theta_0}
There exists a sequence $\theta_i$ as in Lemma~\ref{lem:theta_epsilon} with $\varepsilon=0$.
\end{lemma}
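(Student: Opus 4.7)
The plan is to construct $\theta_1,\dots,\theta_{N_a}$ greedily, scheduling each division at the earliest admissible instant. Let $D_\tau:=[0,\tau)+\mathbb{N}$ denote the set of ``daytime'' instants, i.e.\ those $t\geq 0$ with $t-\floor{t}<\tau$ (equivalently, $t<\floor{t}+\tau$), and set $t_0:=\max(a-1+\tau,0)$. Define $\theta_1$ to be the smallest element of $D_\tau$ satisfying $\theta_1\geq t_0$: concretely, $\theta_1=t_0$ if $t_0-\floor{t_0}<\tau$, and $\theta_1=\floor{t_0}+1$ otherwise. Inductively, for $i\geq 2$, let $\theta_i$ be the smallest element of $D_\tau$ with $\theta_i\geq\theta_{i-1}+a$. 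Because $D_\tau$ intersects every unit interval, each $\theta_i$ is well-defined.

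With this greedy construction, the three conditions of Lemma~\ref{lem:theta_epsilon} (specialized to $\varepsilon=0$) are automatic: $\theta_1\geq t_0=\max(a-1+\tau,0)$ by the initialization, $\theta_{i+1}-\theta_i\geq a$ by the recursion, and $\theta_i\in D_\tau$ precisely means $\theta_i<\floor{\theta_i}+\tau$. This proves Lemma~\ref{lem:theta_0}.

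Although the statement is just this bare existence claim, it will matter for the next step of the overall argument (perturbing $\varepsilon$ to a strictly positive value through Lemma~\ref{lem:theta_epsilon} to get a nontrivial bound) that the constructed sequence actually fits in the time window $[0,p_a+1]$, that is, $\theta_{N_a}\leq\lceil N_a a\rceil$. In the sub-case $a\leq 1-\tau$ this is transparent: $t_0=0$ forces $\theta_1=0$, and by the very definition of $N_a$ the instants $0,a,2a,\dots,(N_a-1)a$ all belong to $D_\tau$, so no waits occur in the greedy algorithm, $\theta_i=(i-1)a$, and $\theta_{N_a}=(N_a-1)a<N_a a\leq p_a+1$. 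The complementary sub-case $a>1-\tau$ is the genuine obstacle: positive waiting times may appear whenever $\theta_{i-1}+a$ lands inside a night interval $[\tau,1)+\mathbb{N}$, and one has to check that the accumulated waits never push $\theta_{N_a}$ past $\lceil N_a a\rceil$. I expect to handle this by tracking the fractional part of $\theta_i$ alongside that of $ia$ step by step, invoking once more the combinatorial content of the definition of $N_a$ as the first $k$ for which $ka\in W_\tau$.
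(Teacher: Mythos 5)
Your greedy construction is essentially the same as the one the paper uses (the paper takes $\theta_i = t_i(1-\tau)$, the successive division times of a cell of age $1-\tau$ at time $0$, which is exactly your greedy schedule). The three listed conditions of Lemma~\ref{lem:theta_epsilon} with $\varepsilon=0$ are indeed automatic from this construction, and you have correctly identified that the real content of the lemma is the unstated but essential bound $\theta_{N_a}\leq p_a+1=\lceil N_aa\rceil$, without which the conclusion of Lemma~\ref{lem:theta_epsilon} (which evaluates $I_i$ at time $p_a+1$ and needs the last division burst to have occurred by then) cannot be reached. Your treatment of the sub-case $a\leq 1-\tau$ is correct.

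However, you explicitly stop short of the sub-case $a>1-\tau$, which is precisely the nontrivial part of the lemma, so this is a genuine gap. The paper closes it through Lemma~\ref{lem:tNa}, whose mechanism is worth recording: set $i_0:=\inf\{i:\,t_i(x)\in\mathbb{N}\}$, i.e., the first step at which the greedy schedule is forced to wait (a wait always produces an integer, and conversely). If $i_0\neq N_a$, then one shows $t_{N_a}(x)\notin\mathbb{N}$; this uses that every positive integer lies in $W_\tau$ (since $1\in[\tau,1]$), so by the minimality of $N_a$ none of $a,2a,\dots,(N_a-1)a$ can be an integer, hence $t_{i_0}(x)+(N_a-i_0)a\notin\mathbb{N}$ when $i_0<N_a$, and $N_aa-x\notin\mathbb{N}$ when $i_0>N_a$; since by monotonicity $t_{N_a}(x)\leq t_{N_a}(0)=p_a+1$ which \emph{is} an integer, the strict inequality $t_{N_a}(x)<p_a+1$ follows. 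If $i_0=N_a$, then no waiting occurred before step $N_a$, so $t_{N_a-1}(x)=(N_a-1)a-x$, and the hypothesis $x>1-\tau$ gives $N_aa-x<p_a+\tau<p_a+1$ directly. The ``tracking fractional parts step by step'' plan you sketch would likely become unwieldy; the paper's trick of reducing everything to whether $t_{N_a}(x)$ can be an integer is the clean route and you should use it.
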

The following $t_k$'s define the dynamics of successive division times of a cell initially of age $x$  changing generation/dividing whenever it reaches age $a$ at a time which is allowed. These will be used afterwards through the construction of $\theta_i$.
\begin{align}
\label{eq:t_i}
t_0(x)=0,\qquad t_{1}(x)=\left\lbrace \begin{array}{l} 
0 \;\text{if } x\geq a,\\
a-x\quad\text{if }a-x\not\in [\tau,1[+\mathbb{N},\\
\lceil a-x\rceil \quad\text{otherwise}\end{array}\right.\\
 t_{i+1}(x)=\left\lbrace \begin{array}{l} t_{i}(x)+a\quad\text{if }t_i(x)\not\in [\tau,1[+\mathbb{N},\\
\lceil t_i(x)+a\rceil \quad\text{otherwise}\end{array}\right.
\nonumber
\end{align}
We now describe the behaviour of the $t_k$'s. First of all, from the definition of $N_a,p_a$, we have the following 
\begin{lemma}
The sequence $t_i$ satisfies the following properties :
\begin{itemize}
\item for $i<N_a$, $t_i(0)=ia$, $t_{N_a}(0)=p_a+1$,
\item for $x>a$, for any $i\geq 0$, $t_{i+1}(x)=t_{i}(0)$,
\item $x\mapsto t_i(x)$ is nonincreasing for any $i$,
\item $\forall i,$ $t_i(x)\leq t_i(0)\leq t_{i+1}(x)$.
\end{itemize} 
\end{lemma}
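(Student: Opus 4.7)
The quantity $t_i(x)$ captures the successive division times of a cell starting at age $x$ at time $0$ in the deterministic jump process of Section~\ref{sec:derivation}. I will work with the ``next-permitted-time'' operators
\[
\phi_0(s):=\min\{u\geq s:\ u\in [0,\tau)+\mathbb{N}\},\qquad \phi(t):=\phi_0(t+a),
\]
both well-defined on $\mathbb{R}_+$ (the set of allowed times is unbounded and closed from the left on each period) and both nondecreasing. With this notation, $t_1(x)=\phi_0((a-x)_+)$ using the convention $\phi_0(0)=0$, and $t_{i+1}(x)=\phi(t_i(x))$ for $i\geq 1$. All four items are then proved by induction on $i$.

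\textbf{Items 1 and 2.} For item~1, a direct induction shows $t_i(0)=ia$ as long as $ia\notin W_\tau=[\tau,1]+\mathbb{N}$, which by definition of $N_a$ is exactly the range $i<N_a$. At the terminal step, $N_a a\in W_\tau$ by minimality of $N_a$, hence $t_{N_a}(0)=\phi_0(N_a a)$; a case split on whether $N_a a$ lies in the open interior $(\tau,1)+\mathbb{N}$, equals $\tau+n$ for some $n\in\mathbb{N}$, or is itself an integer $n$, verifies in each case that $\phi_0(N_a a)=\lceil N_a a\rceil=p_a+1$. For item~2, the crucial observation is that $t_1(x)=0=t_0(0)$ as soon as $x\geq a$; from this initial matching, the recursion defining $t_{i+1}(x)$ becomes identical to the one defining $t_i(0)$, so a trivial induction yields $t_{i+1}(x)=t_i(0)$ for every $i\geq 0$.

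\textbf{Item 3.} For $i=1$, the map $x\mapsto\phi_0((a-x)_+)$ is nonincreasing because $x\mapsto(a-x)_+$ is nonincreasing and $\phi_0$ is nondecreasing; the two pieces of the piecewise definition of $t_1$ agree at $x=a$ since $\phi_0(0)=0$. For $i\geq 1$, if $t_i(\cdot)$ is nonincreasing then $t_{i+1}(\cdot)=\phi\circ t_i(\cdot)$ is the composition of a nonincreasing map with a nondecreasing one, hence nonincreasing.

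\textbf{Item 4.} The first inequality $t_i(x)\leq t_i(0)$ is immediate from item~3 applied to $x\geq 0$. For the second inequality $t_i(0)\leq t_{i+1}(x)$, the case $x\geq a$ is exactly item~2 and the case $i=0$ is trivial, so we may assume $x<a$ and $i\geq 1$. For $i\geq 2$, $t_{i+1}(x)=\phi(t_i(x))\geq\phi(t_{i-1}(0))=t_i(0)$ using the inductive hypothesis $t_{i-1}(0)\leq t_i(x)$ together with monotonicity of $\phi$. The delicate base case is $i=1$, $x<a$, where one chains
\[
t_2(x)=\phi(\phi_0(a-x))\geq\phi(a-x)=\phi_0(2a-x)\geq\phi_0(a)=t_1(0),
\]
using in order $\phi_0(a-x)\geq a-x$ with $\phi$ nondecreasing, the very definition $\phi(u)=\phi_0(u+a)$, and $2a-x\geq a$ with $\phi_0$ nondecreasing. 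I expect this base case to be the main obstacle, since it is the one place where the two operators $\phi_0$ (arbitrary waiting from age $x$ at time $0$) and $\phi$ (steady-state step of duration $a$ from age $0$) must be matched by hand; once this compatibility is established, the rest of the argument is driven purely by monotonicity.
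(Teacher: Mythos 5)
Your proof is correct and takes essentially the same route as the paper, which merely asserts that the four items are immediate consequences of the construction; you supply the missing details via the clean operator formalism $\phi_0,\phi$. (Your $\phi(t)=\phi_0(t+a)$ corresponds to the intended reading of the recursion~\eqref{eq:t_i}, in which the branching test should be applied to $t_i(x)+a$ rather than to $t_i(x)$ --- a small typo in the displayed formula, as one sees by checking $t_{N_a}(0)=p_a+1$ for, say, $\tau=1/2$, $a=0.3$.) One simplification worth noting: the $i=1$ case of item~4 is not actually delicate, and the hand-crafted chain $\phi(\phi_0(a-x))\geq\phi(a-x)=\phi_0(2a-x)\geq\phi_0(a)$ is unnecessary. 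Since $t_1(0)=\phi_0(a)=\phi(0)=\phi(t_0(0))$, the recursion $t_{i+1}(0)=\phi(t_i(0))$ holds down to $i=0$, so the same one-line argument you use for $i\geq 2$ works verbatim at $i=1$: $t_2(x)=\phi(t_1(x))\geq\phi(t_0(0))=t_1(0)$, using $t_0(0)=0\leq t_1(x)$ and monotonicity of $\phi$. With that observation, the second inequality of item~4 is a single uniform induction on $i\geq 1$.
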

\proof The first point is an immediate consequence of the construction of $N_a,p_a$. The second point follows from the fact that if $x\geq a$, then $t_1(x)=0$, 
and of the definition of $N_a$. The last two points are straightforward.\qed

An important property of the sequence $t_i$ is given in the 
\begin{lemma}\label{lem:tNa}
Suppose $x> 1-\tau$ or ($x=1-\tau$ and $N_a a\not=p_a+1$), then we have $$t_{N_a}(x)<p_a+1.$$
\end{lemma}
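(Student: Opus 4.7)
The plan is to proceed by case analysis on $x$. When $x\geq a$, the previous lemma yields $t_{i+1}(x)=t_i(0)$ for every $i\geq 0$, so $t_{N_a}(x)=t_{N_a-1}(0)=(N_a-1)a$, which is strictly less than $N_a a\leq p_a+1$. The interesting case is therefore $1-\tau\leq x<a$, which forces $a>1-\tau$.

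For this case I would first show by induction that $t_i(x)=ia-x$ for $1\leq i\leq N_a$ as long as no rounding occurs in the recurrence defining $t_i$. In the typical regime $a\leq 1$, one has $a-x\in(0,\tau)$ because $x\geq 1-\tau$, so $t_1(x)=a-x$ and the induction proceeds step by step. If no rounding happens up to index $N_a$, then $t_{N_a}(x)=N_a a-x<p_a+1$ and we are done. The case $a>1$ is handled analogously by exploiting the $1$-periodicity of the environment.

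Otherwise, let $i^*\in\{1,\dots,N_a\}$ be the first index at which $i^*a-x$ falls in $[j+\tau,j+1)$ for some $j\in\mathbb{N}$. The rounding rule then forces $t_{i^*}(x)=j+1$ and the left inequality yields $j\leq i^*a-x-\tau$. The decisive observation is that $t_{i^*}(x)=j+1$ is an integer, so by $1$-periodicity of the environment the remaining trajectory is a time-translate of the one started from age $0$ at time $0$: $t_{i^*+k}(x)=(j+1)+t_k(0)$ for every $k\geq 0$. Since $t_k(0)=ka$ with no rounding whenever $k<N_a$, setting $k=N_a-i^*$ gives
\[
t_{N_a}(x)=(j+1)+(N_a-i^*)a\leq i^*a-x-\tau+1+(N_a-i^*)a=p_a+r'-x-\tau+1,
\]
where $r':=N_a a-p_a\in[\tau,1]$. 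The strict bound $t_{N_a}(x)<p_a+1$ is thus equivalent to $r'<x+\tau$; since $r'\leq 1$ and $x\geq 1-\tau$, the only violation is $r'=1$ together with $x=1-\tau$, i.e.\ exactly when $N_a a=p_a+1$ and $x=1-\tau$, which is precisely the case excluded from the hypothesis.

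The main subtlety is the passage through rounding steps: a priori one might fear that several successive roundings would wipe out the advance $t_i(0)-t_i(x)>0$ accumulated at earlier steps. The $1$-periodicity resolves this, since any rounding lands on an integer and thereby resets the $x$-trajectory to a shifted copy of the $0$-trajectory, which itself does not round before index $N_a$. The hypothesis $x>1-\tau$ (or its edge-case refinement at $x=1-\tau$) provides exactly the slack needed to prevent the final division from pushing $t_{N_a}(x)$ all the way up to $p_a+1$.
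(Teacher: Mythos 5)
Your proof is correct, and it takes a somewhat different route from the paper's. Both proofs split off the trivial case $x\geq a$, and both rest on the same underlying observation that once the trajectory $t_i(x)$ hits an integer it becomes an integer translate of the reference trajectory $t_k(0)$. But the mechanisms for extracting the \emph{strict} inequality differ. The paper introduces $i_0$, the first index with $t_{i_0}(x)\in\mathbb{N}$, and argues a trichotomy: if $i_0\neq N_a$ it shows $t_{N_a}(x)\notin\mathbb{N}$ while $t_{N_a}(0)=p_a+1\in\mathbb{N}$ and $t_{N_a}(x)\leq t_{N_a}(0)$, so the inequality must be strict; if $i_0=N_a$ it uses the bound $N_a a - x< p_a+\tau$ coming from $x>1-\tau$. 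You instead locate $i^*$, the first index at which rounding actually occurs, note $j+1\leq i^*a-x-\tau+1$ from the lower edge of the dark interval, and obtain the single quantitative estimate $t_{N_a}(x)\leq N_a a - x-\tau+1$; comparing with $p_a+1$ reduces everything to $r'<x+\tau$, where $r'=N_a a-p_a\in[\tau,1]$. Your route is more uniform (one bound covers all sub-cases) and it cleanly accounts for the edge case $x=1-\tau$, $N_a a\neq p_a+1$, which the paper's written argument handles only implicitly (its proof invokes $x>1-\tau$). A small imprecision: the clause ``$t_{N_a}(x)<p_a+1$ is thus equivalent to $r'<x+\tau$'' should read ``is implied by,'' since you only have $t_{N_a}(x)\leq N_a a-x-\tau+1$; this does not affect the conclusion.
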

\proof Firstly, this is obvious if $x\geq a$ since in this case $t_{N_a}(x)=t_{N_a-1}(0)$. If $x<a$, we use the following 
\begin{lemma}
Suppose $x<a$, let the $t_i(x)$ be defined by~\eqref{eq:t_i}.
Define 
$$
i_0:=\inf_{t_{i}(x)\in\mathbb{N}} i.
$$
Then we have 
\begin{eqnarray*}
\forall n<N_a,&& t_{i_0+n}(x)=t_{i_0}(x)+na,\\
\forall i<i_0,&& t_i(x)=ia-x,\\
t_{i_0}(x)=\lceil i_0 a-x\rceil &&
\end{eqnarray*}
\end{lemma}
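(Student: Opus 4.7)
The plan is to establish the three assertions in the order (ii), (iii), (i), since each builds on the previous. The guiding intuition is that, as long as the arithmetic progression $a-x,2a-x,\dots,ia-x$ of ``natural'' division times stays clear of the forbidden region $[\tau,1)+\mathbb{N}$, no rounding is triggered and the recursion produces exactly this progression; the first index at which such a term lies in $[\tau,1)+\mathbb{N}$ is also the first index at which $t_i(x)\in\mathbb{N}$, namely $i_0$; beyond $i_0$, the sequence $t_{i_0},t_{i_0}+a,\dots$ again avoids the forbidden region for exactly $N_a-1$ further steps, by definition of $N_a$. Throughout, the middle assertion should be read for $1\leq i<i_0$, since $t_0(x)=0\neq -x$ when $x>0$.

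For assertion (ii), I would use induction on $i$. For the base case $i=1$: $i<i_0$ forces $t_1(x)\notin\mathbb{N}$, which rules out the ceiling clause of the definition of $t_1$ (that clause would produce $\lceil a-x\rceil\in\mathbb{N}$), so $t_1(x)=a-x$. For the inductive step, the hypothesis $t_{i-1}(x)=(i-1)a-x\notin\mathbb{N}$ (true since $i-1<i_0$) combined with the recursion gives either $t_i(x)=t_{i-1}(x)+a=ia-x$, or $t_i(x)=\lceil ia-x\rceil\in\mathbb{N}$; the second alternative would contradict $i<i_0$.

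Assertion (iii) follows analogously. When $i_0=1$, the direct definition of $t_1$ gives $t_1(x)=\lceil a-x\rceil$ in both possible cases (either $a-x$ is an integer and so equals its own ceiling, or the ceiling clause is active). When $i_0\geq 2$, use (ii) to write $t_{i_0-1}(x)=(i_0-1)a-x$ and apply the recursion: either $t_{i_0}(x)=i_0 a-x$, necessarily an integer by definition of $i_0$, hence equal to $\lceil i_0 a-x\rceil$; or $t_{i_0}(x)=\lceil(i_0-1)a-x+a\rceil=\lceil i_0 a-x\rceil$.

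For assertion (i), I would proceed by induction on $n$, with $n=0$ trivial. The inductive step reduces to showing $t_{i_0+n}(x)\notin[\tau,1)+\mathbb{N}$ for $0\leq n\leq N_a-2$, so that the recursion yields $t_{i_0+n+1}(x)=t_{i_0+n}(x)+a$. By the inductive hypothesis $t_{i_0+n}(x)=\lceil i_0 a-x\rceil+na$, whose fractional part coincides with that of $na$. The hard point---really the only nontrivial one---is to reconcile the half-open interval $[\tau,1)$ appearing in the recursion with the closed interval $[\tau,1]$ defining $W_\tau$: by definition of $N_a$, for $1\leq n<N_a$ we have $na\notin W_\tau$, which (since positive integers belong to $W_\tau$ and so $na$ cannot be one here) forces the fractional part of $na$ into $(0,\tau)$; for $n=0$ this fractional part is zero; either way, $t_{i_0+n}(x)\notin[\tau,1)+\mathbb{N}$, closing the induction.
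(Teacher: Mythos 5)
Your proof is correct. The paper itself offers no argument here: its entire ``proof'' is the sentence \emph{``This follows immediately from the definition.''}\ Your three-step induction, in the order (ii), (iii), (i), is exactly the reasoning that that terse assertion suppresses. Your two side observations are also on point: the middle assertion must indeed be read for $1\le i<i_0$ (since $t_0(x)=0\ne-x$), and the only genuinely delicate step is reconciling the half-open window $[\tau,1)+\mathbb{N}$ of the recursion with the closed window $W_\tau=[\tau,1]+\mathbb{N}$ defining $N_a$; your argument (positive integers lie in $W_\tau$, so for $1\le n<N_a$ the point $na$ is not an integer, forcing its fractional part into $(0,\tau)$) handles this correctly.

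One remark worth flagging. You have read the recursion~\eqref{eq:t_i} literally, so the test in the inductive step of (i) is whether $t_{i_0+n}(x)\notin[\tau,1)+\mathbb{N}$, which leads you to track the fractional part of $na$. Semantically, and in order to be consistent with the paper's own subsequent claim that $t_{N_a}(0)=p_a+1=\lceil N_aa\rceil$ (rather than $N_aa$), the test in the recursion ought to be on the \emph{next candidate division time} $t_i(x)+a$; under that reading one would track the fractional part of $(n+1)a$ over the range $1\le n+1\le N_a-1$. The two bookkeepings are parallel and reach the same conclusion for this particular lemma, so your proof goes through either way, but the literal recursion is almost certainly a typo in the paper and it is worth noticing that the accompanying lemma on $t_i(0)$ would be false under the literal reading.
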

\proof This follows immediately from the definition.\qed

As a consequence of this lemma, we know that if $i_0<N_a$ (taking $n=N_a-i_0$ in the lemma) or if $i_0>N_a$,    we have 
$$
t_{N_a}(x)\not\in\mathbb{N} \;\text{and}\; t_{N_a}(x)\leq t_{N_a}(0)\in\mathbb{N},
$$
therefore, in this case $t_{N_a}(x)<t_{N_a}(0)=p_a+1$. 

If $i_0=N_a$, then, since $x>1-\tau$, 
$$
N_aa-x\leq p_a+1-x<p_a+\tau.
$$
Therefore, we have necessarily $t_{N_a}(x)<p_a+\tau<p_a+1$ which ends the proof of Lemma~\ref{lem:tNa} and thereby of Lemma~\ref{lem:theta_0}. \qed

\subsubsection*{Step 3: Construction of the sequence $\theta_i$, with $\varepsilon>0$.}
Here, we use the notation $a_r=\sup\{a',\lambda^\infty(a')=\lambda^\infty(a)\}$ and $a_l=\inf\{a',\lambda^\infty(a')=\lambda^\infty(a)\}$. 

\begin{lemma}\label{lem:theta_thick}
Suppose there exists a sequence $\theta_1,\dots, \theta_{N_a}$ and $\frac{a_r-a}{2}\geq \varepsilon\geq 0$, such that 
$$
\left\lbrace\begin{array}{l}
\theta_1\geq \max(a-1+\tau,0),\\
\forall i, \; [\theta_i,\theta_i+\varepsilon]\subset [\lfloor\theta_i\rfloor,\lfloor\theta_i\rfloor+\tau],\\
\forall i<N_a,\; \theta_{i+1}\geq \theta_i+\varepsilon+a \enspace.
\end{array}\right.
$$
Then there exists a sequence $\theta'_1,\dots, \theta'_{N_a}$
$$
\left\lbrace\begin{array}{l}
\theta'_1\geq \max(a-1+\tau,0),\\
\forall i, \; [\theta'_i,\theta'_i+\varepsilon']\subset [\lfloor\theta'_i\rfloor,\lfloor\theta'_i\rfloor+\tau],\\
\forall i<N_a,\; \theta'_{i+1}\geq \theta'_i+\varepsilon'+a'.
\end{array}\right.
$$
with $\varepsilon'=\min(\varepsilon+\frac{a-a'}{2},\tau)$.
\end{lemma}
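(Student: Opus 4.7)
The plan is to construct the thicker sequence by symmetrically expanding each tube $[\theta_i, \theta_i + \varepsilon]$ into the slack that becomes available when $a$ is relaxed to $a'$. The slack has two sources: the spacing constraint relaxes from $a$ down to $a'$ (freeing up $a - a'$ per gap), and each tube has room inside its day $[\lfloor \theta_i\rfloor, \lfloor \theta_i\rfloor+\tau]$. The recipe is to shift each $\theta_i$ to the left by the width increment $\varepsilon' - \varepsilon$, clipping at the start of the current day whenever this shift would cross an integer boundary:
\[
\theta'_i := \max\bigl(\theta_i - (\varepsilon' - \varepsilon),\, \lfloor \theta_i\rfloor\bigr), \qquad i = 1, \dots, N_a.
\]
Note that by the very definition of $\varepsilon'$, one has the fundamental bound $\varepsilon' - \varepsilon \leq (a-a')/2$, and also $\varepsilon' \leq \tau$.

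I would then verify the three required properties in turn. The day-containment is essentially automatic: since $\lfloor \theta_i\rfloor \leq \theta'_i \leq \theta_i < \lfloor \theta_i\rfloor + 1$, one has $\lfloor \theta'_i\rfloor = \lfloor \theta_i\rfloor$, and the right endpoint satisfies
\[
\theta'_i + \varepsilon' \leq \max\bigl(\theta_i + \varepsilon,\; \lfloor \theta_i\rfloor + \varepsilon'\bigr) \leq \lfloor \theta_i\rfloor + \tau,
\]
using the original containment $\theta_i + \varepsilon \leq \lfloor \theta_i\rfloor + \tau$ and $\varepsilon' \leq \tau$. The initial condition $\theta'_1 \geq \max(a' - 1 + \tau, 0)$ follows in the same vein: the nonnegativity is from $\theta'_1 \geq \lfloor \theta_1\rfloor \geq 0$, and $\theta'_1 \geq \theta_1 - (\varepsilon' - \varepsilon) \geq (a - 1 + \tau) - (a-a')/2 \geq a' - 1 + \tau$.

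The spacing condition $\theta'_{i+1} - \theta'_i \geq a' + \varepsilon'$ is where the symmetric splitting pays off, and it is the only step where the two budgets interact. Since both branches of the defining $\max$ satisfy $\theta'_{i+1} \geq \theta_{i+1} - (\varepsilon' - \varepsilon)$, and since $\theta'_i \leq \theta_i$, the original spacing $\theta_{i+1} - \theta_i \geq a + \varepsilon$ yields
\[
\theta'_{i+1} - \theta'_i - \varepsilon' \;\geq\; (a + \varepsilon) - (\varepsilon' - \varepsilon) - \varepsilon' \;=\; a - 2(\varepsilon' - \varepsilon) \;\geq\; a - (a-a') \;=\; a',
\]
the final step being precisely where the bound $\varepsilon' - \varepsilon \leq (a-a')/2$ enters. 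I expect this chain, together with the minor bookkeeping verifying that the clipping at $\lfloor \theta_i\rfloor$ does not spoil the argument (which is why both branches of the $\max$ must be handled uniformly in the bound $\theta'_{i+1} \geq \theta_{i+1} - (\varepsilon'-\varepsilon)$), to be the only genuinely non-routine point; the rest of the verification is essentially formal rearrangement.
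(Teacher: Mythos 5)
Your construction is correct and is essentially the paper's own: the paper sets $\theta'_i = \max(\lfloor\theta_i\rfloor,\,\theta_i - \tfrac{a-a'}{2})$, shifting by the full budget $\tfrac{a-a'}{2}$ and then clipping, whereas you shift by $\varepsilon'-\varepsilon = \min(\tfrac{a-a'}{2},\tau-\varepsilon)$, which coincides with the paper's shift except in the clamped case $\varepsilon'=\tau$; in either variant the same three verifications go through by the same algebra. The approach and the key inequality $\varepsilon'-\varepsilon\leq(a-a')/2$ are identical to the paper's, so there is nothing substantively different to report.
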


\proof We start the proof with an obvious but necessary remark : if $\lambda^\infty $ is constant on $]a_l,a_r[$, then 
\begin{itemize}
\item $\varepsilon\leq \tau$
\item $a_r-a_l\leq \tau$
\end{itemize}
We build the sequence $\theta_i'$ following this procedure :
$$
\theta_i'=\max(\lfloor \theta_i\rfloor,\theta_i-\frac{a-a'}{2}).
$$
With this construction, we have $\theta'_1\geq a'-1+\tau$ and since $\varepsilon'-\frac{a-a'}{2}\leq \varepsilon$, 
$$
\theta_i'+\varepsilon'\leq \max(\lfloor \theta_i\rfloor+\varepsilon',\theta_i+\varepsilon)\leq \lfloor \theta_i\rfloor+\tau.
$$
Note that we also have $\theta'_i+\varepsilon'\leq \theta_i+\varepsilon+\frac{a-a'}{2}$. This helps us to check the last property:
$$
\theta'_{i+1}-\theta'_i-a'-\varepsilon'\geq \theta_{i+1}-\frac{a-a'}{2}-\theta_i-a'-\varepsilon-\frac{a-a'}{2}=\theta_{i+1}-\theta_i-\varepsilon\geq 0
\enspace .
$$
This ends the proof of the lemma.\qed

To complete the proof of the theorem, we combine the previous arguments. If $\lambda^\infty$ is constant on $[a_l,a_r[$, then for $a<a_r$ we can build a sequence $\theta_i$ as in Lemma~\ref{lem:theta_epsilon} for any $\varepsilon <(a_r-a/2)$ (we first build a sequence for $a_r-0$ and $\varepsilon=0$ and then use Lemma~\ref{lem:theta_thick} to build a sequence with $\varepsilon=a_r-a-0$). Using Lemma~\ref{lem:theta_epsilon}, we get the inequality \eqref{eq:taux} for any $\varepsilon <(a_r-a/2)$. By a standard continuity argument, the same is true for $\varepsilon =(a_r-a/2)=r(a,\tau)$ as defined in \eqref{eq:devil_bounds}. Finally, taking the logarithm of~\eqref{eq:taux} with $\varepsilon=r(a,\tau)$, we get
$$
\lambda^\kappa(a)\geq \lambda^\infty(a)\frac{\log(2-e^{-\kappa r(a,\tau})}{\log 2}.
$$
This leads to~\eqref{eq:devil_bounds} thanks to the following inequality,
which is a consequence of the concavity of the function $\log$:
$$
\forall x\in[0,1], \qquad \log (2-x)\geq (\log 2)(1-x) \enspace .
$$
This ends the proof of the theorem.\qed 

\subsection{Numerical illustrations}\label{sec:num}
We next illustrate Theorem~\ref{Th:devil} by numerical experiments,
that we produced using again monotone finite difference schemes
(see the discussion concerning Figure~\ref{fig:tau05v13510} above).
It should be noted that the bound of the rate of convergence given
in this theorem vanishes when $N_aa$ is a multiple of the period $T$.
This is confirmed by the numerical experiments: the corresponding
values of $a$ are discontinuity points of the staircase function $\lambda^\infty(a)$, and the convergence is seen to be slow at the left of each of these
points.


\begin{figure}[htbp!]
\centering
	\includegraphics[width=0.50\textwidth]{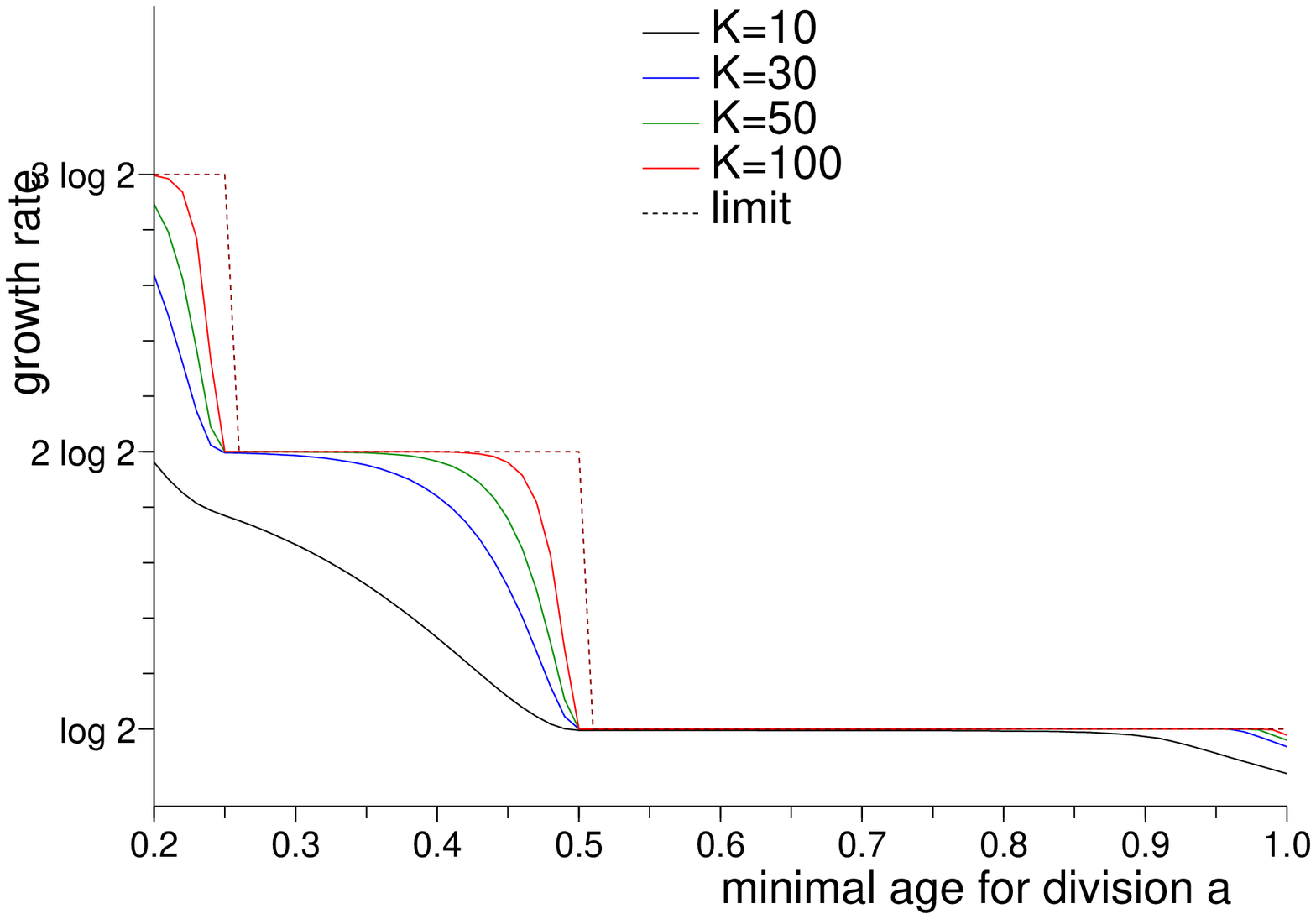}
	\caption{Convergence of the Floquet eigenvalue, when $\tau=1/2$. The curves $a\mapsto \lambda(a)$ are shown for $\kappa=10,30,50,100$. The limit $\lambda^\infty(a)$ is a staircase function.}
	\label{fig:tau05v13510}
\end{figure}
\begin{figure}[htbp!]
	\centering
		\includegraphics[width=0.50\textwidth]{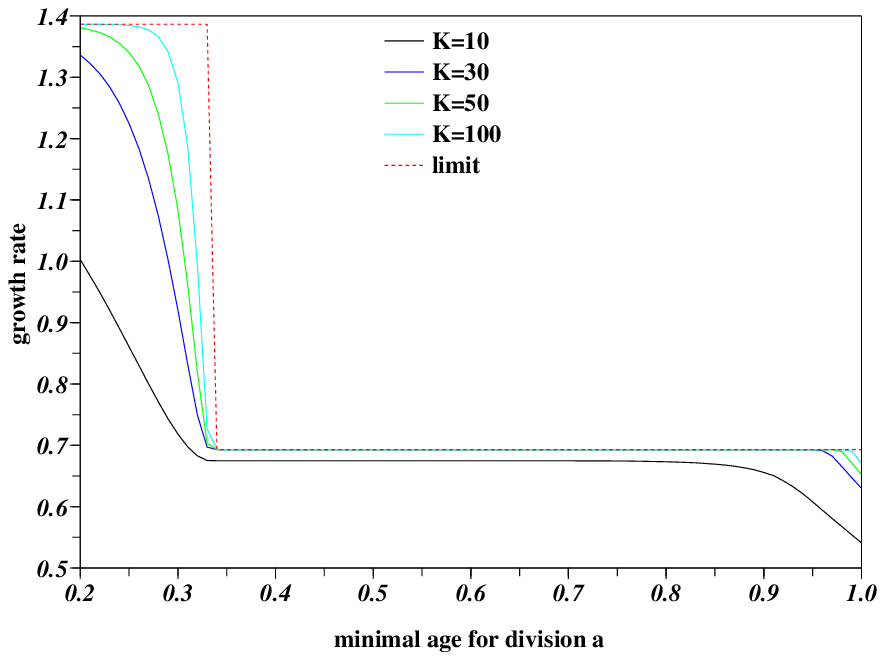}
	\caption{Convergence when $\tau=1/3$. Same conventions as in Figure~\ref{fig:tau05v13510}.}
	\label{fig:figure_devil033}
\end{figure}
%
%
%
%
%
%
\appendix
\section{Appendix: Existence theory for the Floquet eigenvalue}
%
%
%
%
%
%
We shall use the following notation:
\begin{itemize}
\item $\tau_h B=x\mapsto B(x+h)$, extended to take the value zero on $[0,h]$ if $h<0$,
\item $L^\infty_{\text{per}}(0,T,X)$ is the space of bounded $T$-periodic functions taking values in $X$, similarly, $C_{\text{per}}(0,T,X)$ is the space of $T$-periodic continuous (with respect to the time variable) functions taking values in $X$.
\end{itemize}

\begin{theorem}\label{Th:Existence_Floquet}
Assume that $a>0, \scaling>0$,  that $\psi$ is nonnegative, bounded, not identically zero  and that $B$ positive, bounded, satisfying 
\begin{equation}\label{as:integrale_B=infinity}
\forall t, \qquad \displaystyle\int_0^\infty \psi(t-x)B(x)dx=\displaystyle\int_0^\infty \psi(t+x)B(x)dx=+\infty .
\end{equation} 
Then, there exists a unique $\lambda_F>0$ such that there exists $(N,\phi)$ in $ L^{\infty}_{\text{per}}(0,T,L^1(\mathbb{R}^+))\times C_{\text{per}}(0,T,L^\infty(\mathbb{R}^+))$ satisfying (\ref{eq:eigendirect}-\ref{eq:eigendual}-\ref{eq:normalisation}). Furthermore, $\phi$ is unique.
\end{theorem}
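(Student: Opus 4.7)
The plan is to reduce the eigenproblem to a one-dimensional renewal-type equation for the birth rate $b(t):=N(t,0)$, apply the Krein-Rutman theorem to obtain a principal eigenvalue, and then reconstruct $N$ and $\phi$ by integration along characteristics. Integrating the first equation of \eqref{eq:eigendirect} along the characteristic $x\mapsto(t-x+y,y)$ gives, for a $T$-periodic nonnegative function $b$,
\[
N(t,x)=b(t-x)\exp\!\Big(-\lambda x-\int_0^x K(t-x+y,y)\,dy\Big),
\]
so the boundary condition rewrites as the fixed point equation $b=\mathcal{T}_\lambda b$, where
\[
(\mathcal{T}_\lambda b)(t)=2\scaling\psi(t)\int_a^\infty B(x)b(t-x)\exp\!\Big(-\lambda x-\scaling\!\int_a^x\!\psi(t-x+y)B(y)\,dy\Big)dx.
\]
Thus it suffices to exhibit a unique $\lambda_F>0$ such that $\mathcal{T}_{\lambda_F}$ has spectral radius $1$ on an adequate space of periodic functions.

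First I would fix a Banach space, for instance $X=L^1(\mathbb{R}/T\mathbb{Z})$, and verify that for every $\lambda\in\mathbb{R}$ the operator $\mathcal{T}_\lambda:X\to X$ is bounded, positive, and compact. Compactness can be obtained from the smoothing factor $\psi(t)$ on the left and the fact that the kernel $(t,x)\mapsto 2\scaling\psi(t)B(x)\exp(-\lambda x-\scaling\int_a^x\psi(t-x+y)B(y)dy)$ decays rapidly in $x$ under assumption \eqref{as:integrale_B=infinity}, which makes the problem effectively one of a kernel with uniformly integrable tails and a Kolmogorov–Riesz/Fréchet–Kolmogorov argument. I would then show irreducibility: iterating $\mathcal{T}_\lambda$ a finite number of times produces a strictly positive kernel, because for any $t$ in the support of $\psi$ one can reach any $t'$ by travelling along characteristics through enough ages $x\ge a$, using again \eqref{as:integrale_B=infinity} to guarantee that the exponential weight stays positive. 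Krein-Rutman then yields a simple positive eigenvalue $\rho(\lambda)$ equal to the spectral radius, with a positive eigenfunction $b_\lambda$ that is unique up to scaling.

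Next I would study the map $\lambda\mapsto\rho(\lambda)$. Continuity follows from continuity of $\mathcal{T}_\lambda$ in the operator norm; strict monotonicity follows from the fact that increasing $\lambda$ strictly decreases the kernel pointwise, combined with the positivity of the dual eigenfunction (Collatz–Wielandt characterization). The limit $\rho(\lambda)\to 0$ as $\lambda\to+\infty$ follows from dominated convergence. At $\lambda=0$, I would show $\rho(0)>1$ by testing $\mathcal{T}_0$ on the constant function $\mathbf{1}$: integrating $\mathcal{T}_0\mathbf{1}$ over a period and swapping integrals yields $\int_0^T 2\scaling\psi(t)\int_a^\infty B(x)\exp(-\scaling\int_a^x\psi(t-x+y)B(y)dy)\,dx\,dt$; the inner integral is twice $(1-\exp(-\scaling\int_a^\infty\psi(t-x+y)B(y)dy))=2$ in the limit thanks to \eqref{as:integrale_B=infinity}, showing $\int \mathcal{T}_0\mathbf{1}\ge 2\int\mathbf{1}$ up to a negligible correction, hence $\rho(0)\ge 2>1$. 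The intermediate value theorem then produces a unique $\lambda_F>0$ with $\rho(\lambda_F)=1$.

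With $\lambda_F$ and $b=b_{\lambda_F}$ in hand, I would define $N(t,x)$ by the characteristic formula above, checking that $N\in L^\infty_{\text{per}}(0,T;L^1(\mathbb{R}^+))$: boundedness in $t$ of $\|N(t,\cdot)\|_{L^1}$ follows because $b$ is bounded (Krein-Rutman gives an eigenfunction in $L^\infty$ after one iteration of $\mathcal{T}_{\lambda_F}$, which is regularizing since $\psi$ is bounded), and $L^1$-integrability in $x$ follows from the $e^{-\lambda_F x}$ factor. The dual problem \eqref{eq:eigendual} is treated symmetrically: integrating along the reversed characteristic gives
\[
\phi(t,x)=2\int_x^\infty K(t+s-x,s)\phi(t+s-x,0)\exp\!\Big(-\lambda_F(s-x)-\!\int_x^s K(t+\sigma-x,\sigma)d\sigma\Big)ds,
\]
reducing the problem to a fixed point for $\phi(\cdot,0)$; the same $\lambda_F$ works because the adjoint operator shares its spectrum, and Krein-Rutman yields a positive eigenfunction, unique up to scalar. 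Normalization \eqref{eq:normalisation} fixes the scalings. Finally, uniqueness of $\lambda_F$ follows from strict monotonicity of $\rho$, and uniqueness of $\phi$ (given the normalization) follows from the simplicity part of Krein-Rutman. The main technical obstacle is the compactness/irreducibility of $\mathcal{T}_\lambda$, where assumption \eqref{as:integrale_B=infinity} is essential to ensure that the contribution from large $x$ is negligible and that the Perron eigenfunction is strictly positive on the support of $\psi$.
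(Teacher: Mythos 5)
Your plan takes a genuinely different route from the paper, but it has real gaps that the paper's choice of operator and function space is designed to avoid.

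The paper does \emph{not} run Krein--Rutman on the operator for the birth rate $b(t)=N(t,0)$ (your $\mathcal{T}_\lambda$, which is their $\LL_1^\lambda$). Instead it passes to $P(t)=\int_a^\infty B(x)N(t,x)\,dx$ and to $\phi(t,0)$, yielding operators $\LL_2^\lambda$, $\LL_3^\lambda$ in which $\psi$ only appears \emph{inside} the integral, composed with a translation. That structural change is the point of the whole appendix, because $\psi$ is assumed merely bounded, possibly discontinuous. Your $\mathcal{T}_\lambda$ has $\psi(t)$ sitting outside the integral as a pointwise multiplier. Calling it a ``smoothing factor'' is incorrect: multiplication by a bounded, discontinuous $\psi$ destroys continuity, which is precisely why the paper cannot (and does not) run the argument on $C_{\text{per}}$ with $\LL_1^\lambda$. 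Your retreat to $L^1(\mathbb{R}/T\mathbb{Z})$ is a reasonable reaction, and Fr\'echet--Kolmogorov together with $L^1$-continuity of translation plausibly gives compactness there; but then the positive cone of $L^1$ has empty interior, so the strong form of Krein--Rutman (simplicity, strict positivity, uniqueness of the positive eigenvector) is not available and must be replaced by an irreducibility argument \`a la de Pagter. You gesture at irreducibility but do not supply it; the paper avoids the issue altogether by working on $C_{\text{per}}$ where the cone has interior, and proves $U_i>0$ directly from the kernel being strictly positive.

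The more serious gap is in the step $\rho(0)>1$. You claim that the inner integral in $\mathcal{T}_0\bbbone$ collapses to a clean constant ``thanks to \eqref{as:integrale_B=infinity}''. It does not: in $\exp\big(-\kappa\int_a^x\psi(t-x+y)B(y)dy\big)$ the upper limit $x$ appears both as the limit of integration and inside $\psi(t-x+y)$, so the fundamental-theorem-of-calculus cancellation fails for fixed $t$. Only the double integral $\int_0^T \mathcal{T}_0\bbbone\,dt$ simplifies (after the change of variable $t_0=t-x$) to $2T$. But $\int \mathcal{T}_0\bbbone=2\int\bbbone$ alone does \emph{not} imply $\rho(\mathcal{T}_0)\geq 2$: pairing with $\bbbone$ is pairing with a dual vector, not with the dual Perron eigenvector, and a positive operator can have $\langle T\bbbone,\bbbone\rangle=2\langle\bbbone,\bbbone\rangle$ while $\rho(T)<2$. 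The paper resolves this neatly by switching to $\LL_3^\lambda$ acting on $\phi(\cdot,0)$: there the exponent $\int_a^x\psi(t+s)B(s)ds$ no longer carries an $x$-dependence inside $\psi$, so $\bbbone$ is an \emph{exact} fixed vector of the limiting operator, $\LL_3^0\bbbone=2\cdot\bbbone$, which immediately pins $\rho(\lambda)\to 2$ as $\lambda\to 0$. They then transfer this to $\LL_2^\lambda$ by the duality identity $\rho_2=\rho_3$ and to $\LL_1^\lambda$ via $\LL_1^\lambda(\psi U_2)=\rho_2\psi U_2$. To repair your proposal you would need either to reproduce this passage to the adjoint operator, or to establish $\rho(0)>1$ by a genuine Collatz--Wielandt lower bound against a strictly positive dual test function, neither of which is in your plan.
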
 \vspace{0.5cm}

Before starting the proof, we give a few remarks on the hypotheses:

\begin{itemize}
\item we do not need regularity assumptions on the function $\psi$,
\item when $\min(\psi)>0$, the condition (\ref{as:integrale_B=infinity}) is equivalent to $\displaystyle\int_0^\infty B=+\infty$, in the case $\min(\psi)=0$ it is for instance satisfied if $\min(B)>0$ or at least  $\text{liminf}_{+\infty} B>0$, it is not optimal but other conditions would need assumptions on $a$ and $\scaling$. It could be replaced for instance by 
$$ \forall t, \qquad \int_a^\infty \scaling\psi(t-x)B(x)dx,\int_a^\infty \scaling\psi(t+x)B(x)dx>\log 2,$$
but, as we are studying asymptotic properties, we rather restrict the study to a case where existence does not depend on $a$ nor on $\scaling$,
\end{itemize}

The proof of Theorem~\ref{Th:Existence_Floquet} is based on the method of characteristics and on the Krein-Rutman Theorem, as in~\cite{MMP}, but we need more precisions in order to relax the regularity assumptions on $\psi$. For the sake of simplicity we take $\scaling=1$. We set $P(t):=\int_a^\infty B(x)N(t,x)dx$.  Using the methods of characteristics, we have the following integral equations 
\begin{eqnarray*}
N(t,0)&=& 2\int_a^\infty \psi(t)B(x)N(t-x,0)\exp\big(-\lambda x-\int_a^x \psi(t-x+s)B(s)ds\big)dx\\ &=:&\LL_1^\lambda (N(\cdot,0))(t),\\
P(t)&=& 2\int_a^\infty \psi(t-x)B(x)P(t-x)\exp\big(-\lambda x-\int_a^x \psi(t-x+s)B(s)ds\big)dx\\ &=:&\LL_2^\lambda (P)(t),\\
\phi(t,0)&=& 2\int_a^\infty \psi(t+x)B(x)\phi(t+x,0)\exp\big(-\lambda x-\int_a^x \psi(t+s)B(s)ds\big)dx\\&=:&\LL_3^\lambda (\phi(\cdot,0))(t).
\end{eqnarray*}
We have defined three linear operators on $L^\infty_{per}(0,T)$.
These are well defined as soon as $\lambda>0$. Moreover, we can see  $\LL_2^\lambda$ and $\LL_3^\lambda$ as operators on the space $C_{per}(0,T)$ of T-periodic continuous functions. We have
\begin{lemma}
Under the assumptions of Theorem~\ref{Th:Existence_Floquet}, for any  $\lambda>0$, $\LL_2^\lambda$ and $\LL_3^\lambda$ are nonnegative, compact linear operators on $C_{per}(0,T)$.
\end{lemma}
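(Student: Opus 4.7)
The plan is to verify in turn nonnegativity, linearity (trivial), and compactness. Nonnegativity is immediate: since $\psi\geq 0$, $B>0$, and the inner exponential factor lies in $[0,1]$, the integrand is nonnegative whenever its input is. The substantive point is compactness, which I would establish via the Ascoli--Arzel\`a theorem applied to the image of the closed unit ball of $C_{\text{per}}(0,T)$.

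For uniform boundedness: using $\psi\leq\|\psi\|_\infty$, $B\leq \|B\|_\infty$, the trivial bound $\exp(-\int_a^x\psi(\cdot)B\,ds)\leq 1$, and the integrability of $e^{-\lambda x}$ on $[a,\infty)$ which is available since $\lambda>0$, one gets directly
\begin{equation*}
\|\LL_j^\lambda(P)\|_\infty\leq \frac{2\|\psi\|_\infty\|B\|_\infty e^{-\lambda a}}{\lambda}\|P\|_\infty,\qquad j\in\{2,3\}.
\end{equation*}
The $T$-periodicity of $\LL_j^\lambda(P)$ in the variable $t$ follows from that of $\psi$ after a straightforward change of variables in the defining integral.

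The heart of the argument is equicontinuity: for $P$ in the unit ball of $C_{\text{per}}(0,T)$ and $|t-t'|<\delta$, the quantity $|\LL_2^\lambda(P)(t)-\LL_2^\lambda(P)(t')|$ must be bounded by a quantity tending to zero as $\delta\to 0$, uniformly in $P$ and $t$. I would split this difference into three pieces by successively replacing each of the factors $\psi(t-x)$, $P(t-x)$ and $E(t,x):=\exp(-\int_a^x\psi(t-x+s)B(s)\,ds)$ evaluated at $t$ by its value at $t'$. The $P$-piece is controlled by the uniform continuity of the periodic continuous function $P$, integrated against the $L^1$ weight $B(x)e^{-\lambda x}$. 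The two remaining pieces reduce, via the elementary inequality $|e^{-A}-e^{-B}|\leq |A-B|$, to weighted integrals of $|\psi(u+\delta)-\psi(u)|$. Since $\psi$ is $T$-periodic and in $L^\infty$, the map $\delta\mapsto \psi(\cdot+\delta)$ is continuous from $\mathbb{R}$ into $L^1(0,T)$ (the classical $L^1$-continuity of translation); decomposing the infinite integrals into periods using the geometric-series bound $\sum_{n\geq 0}e^{-\lambda nT}=(1-e^{-\lambda T})^{-1}$ produces an estimate of the form $C\|\psi(\cdot+\delta)-\psi\|_{L^1(0,T)}$ with $C$ independent of $t$ and of $P$ in the unit ball, and this tends to zero as $\delta\to 0$.

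The anticipated main obstacle is precisely this equicontinuity step: because $\psi$ is only bounded measurable and not continuous, the variation $\psi(t-x)-\psi(t'-x)$ cannot be handled pointwise in $x$, and one has to exploit the convolution-like smoothing effect of integration against the $L^1$ kernel $B(x)e^{-\lambda x}$, combined with the $L^1$-continuity of translation for the periodic function $\psi$. Once equicontinuity is in hand, Ascoli--Arzel\`a yields precompactness of the image of the unit ball, hence compactness of the operator. The argument for $\LL_3^\lambda$ is entirely analogous: it suffices to replace $t-x$ by $t+x$ and $t-x+s$ by $t+s$ throughout; the $L^1$-continuity of translation of $\psi$ applies equally well to positive shifts.
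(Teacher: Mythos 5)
Your high-level plan (nonnegativity is trivial, linearity is trivial, compactness via Arzel\`a--Ascoli on the image of the unit ball, with the equicontinuity coming from $L^1$-continuity of translations) is the right one, and your treatment of the $\psi$-pieces via periodic decomposition and the geometric series $\sum_n e^{-\lambda nT}$ would indeed work. But there is a genuine gap in the central decomposition.

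You split $\LL_2^\lambda(P)(t)-\LL_2^\lambda(P)(t')$ into three pieces by replacing $\psi(t-x)$, then $P(t-x)$, then $E(t,x)$ by their values at $t'$, and you propose to bound the $P$-piece, namely
\begin{equation*}
2\int_a^\infty \psi(t'-x)B(x)\bigl(P(t-x)-P(t'-x)\bigr)\exp\Bigl(-\lambda x-\int_a^x\psi(t-x+s)B(s)\,ds\Bigr)\,dx ,
\end{equation*}
by ``the uniform continuity of the periodic continuous function $P$.'' That controls this term for each fixed $P$, i.e.\ it shows each image $\LL_2^\lambda(P)$ is continuous, but it does \emph{not} give equicontinuity of the family $\{\LL_2^\lambda(P):\|P\|_\infty\le 1\}$, because the modulus of continuity of $P$ is not uniform over the unit ball of $C_{\mathrm{per}}(0,T)$ (take $P_n(t)=\sin(nt)$ restricted to a period). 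The $L^1$-smoothing trick that saves the $\psi$-pieces does not help here either: $\int_0^T |P(u)-P(u+\delta)|\,du$ is only $O(1)$ uniformly over the unit ball, not $o(1)$.

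The paper sidesteps this precisely by performing a change of variables $x\mapsto x-h$ in the integral defining $g(t+h)=\LL_3^\lambda(f)(t+h)$ before subtracting $g(t)$. After this substitution, the argument of $f$ is $t+x$ in both $g(t+h)$ and $g(t)$, so the difference contains no translated copy of $f$ at all: the translation is transferred onto the fixed datum $B$ (as $B(x-h)-B(x)$ and $B(s-h)-B(s)$ inside the exponential), the lower endpoint ($a+h$ vs $a$), and the prefactor $e^{\lambda h}$. All of those are then estimated uniformly in $f$ using $L^1$-continuity of translation of $B$ on $[a,R]$ plus the tail bound from $e^{-\lambda x}$, which is exactly the kind of estimate you wrote for the $\psi$-pieces. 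To repair your argument you would need to make the same change of variables so that the translation never falls on $P$; as written, the $P$-piece cannot be controlled and the equicontinuity claim does not follow.
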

\myproof The non negativity and linearity are  obvious. We next show the compactness. If we fix $\lambda,a>0$ then for $f$ continuous and $T$-periodic with $\|f\|\leq 1$, if we write $g=\LL_3^\lambda(f)$, we have 
\begin{align*}
g(t+h)&=2\int_{a+h}^\infty \!\!\psi(t+x)B(x-h)f(t+x)\exp\big(\lambda (h-x)
-\int_{a+h}^x\psi(t+s)B(s)ds\big)dx\\
&\!\!\!\!\!\!\!\!\!\!\!\!\!\!\!\!
=2\int_{a}^\infty \!\!\psi(t+x)B(x-h)f(t+x)\exp\big(\lambda(h- x)-\int_{a+h}^x\psi(t+s)B(s)ds\big)dx\\
&\!\!\!\!\!\!\!\!\!\!\!
-2\int_{a}^{a+h} \!\!\psi(t+x)B(x-h)f(t+x)\myexp{\lambda (h- x)-\int_{a+h}^x\psi(t+s)B(s)ds}dx \enspace .
\end{align*}
This leads to 
\begin{align*}
g(t+h)\!-\!g(t)&\!\!= -2\!\int_{a}^{a+h}\!\!\!\!\!\!\!\!\!\psi(t+x)B(x-h)f(t+x)\exp\big(\lambda (h- x)-\!\!\int_{a+h}^x\!\!\!\!\!\!\!\!\psi(t+s)B(s-h)ds\big)dx\\
&\!\!\!\!\!\!\!\!\!\!\!\!\!\!\!\!\!\!\!\!\!\!\!\!\!\!\!\!\!\!\!\!\!\!\!
+ 2(e^{\lambda h}-1)\int_{a}^\infty \psi(t+x)B(x-h)f(t+x)\exp\big(-\!\lambda x-\!\int_{a+h}^x\psi(t+s)B(s-h)ds\big)dx\\
&\!\!\!\!\!\!\!\!\!\!\!\!\!\!\!\!\!\!\!\!\!\!\!\!\!\!\!\!\!\!\!\!\!\!\!+ 2\int_{a}^\infty \psi(t+x)(B(x-h)-B(x))f(t+x)\myexp{-\!\lambda x-\!\int_{a+h}^x\psi(t+s)B(s-h)ds}dx\\
&\!\!\!\!\!\!\!\!\!\!\!\!\!\!\!\!\!\!\!\!\!\!\!\!\!\!\!\!\!\!\!\!\!\!\!+ 2\int_{a}^\infty \psi(t+x)B(x)f(t+x)e^{-\lambda x}\bigg( e^{-\int_{a+h}^x\psi(t+s)B(s-h)ds}-e^{-\int_{a}^x\psi(t+s)B(s)ds}\bigg)dx,\\
&= I_1+I_2+I_3+I_4.
\end{align*}
We deal with each $I_i$ separately,
\begin{equation}
\left|I_1\right|\leq 2\|\psi\|\|B\|e^{\lambda h} h,
\end{equation}
\begin{equation}
\left|I_2\right|\leq 2\|\psi\|\|B\|(e^{\lambda h}-1) \frac{e^{-\lambda a}}{\lambda},
\end{equation}
to make $I_3$ small, we make the following remark: for any $R>a$, 
\begin{equation}
\left|I_3\right|\leq 2\|\psi\|\|\tau_h B-B\|_{L^1([a,R])}+4\|\psi\|\|B\|\int_R^\infty e^{-\lambda x}dx \enspace .\label{e-I3}
\end{equation}
Observe that for all $a$ and $R$, 
$\lim_{h\to 0^+}\|\tau_h B-B\|_{L^1([a,R])}=0$ (indeed, this is obvious
if $B$ is continuous, and, by density of continuous functions in $L^1([a,R])$,
the same is true under the present assumptions).
Since $\lambda$ is positive, the second term in~\eqref{e-I3} can be made
arbitrarily small by choosing $R$ large enough, and then, the first
term can be made arbitrarily small by choosing $h$ small enough. It follows
that for all $\epsilon>0$, we have 
%
\[ \left|I_3\right|\leq \varepsilon \qquad \text{for }h \text{ small enough.}
\]
The same method can be applied to $I_4$. Finally we have the equicontinuity of $\LL_3^\lambda(B)$ where $B$ is the unit ball for the supremum norm. Thanks to Arzela-Ascoli theorem, $\LL_3^\lambda$ is compact.\\
We are now in position to apply Krein-Rutman theorem, there exist $U_2,U_3$ nonnegative eigenvectors of $\LL_2^\lambda,\LL^\lambda_3$ associated to their respective spectral radii $\rho_2(\lambda),\rho_3(\lambda)$.
We have now 
\begin{lemma}
\begin{equation}
\rho_i(\lambda),U_i>0,
\end{equation}
\begin{equation}
\LL_1^\lambda(\psi U_2)=\rho_2(\lambda)\psi U_2,
\end{equation}
\begin{equation}
\rho_3=\rho_2.
\end{equation}
\end{lemma}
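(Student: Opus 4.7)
The three claims are handled in turn.

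\textbf{Strict positivity of $\rho_i$ and $U_i$.} The Krein-Rutman theorem, applied to the compact positive operators $\LL_2^\lambda,\LL_3^\lambda$ on $C_{\text{per}}(0,T)$, yields nonnegative eigenvectors $U_2,U_3$ associated to the respective spectral radii. To see $\rho_i>0$, observe that $\LL_i(1)$ is pointwise strictly positive: the hypothesis $\psi\not\equiv 0$, combined with $B>0$, ensures that for each $t$ the integrand defining $\LL_i(1)(t)$ is nonzero on a set of positive measure, and by $T$-periodicity and continuity we have $\LL_i(1)\geq\varepsilon$ pointwise for some $\varepsilon>0$; iterating gives $\LL_i^n(1)\geq\varepsilon^n$, so $\rho_i\geq\varepsilon>0$. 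For strict positivity of $U_3$, suppose $U_3(t_0)=0$ at some $t_0$. The eigenvalue equation and $B>0$ then force $\psi(u)U_3(u)=0$ for a.e.\ $u\geq t_0+a$; by $T$-periodicity, $\psi U_3\equiv 0$ everywhere; substituting back into the eigenvalue equation gives $\rho_3 U_3\equiv 0$, a contradiction. The argument for $U_2>0$ is the mirror image, exploiting the opposite time orientation.

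\textbf{The identity $\LL_1^\lambda(\psi U_2)=\rho_2\psi U_2$.} Introduce the auxiliary operator
\[
A(f)(t):= 2\int_a^\infty B(x)\, f(t-x) \exp\big(-\lambda x - \int_a^x \psi(t-x+s)B(s)\,ds\big)dx,
\]
and let $M_\psi$ denote multiplication by $\psi$. Direct comparison with the definitions gives the factorizations $\LL_1=M_\psi\circ A$ and $\LL_2=A\circ M_\psi$, from which
\[
\LL_1(\psi U_2)=M_\psi A(M_\psi U_2) = M_\psi\LL_2(U_2) = \rho_2\,\psi U_2.
\]

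\textbf{The equality $\rho_3=\rho_2$.} I would compute the formal adjoint of $\LL_2$ with respect to the pairing $\langle f,g\rangle:=\int_0^T f g\,dt$. By Fubini, the substitution $u=t-x$ in the inner integral, and $T$-periodicity (to reset the integration range to $[0,T]$), one obtains $\langle\LL_2 f,g\rangle=\langle\psi f,A^*g\rangle$, where
\[
A^*(g)(u):= 2\int_a^\infty B(x)\, g(u+x) \exp\big(-\lambda x - \int_a^x \psi(u+s)B(s)\,ds\big)dx.
\]
Direct inspection of the definition of $\LL_3$ gives $\LL_3=A^*\circ M_\psi$, so that $\LL_2^*=M_\psi\circ A^*$ while $\LL_3=A^*\circ M_\psi$. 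Since for any bounded operators $B,C$ the products $BC$ and $CB$ share the same nonzero spectrum, $\rho(\LL_2^*)=\rho(M_\psi A^*)=\rho(A^* M_\psi)=\rho(\LL_3)$; combined with the standard fact $\rho(\LL_2)=\rho(\LL_2^*)$, this yields $\rho_2=\rho_3$.

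The main obstacle is the third step, specifically the functional-analytic interpretation of the adjoint: the $\LL_i$ live on $C_{\text{per}}(0,T)$, whose dual is the space of signed measures, so one must either verify that the $A^*$ computed via Fubini genuinely realizes the dual operator (its kernel is bounded, so it does map $C_{\text{per}}$ into itself, and the nonzero spectra of compact operators coincide on $C_{\text{per}}$ and $L^p_{\text{per}}$), or more expeditiously reinterpret $\LL_2$ and $\LL_3$ on the Hilbert space $L^2_{\text{per}}(0,T)$, where they remain compact by essentially the same estimates as in the preceding lemma and where the adjoint is unambiguous. Neither route presents conceptual difficulty, but careful bookkeeping is required.
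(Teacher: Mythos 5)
Your proof is correct, and the first two parts track the paper's reasoning closely (indeed your ordering — establish $\rho_i>0$ by the iterated bound $\LL_i^n(1)\geq\varepsilon^n$ first, then deduce $U_i>0$ from the eigenvalue equation — is cleaner than the paper's, which runs the contradiction argument for positivity of $U_2$ before it has $\rho_2>0$ in hand). The factorization $\LL_1=M_\psi A$, $\LL_2=A M_\psi$ is exactly the "straightforward computation" the paper alludes to.

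Where you genuinely diverge is the third claim, $\rho_2=\rho_3$. The paper pairs the already-constructed positive eigenvectors: it applies the duality identity $\int_0^T\LL_1(f)\,g\,dt=\int_0^T f\,\LL_3(g)\,dt$ to $f=\psi U_2$, $g=U_3$, obtaining
\[
\rho_2\int_0^T\psi U_2U_3\,dt=\int_0^T\LL_1(\psi U_2)\,U_3\,dt=\int_0^T\psi U_2\,\LL_3(U_3)\,dt=\rho_3\int_0^T\psi U_2U_3\,dt,
\]
and concludes since $\int\psi U_2U_3>0$ once $U_2,U_3>0$ are known. You instead deduce $\rho_2=\rho_3$ purely operator-theoretically, from $\LL_2^*=M_\psi A^*$, $\LL_3=A^*M_\psi$, the commutation principle $\sigma(BC)\setminus\{0\}=\sigma(CB)\setminus\{0\}$, and $\rho(T)=\rho(T^*)$. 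Your route is more general — it does not use positivity of the eigenvectors at all and would work whenever the factorization holds — but it pays for that generality with the functional-analytic overhead you flag (interpreting the adjoint on $C_{\text{per}}$, or passing to $L^2_{\text{per}}$ and transporting the spectral radius back). The paper's duality pairing sidesteps all of that: it never leaves the realm of explicit integrals and only needs the Fubini/change-of-variable identity plus strict positivity of the eigenfunctions. Both are valid; the paper's is shorter and self-contained given what has already been proved, while yours isolates $\rho_2=\rho_3$ as a structural fact independent of Krein--Rutman positivity.
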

\myproof The second statement is a straightforward computation, we prove the first by contradiction: if $U_2$ vanishes, then, for some $t$, 
$$2\int_a^\infty \psi(t-x)B(x)U_2(t-x)\myexp{-\lambda x-\int_a^x \psi(t-x+s)B(s)ds}dx=0,$$
as $B>0$, it would mean thanks to $T$-periodicity $\psi B\equiv 0$ which would lead to $U_2\equiv 0$.
Finally, as $U_2$ does not vanish then $\LL_2^\lambda(U_2)>0$ and therefore, $\rho_2(\lambda)>0$. The equality comes from the duality of operators $\LL_1$ and $\LL_3$, we have
\begin{align*}
\rho_2 \int_0^T \psi(t)U_2(t)U_3(t)dt&=\int_0^T \LL_1(\psi U_2).U_3(t) dt =\int_0^T \psi U_2\LL_3(U_3)dt\\&=\rho_3  \int_0^T \psi(t)U_2(t)U_3(t)dt,
\end{align*}
therefore $\rho_2=\rho_3=\rho$.\\
To end the proof, we need to find $\lambda$ such that $\rho(\lambda)=1$. Obviously, $\rho$ is a decreasing function that vanishes at infinity. 
Since
\begin{align*}
 2\int_a^\infty \!\!\!\psi(t+x)B(x) \myexp{-\!\int_a^x\!\!\psi(t+s)B(s)ds}dx&=2 \Big[\myexp{-\!\int_a^x\!\!\psi(t+s)B(s)ds}\Big]^\infty_a\\ &=2,
\end{align*}
$\rho\rightarrow 2$ as $\lambda\rightarrow 0$. Therefore there exists a unique $\lambda$ satisfying $\rho(\lambda)=1$. Up to a renormalization, $\phi$ and $P$ are unique, and therefore so is $N$. This ends the proof of Theorem~\ref{Th:Existence_Floquet}. 

\section*{Acknowledgement}
The authors thank Jean Clairambault and Benoit Perthame for numerous insightful remarks and discussions. Most of this work has been achieved during the PhD of the second author at Laboratoire Jacques Louis Lions (Paris) and INRIA Rocquencourt.
\bibliographystyle{abbrv}

\begin{thebibliography}{10}

\bibitem{ABG96}
M.~Akian, R.~Bapat, and S.~Gaubert.
\newblock Asymptotics of the {P}erron eigenvalue and eigenvector using max
  algebra.
\newblock {\em C. R. Acad. Sci. Paris.}, 327, S\'erie I:927--932, 1998.

\bibitem{Bacaer}
N.~Bacaer and X.~Abdurahman.
\newblock Resonance of the epidemic threshold in a periodic environment.
\newblock {\em Journal of Mathematical Biology}, 57(5):649--673, November 2008.

\bibitem{Lepoutre_mmnp}
J.~Clairambault, S.~Gaubert, and T.~Lepoutre.
\newblock Comparison of {P}erron and {F}loquet eigenvalues in age structured
  cell division cycle models.
\newblock {\em Math. Model. Nat. Phenom.}, 4(3):183--209, 2009.
\newblock Also eprint arXiv:0812.0803.

\bibitem{Lepoutre_MCM}
J.~Clairambault, S.~Gaubert, and T.~Lepoutre.
\newblock Circadian rhythm and cell population growth.
\newblock {\em Mathematical and Computer Modelling}, 53(7-8):1558 -- 1567,
  2011.
\newblock Mathematical Methods and Modelling of Biophysical Phenomena.

\bibitem{CGP}
J.~Clairambault, S.~Gaubert, and B.~Perthame.
\newblock An inequality for the {P}erron and {F}loquet eigenvalues of monotone
  differential systems and age structured equations.
\newblock {\em C. R. Math. Acad. Sci. Paris}, 345(10):549--554, 2007.

\bibitem{CMP}
J.~Clairambault, P.~Michel, and B.~Perthame.
\newblock Circadian rhythm and tumour growth.
\newblock {\em C. R. Acad. Sci.}, 342(1):17--22, 2006.

\bibitem{CMP2}
J.~Clairambault, P.~Michel, and B.~Perthame.
\newblock A mathematical model of the cell cycle and its circadian control.
\newblock In D.~I.A., B.~L., B.~H., de~Vries~G., and H.~H. P., editors, {\em
  Mathematical modeling of Biological Systems}, pages 247--259. Birkh{\"a}user,
  2007.

\bibitem{Dibrov}
B.~F. Dibrov, A.~M. Zhabotinsky, Y.~A. Neyfakh, M.~P. Orlova, and L.~I.
  Churikova.
\newblock Mathematical model of cancer chemotherapy. {P}eriodic schedules of
  phase-specific cytotoxic-agent administration increasing the selectivity of
  therapy.
\newblock {\em Math. Biosci.}, 73(1):1--31, 1985.

\bibitem{Diekmann_periodic}
O.~Diekmann, H.~J. A.~M. Heijmans, and H.~R. Thieme.
\newblock On the stability of the cell-size distribution. {II}. {T}ime-periodic
  developmental rates.
\newblock {\em Comput. Math. Appl. Part A}, 12(4-5):491--512, 1986.
\newblock Hyperbolic partial differential equations, III.

\bibitem{Doumic}
M.~Doumic.
\newblock Analysis of a population model structured by the cells molecular
  content.
\newblock {\em Mathematical Modelling of Natural Phenomena}, 2(3):121--152,
  2007.

\bibitem{Goldbeterbook}
A.~Goldbeter.
\newblock {\em Biochemical oscillations and cellular rhythms}.
\newblock Cambridge University Press, 1997.

\bibitem{McKendrick}
W.~O. Kermack and A.~McKendrick.
\newblock {A Contribution to the Mathematical Theory of Epidemics}.
\newblock {\em Proceedings of the Royal Society of London. Series A, Containing
  Papers of a Mathematical and Physical Character}, 115(772):700--721, Aug.
  1927.

\bibitem{Levi}
F.~Levi and U.~Schibler.
\newblock Circadian rhythms: mechanisms and therapeutic implications.
\newblock {\em Annu Rev Pharmacol Toxicol}, 47:593--628, 2007.

\bibitem{Magal_periodic}
P.~Magal.
\newblock Compact attractors for time-periodic age-structured population
  models.
\newblock {\em Electron. J. Differential Equations}, pages No. 65, 35 pp.
  (electronic), 2001.

\bibitem{marshall79}
A.~Marshall and I.~Olkin.
\newblock {\em Inequalities: Theory of majorization and its applications}.
\newblock Academic Press, New York, 1979.

\bibitem{MetzDiekmann}
J.~Metz and O.~Diekmann.
\newblock {\em The dynamics of physiologically structured populations},
  volume~68 of {\em L.N. in biomathematics}.
\newblock Springer, 1986.

\bibitem{MMP}
P.~Michel, S.~Mischler, and B.~Perthame.
\newblock General relative entropy inequality: an illustration on growth
  models.
\newblock {\em J. Math. Pures et Appl.}, 84(9):1235--1260, May 11 2005.

\bibitem{PerthameBook}
B.~Perthame.
\newblock {\em Transport equations in biology}.
\newblock Birkh{\"a}user, 2007.

\bibitem{Thieme_84}
H.~R. Thieme.
\newblock Renewal theorems for linear periodic {V}olterra integral equations.
\newblock {\em J. Integral Equations}, 7(3):253--277, 1984.

\bibitem{thieullen}
P.~Thieullen and E.~Garibaldi.
\newblock Description of some ground states by puiseux techniques.
\newblock {\em J. Stat. Phys.}, 146:125--180, 2012.

\bibitem{Webb_book}
G.~F. Webb.
\newblock {\em Theory of nonlinear age-dependent population dynamics},
  volume~89 of {\em Monographs and Textbooks in Pure and Applied Mathematics}.
\newblock Marcel Dekker Inc., New York, 1985.

\end{thebibliography}

\end{document}